\documentclass[12pt]{article}
\usepackage{amsthm,amssymb, amsmath, amscd, amsfonts,graphicx, latexsym,verbatim}
\usepackage{cite, psfrag,color}
\usepackage{titlesec}
\usepackage{epstopdf}
\usepackage{fancybox}
\usepackage{float}
\usepackage[colorlinks,citecolor=blue]{hyperref}
\usepackage[numbers,square]{natbib}
\usepackage[font=normalsize,labelsep=none]{caption}
\usepackage{diagbox}
\usepackage{changepage}
\usepackage{enumerate}
\usepackage{arydshln}
\usepackage{bbm}
\usepackage{setspace}
\usepackage{stmaryrd} 

\titleformat{\section}
{\raggedright\Large\bfseries}
{\thesection .\quad}
{0pt}
{}
\titleformat{\subsection}
{\raggedright\large\bfseries}
{\thesubsection .\quad}
{0pt}
{}

 \newtheorem{theorem}{Theorem}[section]
 \newtheorem{definition}{Definition}[section]
 \newtheorem{lemma}{Lemma}[section]
 \newtheorem{corollary}{Corollary}[section]
 \newtheorem{example}{Example}[section]
 \newtheorem{remark}{Remark}[section]

 \newtheorem{proposition}{Proposition}[section]

\makeatletter

\makeatother

\setcounter{section}{1}

\setlength{\topmargin}{-1.8cm}
\setlength{\oddsidemargin}{0.0cm}
\setlength{\textwidth}{16.8cm}
\setlength{\textheight}{24.3cm}

\setcounter{section}{0}

\newlength{\boxedparwidth}
\setlength{\boxedparwidth}{1.\textwidth}
  {\begin{center} \begin{tabular}{|@{\hspace{.315in}}c@{\hspace{.15in}}|}
                  \hline \\ \begin{minipage}[t]{\boxedparwidth}
                  \setlength{\parindent}{.25in}}%
  {\end{minipage} \\ \\ \hline \end{tabular} \end{center}}
\captionsetup[figure]{labelfont={bf},labelformat={default},labelsep=period,name={Fig.},font={footnotesize}}
\captionsetup[table]{labelfont={bf},labelformat={default},labelsep=period,name={Table},font={footnotesize}}

\parskip 6pt
\begin{document}
\begin{center}
{\LARGE{\bf{Mahonian and Euler-Mahonian statistics for set partitions  \vskip 1mm}}}
\end{center}
\vspace{2.0mm}
\begin{center}
\text{Shao-Hua Liu}\\
   \vskip 2.5mm
{School of Statistics and Mathematics\\
Guangdong University of Finance and Economics\\
Guangzhou, China\\

   \vskip 2.5mm
Email: liushaohua@gdufe.edu.cn}
\end{center}
 \title{}

 \vskip 4mm

\noindent {\bf Abstract.}
A partition of the set $[n]:=\{1,2,\ldots,n\}$ is a collection of disjoint nonempty subsets (or
blocks) of $[n]$, whose union is $[n]$.
In this paper we consider the following rarely used representation for set partitions:
given a partition of $[n]$ with blocks $B_{1},B_{2},\ldots,B_{m}$
satisfying $\max B_{1}<\max B_{2}<\cdots<\max B_{m}$,
we represent it  by a word $w=w_{1}w_{2}\ldots w_{n}$ such that $i\in B_{w_{i}}$, $1\leq i\leq n$.
We prove that the Mahonian statistics
{\footnotesize INV}, {\footnotesize MAJ}, {\footnotesize MAJ}$_{d}$, $r$-{\footnotesize MAJ}, {\footnotesize Z}, {\footnotesize DEN}, {\footnotesize MAK}, {\footnotesize MAD} are all equidistributed on set partitions via this representation,
and that the Euler-Mahonian statistics
$(\text{des}, \text{\footnotesize MAJ})$,
$(\text{mstc}, \text{\footnotesize INV})$,
$(\text{exc}, \text{\footnotesize DEN})$,
$(\text{des}, \text{\footnotesize MAK})$
are all equidistributed on set partitions via this representation.

   \vskip 2mm
\noindent {\bf Keywords}: Set partition, Inversion, Major index, Mahonian statistic, $q$-Stirling number

   \vskip 2mm
\noindent {\bf MSC2010}: 05A18, 05A05, 05A19
   \vskip 2mm

\section{Introduction}
\subsection{Mahonian and Euler-Mahonian statistics}
We use the notation $M=\{1^{k_{1}},2^{k_{2}},\ldots,m^{k_{m}}\}$
for the multiset $M$ consisting of $k_{i}$ copies of $i$, for all $i\in[m]:=\{1,2,\ldots,m\}$.
Let $n=k_{1}+k_{2}+\cdots+k_{m}$, we write $|M|=n$ and $\langle M\rangle=m$.
Throughout this paper, we always assume that $M=\{1^{k_{1}},2^{k_{2}},\ldots,m^{k_{m}}\}$
with $k_{i}\geq1$ for all $i\in[m]$ and that $|M|=n$.
Let $\mathfrak{S}_{M}$ be the set of multipermutations  of multiset $M$.

Given $w=w_{1}w_{2}\ldots w_{n}\in\mathfrak{S}_{M}$,
a pair $(i,j)$ is called an \emph{inversion} of $w$ if $i<j$ and $w_{i}>w_{j}$.
Let $\text{\footnotesize INV}(w)$ be the number of inversions of $w$.
An index $i$, $1\leq i\leq n-1$, is called a \emph{descent} of $w$ if $w_{i}>w_{i+1}$.
Let $\text{Des}(w)$ be the set of all descents of $w$,
and let $\text{des}(w):=|\text{Des}(w)|$,
where $|\cdot|$ indicates cardinality.
Define the \emph{major index} of $w$, denoted $\text{\footnotesize MAJ}(w)$,
to be
\begin{align*}
\text{\footnotesize{MAJ}}(w)=\sum_{i\in {\text{Des}}(w)}i.
\end{align*}
MacMahon's equidistribution theorem asserts that for any multiset $M$,
\begin{align*}
\sum_{w\in\mathfrak{S}_{M}}q^{\text{\tiny INV}(w)}=\sum_{w\in\mathfrak{S}_{M}}q^{\text{\tiny MAJ}(w)}.
\end{align*}
In other words, the statistics {\footnotesize INV} and {\footnotesize MAJ} are equidistributed on $\mathfrak{S}_{M}$.
This famous result was obtained by MacMahon \cite{MacMahon-1916} in 1916.
It was not until 1968 that a famous bijective proof was found by Foata \cite{Foata-1968}.

Any statistic that is equidistributed with des is said to be \emph{Eulerian},
while any statistic equidistributed with \text{\footnotesize MAJ} is said to be \emph{Mahonian}.
A bivariate statistic that is equidistributed with (des, \text{\footnotesize MAJ}) is said to be \emph{Euler-Mahonian}.
Following \cite{Clarke-1997}, we will write Mahonian statistics with capital letters.
Tables \ref{Table-1} and \ref{Table-2} list the most common Mahonian and Euler-Mahonian statistics on words in the literature  respectively.
\begin{table}[t]
\centering
\begin{tabular}{lll}
Name& Reference & Year \\
\hline
{\footnotesize INV}&Rodriguez \cite{Rodriguez-1839}&1839\\
{\footnotesize MAJ}&MacMahon \cite{MacMahon-1916}&1916\\
$r$-{\footnotesize MAJ} &Rawlings \cite{Rawlings-1981} (for  permutations)& 1981\\
~ & Rawlings \cite{Rawlings-1981-2} (for words)& 1981  \\
{\footnotesize MAJ}$_{d}$&Kadell \cite{Kadell-1985}&1985\\
{\footnotesize Z}  &Zeilberger-Bressoud \cite{Zeilberger-1985}&1985  \\
{\footnotesize DEN} &Denert \cite {Denert-1990}, Foata-Zeilberger \cite{Foata-1990} (for  permutations)& 1990\\
~ & Han \cite{Han-1994} (for words)& 1994  \\
{\footnotesize MAK}  & Foata-Zeilberger \cite{Foata-1990} (for  permutations) & 1990 \\
~ &Clarke-Steingr\'{\i}msson-Zeng \cite{Clarke-1997} (for words) & 1997 \\
{\footnotesize MAD}  &Clarke-Steingr\'{\i}msson-Zeng \cite{Clarke-1997} &1997  \\
{\footnotesize STAT}  & Babson-Steingr\'{\i}msson \cite{Bjorner-2000} (for  permutations) &  2000 \\
~ & Kitaev-Vajnovszki \cite{Kitaev-2016} (for  words) &  2016 \\
\end{tabular}
\caption{~~Mahonian statistics on words.}\label{Table-1}
\end{table}

\begin{table}[t]
\centering
\begin{tabular}{lll}
Name& Reference & Year \\
\hline
$(\text{des}, \text{\footnotesize MAJ})$ &MacMahon \cite{MacMahon-1916}&1916\\
$(\text{exc}, \text{\footnotesize DEN})$ &Denert \cite {Denert-1990}, Foata-Zeilberger \cite{Foata-1990} (for  permutations)& 1990\\
~ &Han \cite{Han-1994} (for words)& 1994\\
$(\text{des}, \text{\footnotesize MAK})$ &Foata-Zeilberger \cite{Foata-1990} (for  permutations) & 1990 \\
&Clarke-Steingr\'{\i}msson-Zeng \cite{Clarke-1997} (for words) & 1997 \\
$(\text{mstc}, \text{\footnotesize INV})$ & Skandera \cite{Skandera-2001} (for  permutations) &  2001 \\
~ & Carnevale \cite{Carnevale-2017} (for  words) &  2017
\end{tabular}
\caption{~~Euler-Mahonian statistics on words.}\label{Table-2}
\end{table}

There are many research articles devoted to finding MacMahon type results for other combinatorial objects.
For example,
see
\cite{Chen-2010} for $01$-fillings of moon polyominoes,
\cite{Haglund-2006}  for standard Young tableaux,
\cite{Remmel-2015} for ordered set partitions,
\cite{Liu-2021}  for $k$-Stirling permutations.
In this paper we consider set partitions.

\subsection{Set partitions}\label{subsection-set partition}
A \emph{partition of the set} $[n]=\{1,2,\ldots,n\}$ is a collection of disjoint nonempty subsets (or blocks) of $[n]$, whose union is $[n]$.
For example, $\{\{1,3,5,7\},\{2,6\},\{4\},\{8,9\}\}$ is a partition of $[9]$.
We denote by $\Pi_{n}$ the set of all partitions of $[n]$,
and by $\Pi_{n,m}$ the set of all partitions of $[n]$ with exactly $m$ blocks.
There are several well-known representations for set partitions,
and each of them has its aim value and its result.
In the following, we provide three most common representations.

Given a partition of $[n]$,
the graph on the vertex set $[n]$ whose edge set consists of the arcs connecting the elements of each block in numerical order  is called the \emph{standard representation}.
For example, the standard representation of  $\{\{1,3,5,7\},\{2,6\},\{4\},\{8,9\}\}$ has the arc set $\{(1,3),(3,5),(5,7),(2,6),(8,9)\}$;
see Fig. \ref{Fig_exm_standard_representation}.
Using the standard representation,
Chen, Gessel, Yan and Yang \cite{Chen-2008} introduced a major index statistic and then obtained a MacMahon type result for set partitions.

\begin{figure}[t]
\centering
\includegraphics[width=9cm]{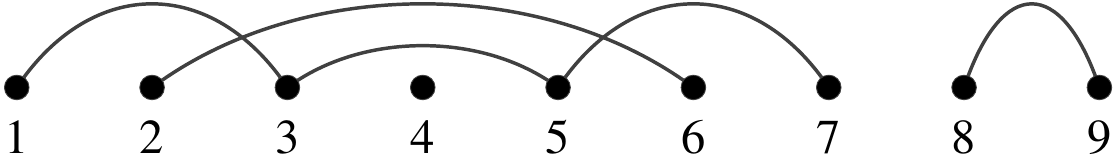}
\caption{\quad The standard representation of $\{\{1,3,5,7\},\{2,6\},\{4\},\{8,9\}\}$.
}\label{Fig_exm_standard_representation}
\end{figure}

Given a partition of $[n]$,
write it as $B_{1}/B_{2}/\cdots/B_{m}$,
where $B_{1},B_{2},\ldots,B_{m}$ are the blocks and satisfy the
following order property
$$\min B_{1}<\min B_{2}<\cdots<\min B_{m}.$$
This representation is called  the \emph{block representation}.
For example, the block representation of $\{\{4\},\{2,6\},\{8,9\},\{1,3,5,7\}\}$ is $\{1,3,5,7\}/\{2,6\}/\{4\}/\{8,9\}$.
From the historical point of view, the set partitions are defined by block representation.
Using the block representation,
Sagan \cite{Sagan-1991} introduced a major index statistic and then obtained a  MacMahon type result for set partitions.

Given a partition $w$ of $[n]$ with blocks $B_{1},B_{2},\cdots,B_{m}$,
where $$\min B_{1}<\min B_{2}<\cdots<\min B_{m},$$
we write $w=w_{1}w_{2}\ldots w_{n}$,
where $w_{i}$ is the block number in which $i$ appears,
that is,  $i\in B_{w_{i}}$.
This representation is called the \emph{canonical representation}.
For example, the canonical representation of $\{\{1,3,5,7\},\{2,6\},\{4\},\{8,9\}\}$ is $121312144$.
The canonical representation is one of the most popular representations in the theory of set partitions.
Many statistics,
especially the pattern-based statistics,
on set partitions are defined via canonical representation,
see, e.g., the book \cite{Mansour-2013} for more information.

In this paper, we consider the following rarely used representation for set partitions:
given a partition $w$ of $[n]$ with blocks $B_{1},B_{2},\ldots,B_{m}$,
where $$\max B_{1}<\max B_{2}<\cdots<\max B_{m},$$
we write $w=w_{1}w_{2}\ldots w_{n}$,
where $w_{i}$ is the block number in which $i$ appears,
that is,  $i\in B_{w_{i}}$.
Note that the only difference between this representation and canonical representation is
that here we use the ordering according to the \emph{maximal} element of the blocks.
This representation was also considered by Johnson \cite{Johnson-1996-1,Johnson-1996-2} and Deodhar-Srinivasan \cite{Deodhar-2003}.
In this paper we prove that the Mahonian statistics on words in Table \ref{Table-1}, except for {\footnotesize STAT}, are all equidistributed on set partitions via this representation,
and that the Euler-Mahonian statistics on words in Table \ref{Table-2} are all equidistributed on set partitions via this representation.
For this reason,
we call this representation the \emph{Mahonian representation} for set partitions.
We will state formally the main results of this paper in the next section,
and will prove them in the remaining sections.

\section{Main results}
\subsection{Definitions and the main results}
Given $w=w_{1}w_{2}\ldots w_{n}\in\mathfrak{S}_{M}$,
define the \emph{tail permutation of $w$} to be the subword $w_{t_{1}}w_{t_{2}}\ldots w_{t_{m}}$ of $w$, where $t_{1}<t_{2}<\ldots<t_{m}$, so that $w_{t_{i}}$ is the last (rightmost) occurrence of that letter for any $i\in[m]$.
For example, the tail permutation of $331322112441$ is $3241$.
Clearly the tail permutation of $w\in\mathfrak{S}_{M}$ is a permutation of $[m]$ (recall that we always assume that $\langle M\rangle=m$).

Let $\mathfrak{S}_{M}^{\tau}$ be the set of words in $\mathfrak{S}_{M}$ with tail permutation $\tau$.
In particular, let $\mathcal{P}_{M}$ be the set of words in $\mathfrak{S}_{M}$ with increasing  tail permutation, that is, $\mathcal{P}_{M}=\mathfrak{S}_{M}^{\tau}$, where $\tau=12\cdots m$.
Thus, $\mathcal{P}_{M}$ is the set of words in $\mathfrak{S}_{M}$
in which the last occurrences of $1,2,\ldots,m$ occur in that order.
For example, let $M=\{1^{2},2^{2}\}$, then
$\mathfrak{S}_{M}=\{1122,1212,1221,2112,2121,2211\}$,
$\mathfrak{S}_{M}^{21}=\{1221,2121,2211\}$, and
$\mathcal{P}_{M}=\mathfrak{S}_{M}^{12}=\{1122,1212,2112\}$.

Let $w$ be a partition of $[n]$ with blocks $B_{1},B_{2},\ldots,B_{m}$,
where
$$\max B_{1}<\max B_{2}<\cdots<\max B_{m},$$
if $|B_{i}|=k_{i}$, $1\leq i\leq m$, we say that $w$ is of \emph{type} $(k_{1},k_{2},\ldots,k_{m})$.
It is obvious that $\mathcal{P}_{M}$ is the set of the partitions of $[n]$ of type $(k_{1},k_{2},\ldots,k_{m})$ via Mahonian representation.
Throughout this paper we always use the Mahonian representation to represent a set partition,
that is, we always think of a set partition as an element in $\mathcal{P}_{M}$ for some $M$.
Then
\begin{align*}
\Pi_{n}=\bigcup_{|M|=n}\mathcal{P}_{M} \quad \text{and} \quad \Pi_{n,m}=\bigcup_{|M|=n,\langle M\rangle=m}\mathcal{P}_{M}.
\end{align*}
The main results of this paper can be summarized as the following three theorems.
\begin{theorem}\label{Thm-MacMahon-set-partition}
Let $M=\{1^{k_{1}},2^{k_{2}},\ldots,m^{k_{m}}\}$ with $k_{i}\geq1$ for all $i\in[m]$,
then
\begin{align}\label{Main-MacMahon-eq-1}
\begin{split}
&\sum_{w\in\mathcal{P}_{M}}q^{\emph{\tiny INV}(w)}~~=
\sum_{w\in\mathcal{P}_{M}}q^{\emph{\tiny MAJ}(w)}=\sum_{w\in\mathcal{P}_{M}}q^{\emph{\tiny MAJ}_{d}(w)}=
\sum_{w\in\mathcal{P}_{M}}q^{\emph{\tiny Z}(w)}\\=
&\sum_{w\in\mathcal{P}_{M}}q^{r\emph{\tiny -MAJ}(w)}
=\sum_{w\in\mathcal{P}_{M}}q^{\emph{\tiny DEN}(w)}=
\sum_{w\in\mathcal{P}_{M}}q^{\emph{\tiny MAK}(w)}=
\sum_{w\in\mathcal{P}_{M}}q^{\emph{\tiny MAD}(w)}.
\end{split}
\end{align}
\end{theorem}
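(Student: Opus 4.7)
The plan is to derive the eight equidistributions on $\mathcal{P}_M$ from the known equidistributions of the same statistics on the full multiset-permutation set $\mathfrak{S}_M$. Since $\mathcal{P}_M=\mathfrak{S}_M^{12\cdots m}$, the key observation is that a witnessing bijection $\phi:\mathfrak{S}_M\to\mathfrak{S}_M$ matching two Mahonian statistics will automatically restrict to a bijection on $\mathcal{P}_M$ provided $\phi$ preserves the tail permutation of every word. My strategy is therefore to choose, for each pair of statistics appearing in the theorem, a witnessing bijection on $\mathfrak{S}_M$ with this additional tail-preserving property.

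I would begin by anchoring the chain at the pair $(\mathrm{INV},\mathrm{MAJ})$ via Foata's second fundamental transformation $\varphi$. Because $\varphi$ is built iteratively by appending a letter and performing cyclic shifts on the current prefix driven by the newly appended letter, one can verify that at every stage the position of the rightmost occurrence of each letter is preserved; hence $\varphi$ stabilizes $\mathfrak{S}_M^\tau$ for every $\tau$, and in particular $\mathcal{P}_M$. Next, for each of $\mathrm{MAJ}_d$, $r\text{-}\mathrm{MAJ}$, $\mathrm{Z}$, $\mathrm{DEN}$, $\mathrm{MAK}$, $\mathrm{MAD}$, I would invoke the known bijective proofs of their equidistribution with $\mathrm{MAJ}$ (or $\mathrm{INV}$) on words due to Kadell, Rawlings, Zeilberger--Bressoud, Han, and Clarke--Steingr\'{\i}msson--Zeng, and check tail-permutation invariance case by case. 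Most of these bijections are described by local moves that respect the relative order of equal letters, so each verification should be short.

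The main obstacle I foresee is $\mathrm{DEN}$ (with its word-level formulation by Han), whose construction uses a global coding that is not manifestly tail-preserving; there, a direct verification may require reformulating the bijection as a position-by-position recursive construction. As a uniform fallback that sidesteps all such case analysis, I would instead prove \eqref{Main-MacMahon-eq-1} by establishing a common recursion on $(k_1,\ldots,k_m)$: the last occurrence of $m$ is forced to lie at position $n$ for every $w\in\mathcal{P}_M$, so decomposing by the positions of the remaining $k_m-1$ copies of $m$ reduces $\mathcal{P}_M$ to $\mathcal{P}_{M'}$ for $M'=\{1^{k_1},\ldots,(m-1)^{k_{m-1}}\}$ together with a choice of $k_m-1$ positions among $n-1$. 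One would then verify that each of the eight Mahonian statistics on $\mathcal{P}_M$ obeys the same induction, the unique solution being a $q$-Stirling-type polynomial, which yields \eqref{Main-MacMahon-eq-1}.
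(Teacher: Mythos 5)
Your primary strategy---restricting each classical witnessing bijection to $\mathcal{P}_{M}$ by showing it preserves the tail permutation---is exactly the route the paper takes: Foata's $F$ for ({\footnotesize INV},\,{\footnotesize MAJ}), Liang/Kadell's $F_{d}$ for {\footnotesize MAJ}$_{d}$, Han's $H_{\text{\tiny Z}}$ for {\footnotesize Z}, Rawlings' bijection for $r$-{\footnotesize MAJ}, Han's $H_{\text{\tiny DEN}}$ for {\footnotesize DEN}, and the Clarke--Steingr\'{\i}msson--Zeng bijection $\Phi$ for {\footnotesize MAK} and {\footnotesize MAD}. But two of your claims are wrong or seriously underestimated. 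First, Foata's bijection does \emph{not} stabilize $\mathfrak{S}_{M}^{\tau}$ for every $\tau$: already $F(132)=312$, so the (non-consecutive) tail permutation $132$ is destroyed. The correct statement, and the one the paper proves, is invariance for \emph{consecutive} $\tau$; consecutiveness is precisely what makes the argument work, since it forces all currently-recorded last occurrences to be simultaneously greater than, or simultaneously less than, the newly appended letter, hence all jumping or all fixed under the operator $J_{x}$. Relatedly, your assertion that ``the position of the rightmost occurrence of each letter is preserved'' is literally false (e.g.\ $F(211323)=312123$ moves the last $1$ from position $3$ to position $4$); only the relative order of the last occurrences is preserved. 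Second, the verifications are not uniformly ``short'': for $H_{\text{\tiny Z}}$ the paper must prove a strictly stronger statement by induction (tracking $\mathrm{Last}_{A_{r}}$ for an entire chain of sets $A_{r}$ attached to a consecutive permutation), supported by two technical lemmas analyzing how the exchange operators $\theta_{i}$ move last occurrences; this is the bulk of the paper and nothing about the map being built from ``local moves'' makes it routine.

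Your fallback recursion is not a safe escape hatch either. The set-theoretic decomposition of $\mathcal{P}_{M}$ by the positions of the non-final copies of $m$ is fine, and for {\footnotesize INV} it does yield the $q$-binomial recursion defining Johnson's $q$-Stirling numbers. But for {\footnotesize Z}, {\footnotesize DEN}, {\footnotesize MAK} and {\footnotesize MAD} there is no evident insertion lemma asserting that inserting copies of the largest letter changes the statistic by an amount whose distribution is a $q$-binomial coefficient independent of the base word; proving such a lemma for, say, the embracing-number-based statistics {\footnotesize MAK} and {\footnotesize MAD} is at least as hard as the bijective verifications you hoped to sidestep. As written, your proposal is a correct plan whose genuinely difficult steps are all deferred, and one of its stated intermediate claims (invariance of $\mathfrak{S}_{M}^{\tau}$ under $F$ for arbitrary $\tau$) is false.
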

Note that (\ref{Main-MacMahon-eq-1}) shows that the Mahonian statistics on words in Table \ref{Table-1}, except for {\footnotesize STAT}, are all equidistributed on set partitions of given type via Mahonian representation.
The following theorem shows that the Euler-Mahonian statistics on words in Table \ref{Table-2} are all equidistributed on set partitions of given type via Mahonian representation.

\begin{theorem}\label{Thm-Euler-MacMahon-set-partition}
Let $M=\{1^{k_{1}},2^{k_{2}},\ldots,m^{k_{m}}\}$ with $k_{i}\geq1$ for all $i\in[m]$,
then
\begin{align*}
\begin{split}
\sum_{w\in\mathcal{P}_{M}}t^{\emph{des}(w)}q^{\emph{\tiny MAJ}(w)}=
\sum_{w\in\mathcal{P}_{M}}t^{\emph{mstc}(w)}q^{\emph{\tiny INV}(w)}=
\sum_{w\in\mathcal{P}_{M}}t^{\emph{exc}(w)}q^{\emph{\tiny DEN}(w)}=
\sum_{w\in\mathcal{P}_{M}}t^{\emph{des}(w)}q^{\emph{\tiny MAK}(w)}.
\end{split}
\end{align*}
\end{theorem}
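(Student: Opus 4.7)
The plan is to prove Theorem~2.2 by constructing a chain of bijections on $\mathcal{P}_M$ that successively interchange the four Euler-Mahonian statistics. Since each of the pairwise equidistributions $(\text{des},\text{MAJ}) \sim (\text{des},\text{MAK}) \sim (\text{exc},\text{DEN}) \sim (\text{mstc},\text{INV})$ is already known on the ambient word space $\mathfrak{S}_M$—via the bijections of Clarke-Steingr\'{\i}msson-Zeng, of Foata-Zeilberger and Han, and of Carnevale (extending Skandera), respectively—the task is to adapt those word-level maps so they preserve the defining feature of $\mathcal{P}_M = \mathfrak{S}_M^{12\cdots m}$, namely that the tail permutation is the identity.

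Concretely, I would first take the Clarke-Steingr\'{\i}msson-Zeng $\ell$-labelling bijection, which on $\mathfrak{S}_M$ preserves $\text{Des}$ and converts $\text{MAJ}$ into $\text{MAK}$, and check that its local transformation rules do not disturb the rightmost occurrence of any letter, yielding $\phi_1\colon \mathcal{P}_M \to \mathcal{P}_M$. Next, the Foata-Zeilberger insertion bijection for $(\text{des},\text{MAK}) \leftrightarrow (\text{exc},\text{DEN})$ should be reworked so that letters are inserted in an order compatible with the target tail permutation, producing $\phi_2\colon \mathcal{P}_M \to \mathcal{P}_M$. Finally, an analogous adaptation of Carnevale's bijection, which expresses mstc through weighted inversion-type patterns, should deliver $\phi_3\colon \mathcal{P}_M \to \mathcal{P}_M$ exchanging $(\text{des},\text{MAJ})$ with $(\text{mstc},\text{INV})$. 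Composing $\phi_3 \circ \phi_2 \circ \phi_1$ then chains the four equidistributions to give Theorem~2.2.

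The main obstacle is exactly the tail-permutation constraint. Generic word bijections may rearrange the rightmost occurrences of the block labels $1,2,\ldots,m$, so a pure restriction argument is not available. In particular the Foata-Zeilberger map, whose recursive construction is sensitive to the order in which descents are processed, is likely to require the most delicate modification—possibly by processing letters right-to-left or by bookkeeping the eventual last position of each label as an auxiliary datum—in order to guarantee that the last occurrences of $1,\ldots,m$ emerge in increasing order. As a contingency, should the direct adaptation become unwieldy for some pair, one can fall back on a generating-function argument: using Theorem~2.1 to supply a common $q$-analogue for the marginals, show that each of the four bivariate generating functions on $\mathcal{P}_M$ satisfies the same recursion obtained by peeling off the last block $B_m$, and close the argument by induction on $m$.
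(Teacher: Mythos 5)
Your overall strategy coincides with the paper's: establish the theorem by chaining known word-level bijections that exchange the four bivariate statistics and verifying that each one maps $\mathcal{P}_M$ to $\mathcal{P}_M$. However, as written your proposal is a plan rather than a proof: every one of the tail-permutation verifications --- which constitute essentially the entire mathematical content of the paper's argument --- is deferred with phrases like ``check that its local transformation rules do not disturb the rightmost occurrence'' or ``should be reworked so that letters are inserted in an order compatible with the target tail permutation.'' You yourself identify the tail-permutation constraint as the main obstacle and observe that a pure restriction argument is not available, but you do not resolve it for any of the three maps. The paper's resolution is nontrivial in each case: for $(\text{exc},\text{DEN})\leftrightarrow(\text{des},\text{MAJ})$ it uses Han's bijection $H_{\text{\tiny DEN}}$ \emph{unmodified} and proves (Proposition \ref{Prop-Han-closed-on-M}) via an analysis of the operators $T_{i}$ and cyclic intervals that the first transposition that could break the increasing tail permutation is forced to act trivially; for $(\text{des},\text{MAK},\text{MAD})\leftrightarrow(\text{exc},\text{DEN},\text{INV})$ it analyzes the biword $\binom{f\,g}{f'\,g'}$ in the Clarke--Steingr\'{\i}msson--Zeng map $\Phi$ and shows the last occurrences stay in order (Proposition \ref{Prop-Clarke-closed-on-M}); and for $(\text{mstc},\text{INV})\leftrightarrow(\text{des},\text{MAJ})$ it does not adapt Carnevale's argument at all but instead builds a new map $\Psi_{M}=\text{istd}_{M}\circ\Psi\circ\text{std}$ from Carlitz's insertion bijection, which requires Lemmas \ref{Prop-Liu-bi-statistic}--\ref{lemma Psi permutation} (in particular that $\text{istd}_{M}$ applied after $\Psi$ does not change the descent set) before the tail-permutation invariance (Proposition \ref{Prop-Euler-inv}) even makes sense.

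A further concern is that your plan proposes to \emph{modify} the Foata--Zeilberger and Carnevale constructions (e.g.\ ``processing letters right-to-left,'' ``bookkeeping the eventual last position of each label''). Any such modification would oblige you to re-prove that the altered map still exchanges the relevant pair of statistics, which is not addressed. The paper's point is precisely that the existing bijections (Han's and Clarke--Steingr\'{\i}msson--Zeng's) already preserve $\mathcal{P}_M$ without alteration, and that the one genuinely new map needed ($\Psi_M$) can be assembled from Carlitz's classical bijection. Your fallback generating-function argument (peeling off the block $B_m$ and inducting on $m$) is likewise only sketched; it is not clear that all four bivariate distributions on $\mathcal{P}_M$ satisfy a common recursion of that form, and no such recursion is exhibited. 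So while the architecture of your proposal matches the paper, the proof itself is missing.
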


A permutation $\tau=\tau_{1}\tau_{2}\ldots\tau_{m}$ of $[m]$ is said to be \emph{consecutive} if $\{\tau_{1},\tau_{2},\ldots,\tau_{i}\}$ forms a set of consecutive numbers for all $i\in[m]$.
For example, $\tau=54362718$ is consecutive,
whereas $\tau=54236718$ is not.
In particular, the increasing permutation $\tau=12\ldots m$ is consecutive.
Throughout this paper we always assume that $\tau$ is a consecutive permutation.
For the  statistics {\footnotesize INV}, {\footnotesize MAJ}, {\footnotesize MAJ}$_{d}$, {\footnotesize Z},
we prove the following  more general result.
\begin{theorem}\label{Thm-inv-maj-set-partition-general}
Let $M=\{1^{k_{1}},2^{k_{2}},\ldots,m^{k_{m}}\}$ with $k_{i}\geq1$ for all $i\in[m]$,
and let $\tau$ be a consecutive permutation of $[m]$,
then
\begin{align}\label{Main-general-eq-1}
\sum_{w\in\mathfrak{S}_{M}^{\tau}}q^{\emph{\tiny INV}(w)}=
\sum_{w\in\mathfrak{S}_{M}^{\tau}}q^{\emph{\tiny MAJ}(w)}=
\sum_{w\in\mathfrak{S}_{M}^{\tau}}q^{\emph{\tiny MAJ}_{d}(w)}=
\sum_{w\in\mathfrak{S}_{M}^{\tau}}q^{\emph{\tiny Z}(w)}.
\end{align}
\end{theorem}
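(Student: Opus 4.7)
The plan is to prove Theorem~\ref{Thm-inv-maj-set-partition-general} by induction on $m$, the number of distinct letters in $M$. The base case $m=1$ is immediate, since $\mathfrak{S}_M^\tau$ is the singleton $\{1^{k_{1}}\}$, on which each of the four statistics vanishes.

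For the inductive step the consecutive-permutation hypothesis forces $\tau_m\in\{1,m\}$: the set $\{\tau_{1},\ldots,\tau_{m-1}\}$ is a consecutive subset of $[m]$ of size $m-1$, hence equal to $[m-1]$ or to $\{2,\ldots,m\}$. The cleanest subcase is $\tau_m=m$ together with $k_m=1$. Then every $w\in\mathfrak{S}_M^\tau$ has the form $w=w'\,m$ with $w'\in\mathfrak{S}_{M'}^{\tau'}$, where $M'=\{1^{k_{1}},\ldots,(m-1)^{k_{m-1}}\}$ and $\tau'=\tau_{1}\cdots\tau_{m-1}$ is consecutive on $[m-1]$. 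Appending the global maximum creates no new inversion, no descent at position $n-1$, and no new contribution at position $n-1$ to $\mathrm{MAJ}_d$ or $\mathrm{Z}$, so all four statistics agree on $w$ and $w'$; the inductive hypothesis then delivers~(\ref{Main-general-eq-1}).

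The real work lies in the remaining subcases: $\tau_m=m$ with $k_m\ge 2$, and the mirror case $\tau_m=1$ (where appending the global minimum produces a controlled uniform shift in each of the four statistics when $k_{1}=1$, but a genuine difficulty when $k_{1}\ge 2$). My intended route is bijective: show that Foata's second fundamental transformation $\Phi$, which sends MAJ to INV on $\mathfrak{S}_M$ and is known to fix the rightmost letter, in fact preserves the entire tail permutation whenever $\tau$ is consecutive. The key verification is that each local rewriting step of $\Phi$ never interchanges two last-occurrence positions, provided the set of distinct letters already processed forms an interval---precisely what consecutiveness of $\tau$ guarantees. A parallel analysis would apply to the Kadell bijection witnessing $\mathrm{INV}=\mathrm{MAJ}_d$ and to the Zeilberger--Bressoud bijection witnessing $\mathrm{INV}=\mathrm{Z}$.

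The main obstacle is this tail-preservation verification for $\Phi$ (and its analogues) in the presence of repeated extreme letters, since it requires a non-obvious compatibility between the recursive rule defining each bijection and the global order of last-occurrence positions. Should the bijective route prove too delicate, a fallback is to place the letters $\tau_m,\tau_{m-1},\ldots,\tau_{1}$ sequentially and, at each step, track the joint effect on each statistic to extract an explicit $q$-product formula for each of the four generating sums in~(\ref{Main-general-eq-1}); one then checks that all four formulas coincide.
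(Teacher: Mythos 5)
Your high-level strategy is the same as the paper's: exhibit, for each pair of statistics, a known bijection realizing the equidistribution and verify that it preserves the consecutive tail permutation, i.e.\ maps $\mathfrak{S}_{M}^{\tau}$ to itself. For the pair $(\text{\footnotesize MAJ},\text{\footnotesize INV})$ your sketch of why Foata's transformation works is essentially the paper's argument (Proposition \ref{Prop-Foata-closed-on-M}): at each step the set of letters whose last occurrence has already been placed is $\{\tau_{1},\ldots,\tau_{s}\}$, a consecutive set not containing the incoming letter $w_{i+1}$, so all these letters lie on one side of $w_{i+1}$ and are therefore either all jumping or all fixed under $J_{w_{i+1}}$, which keeps their relative order and keeps each as the last occurrence of its value. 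The same modification handles Liang's/Kadell's bijection for $\text{\footnotesize MAJ}_{d}$ (Proposition \ref{Prop-Foata-d-closed-on-M}). The opening induction on $m$ and the subcase $\tau_{m}=m$, $k_{m}=1$ are correct but do essentially no work, since the bijective claims, once established, prove the theorem directly without any induction on $m$.

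The genuine gap is the statistic {\footnotesize Z}. You assert that ``a parallel analysis would apply to \ldots the Zeilberger--Bressoud bijection witnessing $\text{\footnotesize INV}=\text{\footnotesize Z}$,'' but Zeilberger and Bressoud's proof is by induction, not a bijection; the relevant bijection is Han's $H_{\text{\tiny Z}}$ (or Greene's), which sends {\footnotesize MAJ} to {\footnotesize Z} and is built from the cyclic relabelings $C^{x}$, $C_{x}^{-1}$ and the commutation operators $\theta_{i}$, followed by a recursive call on a word over a \emph{permuted} alphabet. The analysis here is not parallel to the Foata case at all: because the alphabet is cyclically shifted at each recursive step, one cannot track the tail permutation directly and must instead prove a strengthened invariant about the rightmost letters drawn from each prefix set $A_{r}=\{\tau_{1},\ldots,\tau_{r}\}$, together with nontrivial lemmas describing how $\theta_{(m-2)!}$ moves these rightmost occurrences (Lemmas \ref{Lemma-c_i-belongs}--\ref{Lemma-LastA} and the proof of Proposition \ref{Prop-Han-Z}, which is the bulk of Section \ref{section-maj-Z}). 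You acknowledge this verification as ``the main obstacle'' and offer only a fallback of computing explicit product formulas, which you also do not carry out; note that no closed product form for these generating functions over $\mathfrak{S}_{M}^{\tau}$ is given or known in the paper, so the fallback is not obviously viable either. As it stands the proposal is a plausible plan, correct in outline for {\footnotesize INV}, {\footnotesize MAJ}, {\footnotesize MAJ}$_{d}$, but missing the proof of its hardest component, the {\footnotesize Z} case.
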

Setting $\tau=12\ldots m$ in (\ref{Main-general-eq-1}) gives the top row of (\ref{Main-MacMahon-eq-1}).
\begin{remark}
\emph{
Small examples show that  none of $r$-{\footnotesize MAJ}, {\footnotesize DEN}, {\footnotesize MAK} and {\footnotesize MAD}  is  equidistributed with {\footnotesize INV} (or {\footnotesize MAJ}, {\footnotesize MAJ}$_{d}$, {\footnotesize Z}) on $\mathfrak{S}_{M}^{\tau}$ for consecutive $\tau$ in general.
}

\end{remark}
\begin{remark}
\emph{Small examples show that any two of
$(\text{des}, \text{\footnotesize MAJ})$, $(\text{mstc}, \text{\footnotesize INV})$,
$(\text{exc}, \text{\footnotesize DEN})$ and
$(\text{des}, \text{\footnotesize MAK})$
are \emph{not} equidistributed on $\mathfrak{S}_{M}^{\tau}$ for consecutive $\tau$ in general.
}\end{remark}

To conclude this subsection,
we give an example.
Table \ref{Table-3} gives the distributions of the  statistics {\footnotesize INV}, {\footnotesize MAJ}, {\footnotesize MAJ}$_{2}$, {\footnotesize Z}, $2$-{\footnotesize MAJ}, {\footnotesize DEN}, {\footnotesize MAK}, {\footnotesize MAD}, {\footnotesize STAT} on $\mathcal{P}_{\{1,1,2,2,3,3\}}$.
We see that
\begin{align*}
\begin{split}
&\sum_{w\in\mathcal{P}_{\{1,1,2,2,3,3\}}}q^{\text{\tiny STAT}(w)}=
1+q^{2}+2q^{3}+3q^{4}+2q^{5}+q^{6}+2q^{7}+2q^{8}+q^{9},\\
&\sum_{w\in\mathcal{P}_{\{1,1,2,2,3,3\}}}~q^{S(w)}~~=1+2q+3q^{2}+3q^{3}+3q^{4}+2q^{5}+q^{6},
\end{split}
\end{align*}
where $S$ is any of {\footnotesize INV}, {\footnotesize MAJ}, {\footnotesize MAJ}$_{2}$, {\footnotesize Z}, $2$-{\footnotesize MAJ}, {\footnotesize DEN}, {\footnotesize MAK}, {\footnotesize MAD}.
\begin{table}[t]
\centering
\begin{tabular}{l|ccccccccc}
\hline
$\mathcal{P}_{\{1,1,2,2,3,3\}}$
&{\footnotesize INV}\!
&{\footnotesize MAJ}
&{\footnotesize MAJ}$_{2}$
&{\footnotesize Z}
&{\footnotesize 2-MAJ}
&{\footnotesize DEN}
&{\footnotesize MAK}
&{\footnotesize MAD}
&{\footnotesize STAT}
\\
\hline
112233 &0 & 0 &0 & 0 &0 & 0  &0 &0 &0\\
112323 &1 & 4 &1 & 2 &1 & 4  &4 &1 &3\\
113223 &2 & 3 &4 & 1 &2 & 3  &3 &2 &4\\
121233 &1 & 2 &1 & 2 &1 & 2  &2 &1 &5\\
121323 &2 & 6 &2 & 4 &2 & 6  &6 &2 &8\\
123123 &3 & 3 &6 & 6 &5 & 5  &3 &4 &4\\
131223 &3 & 2 &3 & 3 &4 & 2  &2 &3 &5\\
132123 &4 & 5 &4 & 5 &3 & 3  &5 &3 &9\\
211233 &2 & 1 &2 & 1 &2 & 1  &1 &2 &4\\
211323 &3 & 5 &3 & 3 &3 & 5  &5 &3 &7\\
213123 &4 & 4 &5 & 5 &6 & 4  &4 &6 &8\\
231123 &5 & 2 &4 & 4 &5 & 3  &2 &5 &3\\
311223 &4 & 1 &2 & 2 &3 & 1  &1 &4 &2\\
312123 &5 & 4 &3 & 4 &4 & 2  &3 &5 &6\\
321123 &6 & 3 &5 & 3 &4 & 4  &4 &4 &7\\

\hline
\end{tabular}
\caption{The distributions of the  statistics {\footnotesize INV}, {\footnotesize MAJ}, {\footnotesize MAJ}$_{2}$, {\footnotesize Z}, $2$-{\footnotesize MAJ}, {\footnotesize DEN}, {\footnotesize MAK}, {\footnotesize MAD}, {\footnotesize STAT} on $\mathcal{P}_{\{1,1,2,2,3,3\}}$.}\label{Table-3}
\end{table}
\subsection{$q$-Stirling numbers of the second kind}
A $q$-analog of a mathematical object is an object depending on the variable $q$ that reduces to the original object when we set $q=1$.
Let
\begin{align*}
[n]_{q}=1+q+q^{2}+\cdots+q^{n-1},~\quad
[n]_{q}!=[1]_{q}[2]_{q}\ldots[n]_{q},~\quad
{n \choose i}_{q}=\frac{[n]_{q}!}{[i]_{q}![n-i]_{q}!}.
\end{align*}
Then $[n]_{q}$, $[n]_{q}!$ and ${n \choose i}_{q}$ are the $q$-analogs of $n$, $n!$ and ${n \choose i}$ respectively.

It is well-known that
the total number of partitions of $[n]$ is the \emph{Bell number}  $B(n)$,
and that
the number of partitions of $[n]$ with exactly $m$ blocks is the \emph{Stirling number of the second kind} $S(n,m)$.
That is,
$$|\Pi_{n}|=B(n),~\quad|\Pi_{n,m}|=S(n,m).$$
There are two recursions for $S(n,m)$:
\begin{align}\label{Stirling numbers recursion 1}
S(n,m)=S(n-1,m-1)+mS(n-1,m), \quad S(0,m)=\delta_{0,m},
\end{align}
and
\begin{align}\label{Stirling numbers recursion 2}
S(n+1,m)=\sum_{i=0}^{n}{n \choose i}S(n-i,m-1), \quad S(0,m)=\delta_{0,m},
\end{align}
where $\delta_{n,m}$ is the Kronecker delta, defined by $\delta_{n,m}=1$ if $n=m$, and $\delta_{n,m}=0$ otherwise.
Considering the $q$-analogs of the above two recursions leads to two different kinds of $q$-Stirling numbers of the second kind.

\emph{Carlitz's $q$-Stirling numbers of the second kind}, denoted $S_{q}(n,m)$,  are defined by the following recursion
$$S_{q}(n,m)=S_{q}(n-1,m-1)+[m]_{q}S_{q}(n-1,m), \quad\quad S_{q}(0,m)=\delta_{0,m},$$
which is a $q$-analog of (\ref{Stirling numbers recursion 1}).
These polynomials were first studied by Carlitz \cite{Carlitz-1933,Carlitz-1948}
and then Gould \cite{Gould-1961}.
Milne \cite{Milne-1982} introduced an inversion statistic, here we denote  $\underline{\text{inv}}$, for set partitions via canonical representation,
and he proved
$$S_{q}(n,m)=\sum_{w\in\Pi_{n,m}}q^{\underline{\text{inv}}(w)}.$$
Sagan \cite{Sagan-1991} introduced a major index statistic, here we denote  $\underline{\text{maj}}$,
for set partitions via block representation,  and he proved
$$S_{q}(n,m)=\sum_{w\in\Pi_{n,m}}q^{\underline{\text{maj}}(w)}.$$

\emph{Johnson's $q$-Stirling numbers of the second kind}, denoted  {\scriptsize$\left\{
 \begin{matrix}
   n\\
   m
  \end{matrix}
  \right\}_{q}$},
  are defined by the following recursion
$$\left\{\begin{matrix}
   n+1\\
   m
  \end{matrix}
  \right\}_{q}=
  \sum_{i=0}^{n}
  \left(
  \begin{matrix}
   n\\
   i
  \end{matrix}
  \right)_{q}
  \left\{
 \begin{matrix}
   n-i\\
   m-1
  \end{matrix}
  \right\}_{q},\quad\quad \left\{\begin{matrix}
   0\\
   m
  \end{matrix}
  \right\}_{q}=\delta_{0,m},$$
which is a $q$-analog of (\ref{Stirling numbers recursion 2}).
These polynomials were first studied by Johnson \cite{Johnson-1996-2}.
As pointed out by Johnson \cite{Johnson-1996-2}, {\scriptsize$\left\{
 \begin{matrix}
   n\\
   m
  \end{matrix}
  \right\}_{q}$}
is different from $S_{q}(n,m)$.
Johnson \cite{Johnson-1996-2} proved
$$\quad\left\{\begin{matrix}
   n\\
   m
  \end{matrix}
  \right\}_{q}=\sum_{w\in\Pi_{n,m}}q^{\text{\tiny INV}(w)},$$
where $w$ uses the Mahonian representation.
Combining Johnson's result with Theorem \ref{Thm-MacMahon-set-partition},
we have the following corollary.
\begin{corollary}\label{Cor-q-Stirling number}
Using the Mahonian representation, we have
$$\quad\left\{\begin{matrix}
   n\\
   m
  \end{matrix}
  \right\}_{q}=\sum_{w\in\Pi_{n,m}}q^{S(w)},$$
where $S$ is any of \emph{{\footnotesize INV}, {\footnotesize MAJ}, {\footnotesize MAJ}$_{d}$, $r$-{\footnotesize MAJ}, {\footnotesize Z}, {\footnotesize DEN}, {\footnotesize MAK}, {\footnotesize MAD}}.
\end{corollary}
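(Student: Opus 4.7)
The plan is to deduce the corollary as an immediate consequence of Theorem 2.1 combined with Johnson's identity. The key observation, already recorded earlier in the excerpt, is that the Mahonian representation organizes $\Pi_{n,m}$ as a disjoint union
\begin{align*}
\Pi_{n,m}=\bigsqcup_{|M|=n,\,\langle M\rangle=m}\mathcal{P}_{M},
\end{align*}
indexed by the type $(k_1,k_2,\ldots,k_m)$ of the partition. This decomposition will let me pass from the per-type equidistribution supplied by Theorem 2.1 to a global equidistribution on $\Pi_{n,m}$.

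The argument itself is very short. First I would invoke Johnson's result
$\left\{\begin{smallmatrix} n\\ m\end{smallmatrix}\right\}_{q}=\sum_{w\in\Pi_{n,m}}q^{\text{\tiny INV}(w)}$
(stated in the excerpt under the Mahonian representation). Next I would split the right-hand side over types, writing
\begin{align*}
\sum_{w\in\Pi_{n,m}}q^{\text{\tiny INV}(w)}
=\sum_{|M|=n,\,\langle M\rangle=m}\sum_{w\in\mathcal{P}_{M}}q^{\text{\tiny INV}(w)}.
\end{align*}
For each fixed $M$, Theorem 2.1 asserts that $\sum_{w\in\mathcal{P}_{M}}q^{\text{\tiny INV}(w)}=\sum_{w\in\mathcal{P}_{M}}q^{S(w)}$ for any $S$ among the listed statistics INV, MAJ, MAJ$_{d}$, $r$-MAJ, Z, DEN, MAK, MAD. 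Summing these identities over all eligible multisets $M$ and recombining via the disjoint union gives $\sum_{w\in\Pi_{n,m}}q^{\text{\tiny INV}(w)}=\sum_{w\in\Pi_{n,m}}q^{S(w)}$, which together with Johnson's identity yields the claim.

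Because the corollary is structurally a bookkeeping consequence of Theorem 2.1, there is no substantive obstacle in this proof; all the work has been shouldered earlier in the paper where Theorem 2.1 is established. The only point requiring any care is ensuring that each statistic $S$ in the list is genuinely defined on words in $\mathcal{P}_{M}$ in the same way as on $\Pi_{n,m}$ under the Mahonian representation, which is automatic from our standing convention that a set partition is identified with its Mahonian word.
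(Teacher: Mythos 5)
Your proposal is correct and matches the paper's own (implicitly given) argument: the paper derives the corollary by combining Johnson's identity with Theorem \ref{Thm-MacMahon-set-partition}, using exactly the decomposition $\Pi_{n,m}=\bigcup_{|M|=n,\langle M\rangle=m}\mathcal{P}_{M}$ that you invoke. Nothing further is needed.
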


\subsection{Structure of the paper}
This paper is organized as follows.
In Sections \ref{section-inv-maj}, \ref{section-inv-majd} and \ref{section-maj-Z},
we establish the equidistributions of
{\footnotesize INV} and {\footnotesize MAJ},
{\footnotesize INV} and {\footnotesize MAJ$_{d}$},
{\footnotesize MAJ} and {\footnotesize Z}
on $\mathfrak{S}_{M}^{\tau}$ for consecutive $\tau$, respectively.
In Section  \ref{section-bi-1}, we establish the equidistribution of
the bi-statistics $(\text{mstc}, \text{{\footnotesize INV}})$ and $(\text{des}, \text{{\footnotesize MAJ}})$ on $\mathcal{P}_{M}$.
In Section \ref{section-inv-rmaj}, we  establish  the equidistribution of {\footnotesize INV} and $r$-{\footnotesize MAJ} on $\mathcal{P}_{M}$.
In Section  \ref{section-bi-2}, we establish the equidistribution of the bi-statistics $(\text{des}, \text{\footnotesize MAJ})$ and $(\text{exc}, \text{\footnotesize DEN})$ on $\mathcal{P}_{M}$.
In Section  \ref{section-tri}, we  establish  the equidistribution of the triple statistics
(des, {\footnotesize MAK}, {\footnotesize MAD}) and (exc, {\footnotesize DEN}, {\footnotesize INV}) on $\mathcal{P}_{M}$.
Combining these results
we obtain Theorems \ref{Thm-MacMahon-set-partition}, \ref{Thm-Euler-MacMahon-set-partition} and \ref{Thm-inv-maj-set-partition-general}.

\section{Equidistribution of {\large INV} and {\large MAJ} on $\mathfrak{S}_{M}^{\tau}$}\label{section-inv-maj}
The equidistribution of {\footnotesize INV} and {\footnotesize MAJ} was proved bijectively
for the first time by Foata \cite{Foata-1968}.
We denote this famous bijection by $F$.
The goal of this section is to prove that the classical statistics {\footnotesize INV} and {\footnotesize MAJ} are equidistributed on $\mathfrak{S}_{M}^{\tau}$ for consecutive $\tau$.
We will show this by proving that Foata's bijection $F$ preserves the consecutive tail permutation.
We first give a brief description of $F$.

Given a word $w=w_{1}w_{2}\ldots w_{n}$ and a letter $x$.
We first define an operator $J_{x}$ on $w$.
If $w_{n}\leq x$,
write $w=u_{1}b_{1}u_{2}b_{2}\ldots u_{s}b_{s}$,
where each $b_{i}$ is a letter less than or equal to $x$,
and each $u_{i}$ is a word (possibly empty),
all of whose letters are greater than $x$.
Similarly, if $w_{n}> x$,
write $w=u_{1}b_{1}u_{2}b_{2}\ldots u_{s}b_{s}$,
where each $b_{i}$ is a letter greater than $x$,
and each $u_{i}$ is a word (possibly empty),
all of whose letters  less than or equal to $x$.
In each case we define the operator $J_{x}$ on $w$ to be
$$J_{x}(w)=b_{1}u_{1}b_{2}u_{2}\ldots b_{s}u_{s}.$$
We call the letters $b_{1},b_{2},\ldots,b_{s}$  the \emph{jumping letters},
the remaining letters  the \emph{fixed letters}.
The following  property for the operator $J_{x}$ is crucial
\begin{equation*}
\text{\footnotesize INV}(J_{x}(w)x)-\text{\footnotesize INV}(w)=\left\{
\begin{aligned}
&n, \quad\quad\text{if~}w_{n}>x, \\
&0, \quad\quad\text{if~}w_{n}\leq x.
\end{aligned}
\right.
\end{equation*}
Given $w=w_{1}w_{2}\ldots w_{n}$.
Define $\gamma_{1}=w_{1}$,
and $\gamma_{i+1}=J_{w_{i+1}}(\gamma_{i})w_{i+1}$ for $1\leq i\leq n-1$.
Finally set $F(w)=\gamma_{n}$.

\begin{theorem}[Foata \cite{Foata-1968}]\label{Foata}
$F:\mathfrak{S}_{M}\rightarrow\mathfrak{S}_{M}$ is a bijection
satisfying $$\emph{\footnotesize{MAJ}}(w)=\emph{\footnotesize{INV}}(F(w))$$ for all $w\in\mathfrak{S}_{M}$.
\end{theorem}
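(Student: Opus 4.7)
The plan is to proceed by induction on the length $n$, using the inversion-counting property of $J_x$ (displayed just before the theorem) as the main engine, and then to exhibit an explicit inverse of $F$ to conclude bijectivity.

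First I would verify the key property of $J_x$ itself. Fix $w = w_1 w_2 \ldots w_n$ and a letter $x$, and in either case write $w = u_1 b_1 u_2 b_2 \ldots u_s b_s$ as in the definition, so that $J_x(w)\, x = b_1 u_1 b_2 u_2 \ldots b_s u_s\, x$. I would count inversions contribution by contribution: inversions internal to a single $u_i$ are clearly preserved, and so are inversions among the $b_j$'s. The interesting contributions come from pairs consisting of one letter of some $u_i$ and one of the $b_j$'s, together with pairs formed with the appended letter $x$. A short case analysis shows that in Case 1 ($w_n \leq x$) the new inversions created by moving each $b_i$ to the left of its $u_i$ are exactly cancelled by lost inversions, and the appended $x$ creates none (since $u_s$ is empty and all letters remaining to the right of $x$ are $\leq x$), giving net change $0$; in Case 2 ($w_n > x$) the appended $x$ forms inversions with each of the $n$ letters of $w$, and the rearrangement is inversion-preserving, producing exactly $n$ new inversions.

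With this in hand I would prove by induction on $i$ that $\text{INV}(\gamma_i) = \text{MAJ}(w_1 w_2 \ldots w_i)$ for $1 \leq i \leq n$. The base case $i = 1$ is trivial since $\gamma_1 = w_1$. For the step, note that by construction the last letter of $\gamma_i$ is $w_i$, so applying the $J_x$-property with input word $\gamma_i$ and letter $w_{i+1}$ yields
\begin{equation*}
\text{INV}(\gamma_{i+1}) - \text{INV}(\gamma_i) = \begin{cases} i & \text{if } w_i > w_{i+1}, \\ 0 & \text{if } w_i \leq w_{i+1}, \end{cases}
\end{equation*}
which matches exactly the change $\text{MAJ}(w_1 \ldots w_{i+1}) - \text{MAJ}(w_1 \ldots w_i)$, since a descent at position $i$ contributes precisely $i$ to $\text{MAJ}$. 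Setting $i = n$ gives $\text{INV}(F(w)) = \text{MAJ}(w)$.

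For bijectivity, I would describe $F^{-1}$ by reversing the construction letter by letter. Given any $\gamma \in \mathfrak{S}_M$, its last letter must be $w_n$; deleting it leaves $J_{w_n}(\gamma_{n-1})$. The operator $J_{w_n}$ is reversible: the first letter of $J_{w_n}(\gamma_{n-1})$ determines which case of the definition applied (Case 1 if it is $\leq w_n$, Case 2 otherwise), after which the decomposition $b_1 u_1 \ldots b_s u_s$ can be re-parsed by marking each maximal block of ``wrong type'' and then reassembled as $u_1 b_1 \ldots u_s b_s$ to recover $\gamma_{n-1}$. Iterating recovers $w$. Since $F$ thus has a well-defined inverse on the finite set $\mathfrak{S}_M$, it is a bijection. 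The main obstacle is the case analysis establishing the $J_x$ property; once that lemma is in place, the rest of the proof is a straightforward induction together with an unraveling argument.
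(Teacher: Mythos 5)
This theorem is quoted from Foata's 1968 paper; the present paper gives no proof of it (it only records the construction of $F$ and the displayed identity for $J_{x}$), so there is no internal proof to compare against and your argument must be judged on its own terms. Your overall strategy is the standard one and is structurally sound: establish the displayed inversion identity for $J_{x}$; deduce by induction that $\text{\footnotesize INV}(\gamma_{i+1})-\text{\footnotesize INV}(\gamma_{i})$ equals $i$ or $0$ according as $w_{i}>w_{i+1}$ or not, using the fact that $\gamma_{i}$ ends in $w_{i}$, which matches the increment of {\footnotesize MAJ} on prefixes; and invert $F$ letter by letter, using the observation that the first letter of $J_{x}(w)$ reveals which case of the definition was applied. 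The induction and the description of $F^{-1}$ are correct.

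The one genuine problem is that your case analysis for the $J_{x}$ identity is wrong in both cases, even though the totals come out right. In Case 1 ($w_{n}\le x$) the rearrangement $u_{1}b_{1}\ldots u_{s}b_{s}\mapsto b_{1}u_{1}\ldots b_{s}u_{s}$ creates no new inversions at all: it \emph{destroys} the $\sum_{i}|u_{i}|$ inversions that each $b_{i}$ forms with the letters of $u_{i}$ (all of which exceed $x\ge b_{i}$), and it is the appended $x$ that contributes exactly $\sum_{i}|u_{i}|$ new inversions, one for each letter greater than $x$ --- not ``none'' as you claim. In Case 2 ($w_{n}>x$) the appended $x$ does \emph{not} form an inversion with each of the $n$ letters of $w$, since the letters of the $u_{i}$ are all $\le x$; it contributes only $s$ inversions, one for each $b_{j}$, and the rearrangement is not inversion-preserving: it creates the remaining $n-s=\sum_{i}|u_{i}|$ inversions by moving each $b_{i}$ (which exceeds $x$, hence exceeds every letter of $u_{i}$) in front of $u_{i}$. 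The correct bookkeeping is that in either case the only pairs whose relative order changes are $\{b_{i},\ \text{letter of }u_{i}\}$, losing (Case 1) or gaining (Case 2) $\sum_{i}|u_{i}|$ inversions, while the appended $x$ always gains one inversion per letter of $w$ exceeding $x$. With this step repaired, the rest of your proof goes through.
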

\begin{example}
\emph{Let $w=211323$,
it is not hard to see that $\text{\footnotesize{MAJ}}(w)=5$.
We give the  procedure for creating $F(w)$:
\begin{align*}
 \gamma_{1}&=2,\\
 \gamma_{2}&=J_{1}(2)1=21, \\
 \gamma_{3}&=J_{1}(21)1=121, \\
 \gamma_{4}&=J_{3}(121)3=1213,\\
 \gamma_{5}&=J_{2}(1213)2=31212, \\
 \gamma_{6}&=J_{3}(31212)3=312123=F(w).
\end{align*}
Note that $\text{\footnotesize{INV}}(F(w))=5$.}
\end{example}

The following result shows that Foata's bijection preserves the consecutive tail permutation.

\begin{proposition}\label{Prop-Foata-closed-on-M}
Let $M=\{1^{k_{1}},2^{k_{2}},\ldots,m^{k_{m}}\}$ with $k_{i}\geq1$ for all $i\in[m]$,
and let $\tau$ be a consecutive permutation of $[m]$.
Then the set $\mathfrak{S}_{M}^{\tau}$ is invariant under Foata's bijection, that is
$$F(\mathfrak{S}_{M}^{\tau})=\mathfrak{S}_{M}^{\tau}.$$
\end{proposition}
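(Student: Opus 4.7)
I prove $F(\mathfrak{S}_M^\tau)\subseteq\mathfrak{S}_M^\tau$ by induction on $n$; the reverse inclusion is automatic since $F$ is a bijection on the finite set $\mathfrak{S}_M$. The engine is the identity $J_m(v)=v$ for every word $v$ with letters in $[m]$: in the decomposition $v=u_1b_1\cdots u_sb_s$ defining $J_m$, each $u_i$ must consist of letters $>m$ and is therefore empty. Dually, $J_1(v)=v$ whenever every letter of $v$ exceeds $1$. Since $\tau$ is consecutive, $\tau_m\in\{1,m\}$, and I describe the case $\tau_m=m$; the case $\tau_m=1$ is handled by the dual identity together with the parallel preservation of the subword of letters $>1$ under each $J_x$.

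Under $\tau_m=m$, Foata's recursion $F(w)=J_{w_n}(F(w_1\cdots w_{n-1}))\,w_n$ collapses to $F(w)=F(w_1\cdots w_{n-1})\cdot m$. Iterating over the maximal trailing run of $m$'s, of length $t$, and writing $\hat{w}=w_1\cdots w_{n-t}$ gives $F(w)=F(\hat{w})\cdot m^t$. Consecutiveness of $\tau$ forces every position of $w$ past $p_{\tau_{m-1}}$ to carry the letter $m$, so $t=n-p_{\tau_{m-1}}$, $\hat{w}$ ends with $\tau_{m-1}$, and $\hat{w}|_{<m}=w|_{<m}$. Since appending $m^t$ only relocates the last $m$ to position $n$, the target $F(w)\in\mathfrak{S}_M^\tau$ reduces to
\[
\mathrm{tail\text{-}perm}\bigl(F(\hat{w})|_{<m}\bigr)=\tau_1\cdots\tau_{m-1}.
\]
If $\hat{w}$ contains no $m$, this is the inductive hypothesis applied to the shorter word $w|_{<m}$, whose tail permutation $\tau_1\cdots\tau_{m-1}$ on $[m-1]$ is still consecutive.

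The remaining, and harder, case is when $\hat{w}$ still contains $m$'s: its tail permutation is then in general non-consecutive, so the inductive hypothesis does not apply to $\hat{w}$ directly. I plan to handle this by first establishing the auxiliary invariant
\[
\mathrm{tail\text{-}perm}\bigl(F(v)|_{<m}\bigr)=\mathrm{tail\text{-}perm}\bigl(F(v|_{<m})\bigr)\qquad(\dagger)
\]
for every word $v$, proved by an inner induction on $|v|$ via Foata's recursion. When the last letter $v_n$ satisfies $v_n\ge m-1$, the sub-lemma $J_{v_n}(v)|_{\le v_n}=v|_{\le v_n}$ (read off the $J$-decomposition: the relative orders of the $b_i$'s and of the letters inside each $u_i$ are both preserved by $J_x$) gives the step immediately. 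The main obstacle lies in the case $v_n<m-1$, where $J_{v_n}$ genuinely reshuffles letters across the $\le v_n$/$>v_n$ boundary and small examples already show that $J_{v_n}(F(v'))|_{<m}$ and $J_{v_n}(F(v'|_{<m}))$ need not agree even as tail permutations. The crux is to show that appending $v_n$ on the right reconciles the discrepancy at the level of tail permutations --- intuitively because the trailing $v_n$ pins the last occurrence of $v_n$ at position $|v|$ and the remaining order of last occurrences is governed by the (inductively equal) tail permutations of $F(v')|_{<m}$ and $F(v'|_{<m})$ --- and it is here that I expect the bulk of the technical work. Applying $(\dagger)$ at $v=\hat{w}$ (noting $\hat{w}|_{<m}=w|_{<m}$) and invoking the outer inductive hypothesis on $w|_{<m}$ then closes the argument.
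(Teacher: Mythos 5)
Your proposal is not a proof: the step you yourself flag as ``the bulk of the technical work'' --- the invariant $(\dagger)$, i.e.\ that $F(v)|_{<m}$ and $F(v|_{<m})$ have the same tail permutation --- is precisely the content of the proposition in the hard case, and it is left entirely open. You even record that the natural inductive step for $(\dagger)$ fails before the final letter is appended, and offer only the expectation that appending $v_n$ ``reconciles the discrepancy''; until that is actually proved (and for arbitrary $v$, since your inner induction needs the unrestricted statement), the argument does not close. There is a second, independent problem: the case $\tau_m=1$ does not collapse the way you claim. The identity $J_1(v)=v$ requires $v$ to contain no letter $1$, but when $k_1\ge 2$ the intermediate word $\gamma_i$ does contain earlier $1$'s, and then $F(w)\neq F(\hat{w})\cdot 1^{\,t}$. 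For instance $w=1221\in\mathfrak{S}_{\{1^2,2^2\}}^{21}$ has $F(122)=122$ but $F(1221)=2121\neq 1221$. So that branch needs its own argument (tracking the subword of letters $>1$, as you gesture at), and it again reduces to a $(\dagger)$-type statement rather than to the plain inductive hypothesis.

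The paper avoids all of this by inducting along Foata's recursion rather than on $n$. Mark in $w$ the last occurrence of each letter; at stage $i$ the set of marked letters already processed is $\{\tau_1,\ldots,\tau_s\}$, which by consecutiveness of $\tau$ is an interval of integers not containing $w_{i+1}$, so each $\tau_k$ with $k\le s$ (and hence every copy of it in $\gamma_i$) lies entirely on one side of $w_{i+1}$. Consequently, under $J_{w_{i+1}}$ the marked letters are either all jumping or all fixed: the marked subword is unchanged and each marked letter remains the last occurrence of its value. This single observation gives the result in one pass, with no restriction to a subalphabet and no case split on $\tau_m$. If you pursue your route, proving $(\dagger)$ will almost certainly force you to track last occurrences through the recursion anyway, at which point the paper's direct argument is strictly simpler.
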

\begin{proof}
To prove $F(\mathfrak{S}_{M}^{\tau})=\mathfrak{S}_{M}^{\tau}$,
it suffices to show that $F(w)\in\mathfrak{S}_{M}^{\tau}$ for all $w\in\mathfrak{S}_{M}^{\tau}$,
because $F$ is a  bijection.
Suppose that $\tau=\tau_{1}\tau_{2}\ldots\tau_{m}$.

Given a word  with some letters overlined,
we call the subword consisting of the overlined letters the \emph{overlined subword}.
For example,
the overlined subword of $331\overline{3}2211\overline{2}4\overline{4}\overline{1}$ is $\overline{3241}$.

Given $w=w_{1}w_{2}\ldots w_{n}\in\mathfrak{S}_{M}^{\tau}$,
we overline the last occurrences of the letters $1,2,\ldots,m$.
Thus, the overlined subword of $w$ is $\overline{\tau_{1}\tau_{2}\ldots\tau_{m}}$ as $w\in\mathfrak{S}_{M}^{\tau}$.
Note that $\gamma_{i}$ is a word with some letters overlined for $1\leq i\leq n$,
because $\gamma_{i}$ is a rearrangement of $w_{1}w_{2}\ldots w_{i}$.
We claim that for any $i$, $1\leq i\leq n$, we have
\begin{itemize}
\item[(i)] the overlined subword of $\gamma_{i}$ is the same as the overlined subword of $w_{1}w_{2}\ldots w_{i}$.
\item[(ii)] each overlined letter of $\gamma_{i}$ is the last occurrence of that letter in $\gamma_{i}$.

\end{itemize}

We use induction on $i$ to prove our claim.
The initial case of $i=1$ is obvious.
Assume that our claim  is true for $i$ and prove it for $i+1$, where $1\leq i\leq n-1$.
Suppose that the overlined subword of $w_{1}w_{2}\ldots w_{i}$ is $\overline{\tau_{1}\tau_{2}\ldots\tau_{s}}$.
For $1\leq k\leq s$,
because $\overline{\tau}_{k}$, which is the last occurrence of $\tau_{k}$ in $w$, appears in $w_{1}w_{2}\ldots w_{i}$,
we have that $w_{i+1}\notin\{\tau_{1},\tau_{2},\ldots,\tau_{s}\}$.

Below we assume that $\gamma_{i}=c_{1}c_{2}\ldots c_{i}$.
(This will simplify the description of the proof of Proposition \ref{Prop-Foata-d-closed-on-M} in the next section although it is not necessary.)

(A) We first prove (i) for $i+1$.
By the induction hypothesis, the overlined subword of $\gamma_{i}$ is $\overline{\tau_{1}\tau_{2}\ldots\tau_{s}}$.
Since $\{\tau_{1},\tau_{2},\ldots,\tau_{s}\}$ is a set of consecutive numbers and $w_{i+1}\notin\{\tau_{1},\tau_{2},\ldots,\tau_{s}\}$,
we see that either all of $\tau_{1},\tau_{2},\ldots,\tau_{s}$ are greater than $w_{i+1}$,
or all of them are smaller than $w_{i+1}$.
It follows that  either all of the overlined letters of $\gamma_{i}$ are  greater than $w_{i+1}$, or all of them are smaller than $w_{i+1}$.
Then either all of the overlined letters of $c_{1}c_{2}\ldots c_{i}$  are jumping letters,
or all of them are fixed letters.
Combining this with the definition of the operator $J_{w_{i+1}}$,
we can see that the operator $J_{w_{i+1}}$ dose not change the overlined subword of $c_{1}c_{2}\ldots c_{i}$.
Then by the definition of $\gamma_{i+1}$,
we have that the overlined subword of $\gamma_{i+1}$ is the same as overlined subword of $w_{1}w_{2}\ldots w_{i}w_{i+1}$,
this  completes the proof of (i).

(B) We now prove (ii) for $i+1$.
For $1\leq k\leq s$,
by the induction hypothesis
we see that $\overline{\tau}_{k}$ is the last occurrence of the letter $\tau_{k}$ in $\gamma_{i}$.
It is not hard to see that
either all the occurrences of $\tau_{k}$ in $c_{1}c_{2}\ldots c_{i}$ are jumping letters,
or all of them are fixed letters.
Combining this with the fact that $w_{i+1}\notin\{\tau_{1},\tau_{2},\ldots,\tau_{s}\}$,
we obtain that $\overline{\tau}_{k}$ is the last occurrence of the letter $\tau_{k}$ in $\gamma_{i+1}$, $1\leq k\leq s$.
If $w_{i+1}$ is not overlined, we complete the proof of (ii) for $i+1$.
If $w_{i+1}$ is overlined, it must be the last occurrence of that letter in $\gamma_{i+1}$,
we also complete the proof of (ii) for $i+1$.

By (i) of our claim,
the overlined subword of $\gamma_{n}=F(w)$ is $\overline{\tau_{1}\tau_{2}\ldots\tau_{m}}$.
By (ii) of our claim,
the tail permutation of $F(w)$ is
$\tau_{1}\tau_{2}\ldots\tau_{m}$.
So $F(w)\in\mathfrak{S}_{M}^{\tau}$,
completing the proof.
\end{proof}

\begin{remark}
\emph{
It is not hard to prove that if $\tau$ is a consecutive permutation of $[m]$, then
$F(\tau)=\tau$.
(This can also be deduced by a theorem of Bj\"{o}rner and Wachs \cite[Theorem 4.2]{Bjorner-1991}.
Also see \cite{Chen-2011}.)
Proposition \ref{Prop-Foata-closed-on-M} can be viewed a generalization of this result to words,
because Proposition \ref{Prop-Foata-closed-on-M} gives the above result when we set $M=\{1,2,\ldots,m\}$.
}
\end{remark}
By Theorem \ref{Foata} and Proposition \ref{Prop-Foata-closed-on-M} we obtain the equidistribution of {\footnotesize INV} and {\footnotesize MAJ} on $\mathfrak{S}_{M}^{\tau}$
for consecutive $\tau$,
and we  achieve the goal of this section.

\section{Equidistribution of {\large INV} and {\large MAJ}$_{d}$  on $\mathfrak{S}_{M}^{\tau}$}\label{section-inv-majd}
Given $w=w_{1}w_{2}\ldots w_{n}\in\mathfrak{S}_{M}$,
let $d$ be a positive integer,
define
\begin{align*}
\text{\footnotesize{MAJ}}_{d}(w)=\text{inv}_{d}(w)+\sum_{
w_{i}>w_{i+d}}i,
\end{align*}
where
$$\text{inv}_{d}(w)=|\{(i,j):i<j<i+d,~w_{i}>w_{j}\}|.$$
It is not hard to see that
$\text{\footnotesize{MAJ}}_{1}=\text{\footnotesize{MAJ}}$ and  $\text{\footnotesize{MAJ}}_{n}=\text{\footnotesize{INV}}$,
thus the family of $\text{\footnotesize{MAJ}}_{d}$ interpolates between {\footnotesize{MAJ}} and {\footnotesize{INV}}.
The statistic {\footnotesize MAJ}$_{d}$ was introduced by Kadell \cite{Kadell-1985},
who gave a bijective proof that this statistic is Mahonian.
Kadell's bijection  takes {\footnotesize INV} to {\footnotesize MAJ}$_{d}$,
with the extreme case taking {\footnotesize INV} to
{\footnotesize MAJ} corresponding precisely to the inverse of Foata's bijection.
Assaf \cite{Assaf-2008} exhibited a different family of bijections,
taking {\footnotesize MAJ}$_{d-1}$  to {\footnotesize MAJ}$_{d}$.
In his Ph.D. thesis \cite{Liang-1989},
Liang gave a bijection $F_{d}$ that takes {\footnotesize MAJ}$_{d}$ to {\footnotesize INV}
(the author independently found this bijection, and later discovered the Ph.D. thesis of Liang \cite{Liang-1989}).
The bijection $F_{d}$ is a natural extension of Foata's bijection and is the inverse of Kadell's bijection.

We now describe the bijection $F_{d}$.
Given $w=w_{1}w_{2}\ldots w_{n}\in\mathfrak{S}_{M}$.
We will define words $\gamma_{1},\gamma_{2},\ldots,\gamma_{n}$,
where $\gamma_{i}$ is a rearrangement of $w_{1}w_{2}\ldots w_{i}$.
First define $\gamma_{i}=w_{1}w_{2}\ldots w_{i}$ for $1\leq i\leq d$.
Assume that $\gamma_{i}=c_{1}c_{2}\ldots c_{i}$ has been defined for some $d\leq i\leq n-1$.
Then define
\begin{align}\label{Eq-Foata-bijection-eq-extension}
\gamma_{i+1}=J_{w_{i+1}}(c_{1}c_{2}\ldots c_{i-d+1})c_{i-d+2}c_{i-d+3}\ldots c_{i}w_{i+1}.
\end{align}
Finally, set $F_{d}(w)=\gamma_{n}$.
When $d=1$, we have $\gamma_{i+1}=J_{w_{i+1}}(\gamma_{i})w_{i+1}.$
Thus $F_{d}$ reduces to Foata's bijection $F$ when $d=1$.
We have the following theorem.
\begin{theorem}[Liang \cite{Liang-1989}]\label{Thm-F_d-bijection}
$F_{d}:\mathfrak{S}_{M}\rightarrow\mathfrak{S}_{M}$ is a bijection
satisfying
$$\emph{\footnotesize{MAJ}}_{d}(w)=\emph{\footnotesize{INV}}(F_{d}(w))$$ for all $w\in\mathfrak{S}_{M}$.
\end{theorem}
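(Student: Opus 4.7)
The plan is to prove the identity $\text{\footnotesize{MAJ}}_{d}(w) = \text{\footnotesize{INV}}(F_{d}(w))$ by a refined induction: I will establish that $\text{\footnotesize{INV}}(\gamma_{i}) = \text{\footnotesize{MAJ}}_{d}(w_{1}w_{2}\ldots w_{i})$ for every $1 \le i \le n$, and then the case $i = n$ yields the claim. Bijectivity can be handled separately: $J_{x}$ is invertible from knowledge of $x$ together with its output, since the first letter of $J_{x}(v)$ compared with $x$ determines which of the two cases in the definition of $J_{x}$ applies, after which the block decomposition into jumping and fixed letters is forced; hence the recursion (\ref{Eq-Foata-bijection-eq-extension}) can be undone one step at a time by peeling off $w_{i+1}$ from the end, leaving the preceding $d-1$ letters in place, and inverting $J_{w_{i+1}}$ on the initial prefix. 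The key structural observation, itself a quick induction on $i$ using (\ref{Eq-Foata-bijection-eq-extension}), is that for every $i \ge d$ the last $d$ letters of $\gamma_{i}$ are exactly $w_{i-d+1}, w_{i-d+2}, \ldots, w_{i}$ in that order, because the recursion only rearranges the prefix of length $i - d + 1$ and leaves the subsequent $d - 1$ letters untouched, appending $w_{i+1}$ at the end. The base range $1 \le i \le d$ of the main induction is immediate, since $\gamma_{i} = w_{1}\ldots w_{i}$ and on such short prefixes both pieces of $\text{\footnotesize{MAJ}}_{d}$ collapse to $\text{\footnotesize{INV}}$ (no index $j$ has $j + d \le i$, and every inversion pair $(j, k)$ with $k \le i$ automatically satisfies $k < j + d$).

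For the inductive step from $i$ to $i + 1$ (with $i \ge d$), I would write $\gamma_{i} = c_{1}\ldots c_{i}$, split it as $vu$ with $v = c_{1}\ldots c_{i-d+1}$ and $u = c_{i-d+2}\ldots c_{i}$, and compare $\text{\footnotesize{INV}}(\gamma_{i})$ with $\text{\footnotesize{INV}}(\gamma_{i+1}) = \text{\footnotesize{INV}}(J_{w_{i+1}}(v)\, u\, w_{i+1})$. Inversions inside $u$ are unchanged, and inversions between the $v$-positions and the $u$-positions depend only on the multiset of $v$, so they too survive the rearrangement $v \mapsto J_{w_{i+1}}(v)$. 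The remaining contribution splits cleanly into (a) $\text{\footnotesize{INV}}(J_{w_{i+1}}(v)\, w_{i+1}) - \text{\footnotesize{INV}}(v)$, and (b) the number of letters of $u$ that exceed $w_{i+1}$. By the defining property of $J_{x}$ recalled in Section~\ref{section-inv-maj}, piece (a) equals $i - d + 1$ if the last letter of $v$ exceeds $w_{i+1}$ and $0$ otherwise; by the structural observation this last letter is exactly $w_{i-d+1}$, so (a) matches the new contribution to $\sum_{w_{j} > w_{j+d}} j$ at $j = i - d + 1$. Piece (b) counts exactly the new pairs $(j, i+1)$ satisfying $j < i + 1 < j + d$ and $w_{j} > w_{i+1}$, which is the increment of $\text{inv}_{d}$. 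These matchings combine to give $\text{\footnotesize{INV}}(\gamma_{i+1}) - \text{\footnotesize{INV}}(\gamma_{i}) = \text{\footnotesize{MAJ}}_{d}(w_{1}\ldots w_{i+1}) - \text{\footnotesize{MAJ}}_{d}(w_{1}\ldots w_{i})$, closing the induction.

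The main obstacle is the bookkeeping in this matching step, especially the identification that the last letter of $v$ is $w_{i-d+1}$ — this is precisely what aligns the case dichotomy in the $J_{x}$ identity with the condition $w_{i-d+1} > w_{(i-d+1)+d}$ appearing inside $\text{\footnotesize{MAJ}}_{d}$. Everything else is an expansion of definitions and, specialized to $d = 1$, reduces cleanly to Foata's original argument.
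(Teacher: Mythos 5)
The paper does not actually prove this theorem: it is quoted from Liang's thesis \cite{Liang-1989} and used as a black box, so there is no in-paper argument to compare against. Your proof is correct and is the natural one. The structural observation (that for $i\geq d$ the last $d$ letters of $\gamma_{i}$ are $w_{i-d+1}\ldots w_{i}$ in order, since (\ref{Eq-Foata-bijection-eq-extension}) only rearranges the length-$(i-d+1)$ prefix) is exactly what is needed, and it makes both halves of the increment match: the term $\text{\footnotesize INV}(J_{w_{i+1}}(v)w_{i+1})-\text{\footnotesize INV}(v)$ equals $(i-d+1)\cdot[\,w_{i-d+1}>w_{i+1}\,]$ by the displayed property of $J_{x}$, which is the new summand of $\sum_{w_{j}>w_{j+d}}j$ at $j=i-d+1$, while the letters of $u$ exceeding $w_{i+1}$ count precisely the new $\mathrm{inv}_{d}$ pairs $(j,i+1)$ with $i-d+2\leq j\leq i$; the cross-inversions between the $v$-block and the $u$-block depend only on multisets and so are unaffected by $J_{w_{i+1}}$. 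The base case $i\leq d$ and the invertibility of $J_{x}$ (the first letter of the output determines which case of the definition applies, after which the jumping/fixed decomposition is forced) are also handled correctly, and at $d=1$ the whole argument specializes to Foata's.
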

\begin{example}
\emph{Let $w=213123$ and $d=2$,
it is not hard to see that $\text{\footnotesize{MAJ}}_{2}(w)=5$.
We give the  procedure for creating $F_{2}(w)$:
\begin{align*}
 \gamma_{1}&=2,\\
 \gamma_{2}&=21, \\
 \gamma_{3}&=J_{3}(2)13=213, \\
 \gamma_{4}&=J_{1}(21)31=1231,\\
 \gamma_{5}&=J_{2}(123)12=31212, \\
 \gamma_{6}&=J_{3}(3121)23=312123=F_{2}(w).
\end{align*}
Note that $\text{\footnotesize{INV}}(F_{2}(w))=5$.}
\end{example}

The following proposition shows  that $F_{d}$ preserves the consecutive tail permutation,
which is a generalization of Proposition \ref{Prop-Foata-closed-on-M}.
\begin{proposition}\label{Prop-Foata-d-closed-on-M}
Let $M=\{1^{k_{1}},2^{k_{2}},\ldots,m^{k_{m}}\}$ with $k_{i}\geq1$ for all $i\in[m]$,
and let $\tau$ be a consecutive permutation of $[m]$,
then
$\mathfrak{S}_{M}^{\tau}$ is invariant under $F_{d}$,
that is,
$$F_{d}(\mathfrak{S}_{M}^{\tau})=\mathfrak{S}_{M}^{\tau}.$$
\end{proposition}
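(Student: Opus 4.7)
The plan is to mimic the inductive proof of Proposition 3.1, but with the operator $J_{w_{i+1}}$ now acting only on the prefix $c_1c_2\ldots c_{i-d+1}$ of $\gamma_i=c_1c_2\ldots c_i$ (as dictated by (\ref{Eq-Foata-bijection-eq-extension})). Given $w\in\mathfrak{S}_M^\tau$, I overline the last occurrences of each letter $1,2,\ldots,m$ in $w$, so that the overlined subword of $w$ is $\overline{\tau_1\tau_2\ldots\tau_m}$. The goal is to establish, by induction on $i$, the same two claims as before: (i) the overlined subword of $\gamma_i$ equals the overlined subword of $w_1w_2\ldots w_i$; and (ii) every overlined letter of $\gamma_i$ is the last occurrence of its value in $\gamma_i$. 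Once these are proved, taking $i=n$ gives $F_d(w)\in\mathfrak{S}_M^\tau$, and the bijectivity of $F_d$ forces $F_d(\mathfrak{S}_M^\tau)=\mathfrak{S}_M^\tau$.

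The base cases $1\le i\le d$ are immediate since $\gamma_i=w_1w_2\ldots w_i$. For the inductive step $i\to i+1$ with $i\ge d$, let $\overline{\tau_1\ldots\tau_s}$ be the overlined subword of $\gamma_i$. The same reasoning used in Proposition 3.1 shows $w_{i+1}\notin\{\tau_1,\ldots,\tau_s\}$; because $\tau$ is consecutive, $\{\tau_1,\ldots,\tau_s\}$ is an interval of integers, so either every $\tau_k$ exceeds $w_{i+1}$ or every $\tau_k$ is $\le w_{i+1}$. Consequently, inside the prefix $c_1\ldots c_{i-d+1}$, either all occurrences of every $\tau_k$ are jumping letters of $J_{w_{i+1}}$, or all of them are fixed letters—and in both scenarios the relative order of same-type letters is preserved. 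The suffix $c_{i-d+2}\ldots c_i$ is untouched by (\ref{Eq-Foata-bijection-eq-extension}), and the prefix region still precedes the suffix region after the transformation, then $w_{i+1}$ is appended.

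To conclude (i), I combine the three observations: the overlined letters inherited from the prefix retain their relative order, the overlined letters in the suffix are unmoved, and prefix precedes suffix before and after $J_{w_{i+1}}$. Hence the concatenated overlined subword is still $\overline{\tau_1\ldots\tau_s}$, and appending $w_{i+1}$ either adds the next letter $\tau_{s+1}$ (if $w_{i+1}$ is the last occurrence in $w$ of its value) or leaves the overlined subword unchanged, matching the overlined subword of $w_1\ldots w_{i+1}$. For (ii), I treat each $\tau_k$ by cases: if $\overline{\tau}_k$ sits in the prefix, the order-preservation of same-type letters under $J_{w_{i+1}}$ keeps it as the last $\tau_k$ in the output prefix, and no further $\tau_k$ can appear in the suffix (by the inductive hypothesis) or equal $w_{i+1}$ (since $w_{i+1}\notin\{\tau_1,\ldots,\tau_s\}$); if $\overline{\tau}_k$ sits in the suffix, it is unmoved, and again no competing $\tau_k$ can appear to its right in $\gamma_{i+1}$. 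If $w_{i+1}$ happens to be overlined, it is trivially the last occurrence of its value at the rightmost position of $\gamma_{i+1}$.

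The main obstacle compared with Proposition 3.1 is precisely that $J_{w_{i+1}}$ no longer acts on the entire word $\gamma_i$: a last-occurrence witness $\overline{\tau}_k$ may live in the ``frozen'' tail $c_{i-d+2}\ldots c_i$, and one must verify that no letter equal to $\tau_k$ is created (or migrated) to its right by $J_{w_{i+1}}$. This is what forces the careful case analysis above, and it is exactly where the consecutiveness of $\tau$—which forces all overlined letters to be jointly jumping or jointly fixed under $J_{w_{i+1}}$—does the essential work.
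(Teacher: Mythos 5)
Your proposal is correct and follows essentially the same route as the paper, which simply adapts paragraphs (A) and (B) of the proof of Proposition \ref{Prop-Foata-closed-on-M} to the setting where $J_{w_{i+1}}$ acts only on the prefix $c_{1}c_{2}\ldots c_{i-d+1}$ while the tail $c_{i-d+2}\ldots c_{i}$ is carried along unchanged. You supply more detail than the paper does (in particular, the explicit verification that overlined witnesses sitting in the frozen tail remain rightmost), but the underlying argument — consecutiveness of $\tau$ forcing all overlined letters in the acted-upon prefix to be jointly jumping or jointly fixed — is identical.
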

Replacing each $c_{1}c_{2}\ldots c_{i}$  by $c_{1}c_{2}\ldots c_{i+d-1}$
in the paragraphs (A) and (B) of the proof of  Proposition \ref{Prop-Foata-closed-on-M}
gives the proof of Proposition \ref{Prop-Foata-d-closed-on-M}.

By Theorem \ref{Thm-F_d-bijection} and Proposition \ref{Prop-Foata-d-closed-on-M},
we obtain the equidistribution of {\footnotesize INV} and $\text{{\footnotesize MAJ}}_{d}$ on $\mathfrak{S}_{M}^{\tau}$
for consecutive $\tau$,
and we  achieve the goal of this section.

\section{Equidistribution of {\large MAJ} and {\large Z} on $\mathfrak{S}_{M}^{\tau}$}\label{section-maj-Z}
Given a word $w=w_{1}w_{2}\ldots w_{n}$,
the \emph{$z$-index} of $w$, denoted by {\footnotesize Z}$(w)$, is defined by
$$\text{\footnotesize Z}(w)=\sum_{i<j}\text{\footnotesize MAJ}(w_{ij}),$$
where $w_{ij}$ is a word obtained from $w$ by deleting all elements except $i$ and $j$.
For example, let $w=312432314$,
\begin{align*}
w_{12}=1221,~~w_{13}=31331,~~w_{14}=1414,~~
w_{23}=32323,~~w_{24}=2424,~~w_{34}=34334,
\end{align*}
then $$\text{\footnotesize Z}(w)=\sum_{i<j}\text{\footnotesize MAJ}(w_{ij})=18.$$
Zeilberger and Bressoud \cite{Zeilberger-1985} proved {\footnotesize Z} is Mahonian by induction.
Greene \cite{Greene-1988} presented  a combinatorial proof.
Han \cite{Han-1992} gave another combinatorial proof by exhibiting a Foata-style bijection, which we will denote as $H_{\text{\tiny Z}}$.
The goal of this section is to establish the equidistribution of the statistics  {\footnotesize MAJ} and {\footnotesize Z} on $\mathfrak{S}_{M}^{\tau}$ for consecutive $\tau$
by proving that Han's bijection $H_{\text{\tiny Z}}$ preserves the consecutive tail permutation.

Before stating  the bijection $H_{\text{\tiny Z}}$,
we need some notions, see \cite{Han-1992}.
Recall that, throughout this paper we let $M=\{1^{k_{1}},2^{k_{2}},\ldots,m^{k_{m}}\}$ with $k_{i}\geq1$ for all $i$.
In this section,
we let $m$ be a fixed number and let
$\textbf{m}=(k_{1},k_{2},\ldots,k_{m}):=\{1^{k_{1}},2^{k_{2}},\ldots,m^{k_{m}}\}$
with $k_{i}\geq0$ for all $i$.
That is,
for the multiset $M$ we assume that $k_{i}\geq1$ and
for the multiset $\textbf{m}$ we assume that $k_{i}\geq0$.

Let $w=x_{1}x_{2}\ldots x_{n}\in\mathfrak{S}_{\textbf{m}}$ and let $x$ be a positive integer,
define
\begin{align*}
C^{x}(w)=y_{1}y_{2}\ldots y_{n},~~~~\text{where}~~y_{i}=C^{x}(x_{i}):=\left
\{
\begin{aligned}
&x_{i}-x, \quad\quad\quad\quad\text{if~}x_{i}>x, \\
&x_{i}-x+m,\quad\quad\text{if~}x_{i}\leq x,
\end{aligned}
\right.
\end{align*}
and
\begin{align*}
C_{x}(w)=z_{1}z_{2}\ldots z_{n},~~~~\text{where}~~z_{i}=C_{x}(x_{i}):=\left
\{
\begin{aligned}
&x_{i}, \quad\quad\quad\quad\text{if~}x_{i}<x, \\
&x_{i}-1, \quad\quad\text{~if~}x_{i}>x, \\
&m, \quad\quad\quad\quad\text{if~}x_{i}=x.
\end{aligned}
\right.
\end{align*}
Note that
$C^{x}(w)\in\mathfrak{S}_{\textbf{m}^{x}}$ and $C_{x}(w)\in\mathfrak{S}_{\textbf{m}_{x}}$,
where
\begin{align}
\textbf{m}^{x}&=(k_{x+1},k_{x+2},\ldots,k_{m},k_{1},k_{2},\ldots,k_{x-1},k_{x}),\label{m^x}\\
\textbf{m}_{x}&=(k_{1},k_{2},\ldots,k_{x-1},k_{x+1},k_{x+2},\ldots,k_{m},k_{x}).\label{m_x}
\end{align}

 \vskip -4mm
Let us  recall the construction of a bijection $\theta_{\textbf{m},\textbf{m}^{\prime}}:\mathfrak{S}_{\textbf{m}}\rightarrow\mathfrak{S}_{\textbf{m}^{\prime}}$,
fixing the statistic {\footnotesize MAJ}.
It is enough to give this construction when $\textbf{m}$ and $\textbf{m}^{\prime}$ differ only by two consecutive letters, say $i$ and $i+1$, that is the bijection
$$
\theta_{i}:\mathfrak{S}_{\textbf{m}}\rightarrow\mathfrak{S}_{\textbf{m}^{\prime}},\text{~where~}
\textbf{m}=(k_{1},\ldots,k_{i},k_{i+1},\ldots,k_{m}),
\textbf{m}^{\prime}=(k_{1},\ldots,k_{i+1},k_{i},\ldots,k_{m}).
$$
We do it as follows: let $w\in\mathfrak{S}_{\textbf{m}}$.
We replace all the $(i+1)i$ factors of this word with a special letter ``$\sim$''.
In the word thus obtained,
the maximum factors containing the two letters $i$ and $i+1$ have the form $i^{a}(i+1)^{b}$
$(a\geq0, b\geq0)$.
We then change these factors to $i^{b}(i+1)^{a}$  and replace each ``$\sim$'' by $(i+1)i$,
to obtain the word $\theta_{i}(w)\in\mathfrak{S}_{\textbf{m}^{\prime}}$.
For example, let $w=1112111222215622$ and $i=1$, we have
\begin{align*}
w~&=~111~21~11222~21~5622\\
&\mapsto111\sim~11222\sim~5622 \\
&\mapsto222\sim~11122\sim~5611\\
&\mapsto222~21~11122~21~5611~=~\theta_{1}(w).
\end{align*}
Note that $\text{Des}(w)=\text{Des}(\theta_{i}(w))$ and thus $\theta_{i}$ fixes the statistic {\footnotesize MAJ}.

The bijection $H_{\text{\tiny Z}}$ is defined, for any word $w\in\mathfrak{S}_{\textbf{m}}$ and any letter $x$, by the following composition:
\begin{align*}
H_{\text{\tiny Z}}(wx)=\left(C_{x}^{-1}\circ H_{\text{\tiny Z}}\circ\theta_{\textbf{m}^{x},\textbf{m}_{x}}\circ C^{x}(w)\right)x.
\end{align*}
\begin{theorem}[Han \cite{Han-1992}]\label{Han-Z}
$H_{\emph{\tiny Z}}$ is a bijection
satisfying
$$\emph{\footnotesize{MAJ}}(w)=\emph{\footnotesize{Z}}(H_{\emph{\tiny Z}}(w))$$
for any $w\in\mathfrak{S}_{\emph{\textbf{m}}}.$
\end{theorem}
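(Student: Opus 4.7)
The plan is to prove both assertions by induction on $n$, the length of the input word, following the recursive definition $H_{\text{Z}}(wx) = (C_x^{-1} \circ H_{\text{Z}} \circ \theta_{\textbf{m}^x, \textbf{m}_x} \circ C^x(w))\, x$. The base case $n \leq 1$ is immediate, since MAJ and Z both vanish on words of length at most one.

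For the bijection claim, each component in the composition is easily seen to be a bijection. The map $C^x$ is an explicit letter-relabeling with inverse a similar cyclic relabeling on the shifted multiset; the map $\theta_{\textbf{m}^x, \textbf{m}_x}$ is a bijection as a composition of the elementary $\theta_i$, whose substitution rule $i^a (i{+}1)^b \leftrightarrow i^b (i{+}1)^a$ is manifestly invertible; $H_{\text{Z}}$ on words of length $n-1$ is a bijection by the inductive hypothesis; and $C_x^{-1}$ is a bijection by definition. Appending the fixed letter $x$ preserves injectivity, so $H_{\text{Z}}$ is a bijection on $\mathfrak{S}_{\textbf{m}}$.

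For the statistic identity, set $u = \theta_{\textbf{m}^x, \textbf{m}_x}(C^x(w))$, $v = H_{\text{Z}}(u)$, and $v' = C_x^{-1}(v)$, so that $H_{\text{Z}}(wx) = v' x$. Since $\theta$ preserves MAJ and the inductive hypothesis gives $\text{MAJ}(u) = \text{Z}(v)$, we also have $\text{MAJ}(C^x(w)) = \text{Z}(v)$. Proving $\text{MAJ}(wx) = \text{Z}(v' x)$ therefore reduces to the local identity
\begin{align*}
\text{MAJ}(wx) - \text{MAJ}(C^x(w)) = \text{Z}(v' x) - \text{Z}(v).
\end{align*}

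The main obstacle is verifying this local identity, which demands a careful bookkeeping of how the cyclic relabellings $C^x$ and $C_x$ redistribute descents among the pair-subwords. The crucial observation is that $C^x$ sends the distinguished letter $x$ to the maximum value $m$, so the single descent condition ``$w_{n-1} > x$'' governing the left-hand contribution must be balanced exactly against the family of pair-subwords $(v'x)_{x,j}$ with $j > x$ whose MAJ contributions to Z change when the trailing $x$ is appended. Matching these two tallies term by term—using the explicit action of $C_x$ on the relative positions of the letters $x$ and $j$ inside $v$—is the technical core, and once it is verified the induction closes and both the bijectivity of $H_{\text{Z}}$ and the identity $\text{MAJ}(w) = \text{Z}(H_{\text{Z}}(w))$ follow.
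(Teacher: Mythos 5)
The paper does not prove this statement at all: it is quoted from Han \cite{Han-1992}, and the surrounding text only recalls the construction of $H_{\text{\tiny Z}}$ so that Proposition \ref{Prop-Han-Z} can be proved. So your attempt has to be judged on its own terms, and as it stands it is a correct \emph{framework} with the decisive step missing. The inductive setup is right: bijectivity follows from the recursion because each of $C^{x}$, $\theta_{\textbf{m}^{x},\textbf{m}_{x}}$, $H_{\text{\tiny Z}}$ (on shorter words) and $C_{x}^{-1}$ is invertible and the appended last letter $x$ is recoverable from the output; and your reduction of the statistic identity to
$\text{\footnotesize MAJ}(wx)-\text{\footnotesize MAJ}(C^{x}(w))=\text{\footnotesize Z}(v'x)-\text{\footnotesize Z}(v)$
is algebraically valid, using $\text{\footnotesize MAJ}(C^{x}(w))=\text{\footnotesize MAJ}(u)=\text{\footnotesize Z}(v)$.

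The gap is that this ``local identity'' is the entire content of the theorem, and you neither state it in a provable closed form nor verify it. What is actually needed is a pair of lemmas with a common right-hand side: (i) $\text{\footnotesize MAJ}(wx)=\text{\footnotesize MAJ}(C^{x}(w))+\lambda$ and (ii) $\text{\footnotesize Z}(v'x)=\text{\footnotesize Z}(v)+\lambda$, where $\lambda=|\{i:w_{i}>x\}|$. Neither is routine. For (i), your description that ``the single descent condition $w_{n-1}>x$ governs the left-hand contribution'' is inaccurate: $C^{x}$ is a cyclic relabelling that reverses the relative order of every pair of adjacent letters straddling $x$, so $\text{\footnotesize MAJ}(w)$ and $\text{\footnotesize MAJ}(C^{x}(w))$ differ at every position $i$ with $w_{i}>x\geq w_{i+1}$ or $w_{i+1}>x\geq w_{i}$, not just at the last position; the identity (i) emerges only after summing these positional contributions against the new descent $(n-1)[w_{n-1}>x]$. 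For (ii), one must track how $C_{x}^{-1}$ (which moves the letter $m$ of $v$ into the middle of the alphabet) changes each two-letter restriction $v_{ij}$ and how appending $x$ adds a final descent to exactly the restrictions $(v'x)_{x,j}$ with the last occurrence of $j$ after the last occurrence of $x$. You name this as ``the technical core'' and assert that ``once it is verified the induction closes,'' which is true but is precisely the part that has to be done; as written, the proposal is a proof sketch, not a proof.
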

 \vskip -2mm
The main result of this section is the following proposition,
which implies the  equidistribution of the statistics  {\footnotesize MAJ} and {\footnotesize Z} on $\mathfrak{S}_{M}^{\tau}$ for consecutive $\tau$.
\begin{proposition}\label{Prop-Han-Z}
Let $M=\{1^{k_{1}},2^{k_{2}},\ldots,m^{k_{m}}\}$ with $k_{i}\geq1$ for all $i\in[m]$,
and let $\tau$ be a consecutive permutation of $[m]$,
the set $\mathfrak{S}_{M}^{\tau}$ is invariant under Han's bijection $H_{\emph{\tiny Z}}$, that is
$$H_{\emph{\tiny Z}}(\mathfrak{S}_{M}^{\tau})=\mathfrak{S}_{M}^{\tau}.$$
\end{proposition}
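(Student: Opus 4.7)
The plan is to prove Proposition~\ref{Prop-Han-Z} by induction on $n=|w|$, in the spirit of the proof of Proposition~\ref{Prop-Foata-closed-on-M}. The base case $n=1$ is immediate. For the inductive step, fix $wx\in\mathfrak{S}_{M}^{\tau}$ with $\tau$ consecutive. Since $\tau_{m}=x$ and $\tau$ is consecutive, $x$ must lie in $\{1,m\}$. A direct inspection of~(\ref{m^x}) and~(\ref{m_x}) shows that for $x\in\{1,m\}$ one has $\textbf{m}^{x}=\textbf{m}_{x}$ and, from the definitions of $C^{x}$ and $C_{x}$, that $C^{x}=C_{x}$ on $[m]$. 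Consequently $\theta_{\textbf{m}^{x},\textbf{m}_{x}}$ is the identity, and the recursion defining $H_{\text{\tiny Z}}$ collapses to
\[
H_{\text{\tiny Z}}(wx)=C_{x}^{-1}\bigl(H_{\text{\tiny Z}}(C^{x}(w))\bigr)\cdot x.
\]
When $k_{x}=1$, the word $w$ is supported on the smaller alphabet $[m]\setminus\{x\}$ with consecutive tail permutation $\tau_{1}\cdots\tau_{m-1}$, so $C^{x}(w)$ lies on an alphabet of size $m-1$ with a consecutive tail permutation; the inductive hypothesis (applied on this smaller alphabet) together with the fact that $C_{x}^{-1}$ is a bijection on letters then delivers the conclusion.

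The main obstacle is the subcase $k_{x}\ge 2$, where $w$ contains extra copies of $x$ and its tail permutation is no longer consecutive, so the inductive hypothesis cannot be invoked on $w$ directly. I would bridge this gap by establishing a strengthened auxiliary claim: for every word $u\in\mathfrak{S}_{\textbf{m}}$ over $[m]$ and every endpoint $y\in\{1,m\}$, the tail permutation of $H_{\text{\tiny Z}}(u)$ restricted to $[m]\setminus\{y\}$ coincides with that of $u$. Granted the claim, the proposition follows at once: $H_{\text{\tiny Z}}$ preserves the last letter $x\in\{1,m\}$, and the restriction of the tail permutation of $w$ to $[m]\setminus\{x\}$ is exactly the consecutive word $\tau_{1}\cdots\tau_{m-1}$, which by the auxiliary claim equals the corresponding restriction of the tail permutation of $H_{\text{\tiny Z}}(w)$; appending $x$ recovers the full permutation $\tau$.

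The auxiliary claim itself is proved by a separate induction on $|u|$, with a case analysis on the last letter $y$ of $u$. When $y\in\{1,m\}$, $\theta_{\textbf{m}^{y},\textbf{m}_{y}}$ is again the identity and the argument reduces, via $C_{y}^{-1}$, to the inductive hypothesis on a strictly shorter word. The delicate case is $y\in\{2,\ldots,m-1\}$, where $\theta_{\textbf{m}^{y},\textbf{m}_{y}}$ is nontrivial; the decisive structural fact is that the transformation from $\textbf{m}^{y}$ to $\textbf{m}_{y}$ leaves the last entry $k_{y}$ fixed, so it can be realized by a composition of atomic swaps $\theta_{i}$ with $i\le m-2$ only. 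Since each such $\theta_{i}$ permutes only occurrences of the letters $i$ and $i+1$ and leaves the positions of every other letter unchanged, all occurrences of the top letter $m$ in the $C^{y}$-relabeled word are untouched by $\theta_{\textbf{m}^{y},\textbf{m}_{y}}$; carefully transporting this invariance back through $C_{y}^{-1}$ yields the required preservation of the restricted tail permutation, completing the induction.
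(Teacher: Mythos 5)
Your reduction at the top level is sound and matches the paper's own case split: since $\tau$ is consecutive, the last letter $x=\tau_{m}$ lies in $\{1,m\}$, and for these values $\textbf{m}^{x}=\textbf{m}_{x}$, so $\theta_{\textbf{m}^{x},\textbf{m}_{x}}$ is the identity and the recursion reduces to a conjugate of $H_{\text{\tiny Z}}$ on a shorter word. The fatal problem is the auxiliary claim you introduce to handle $k_{x}\ge 2$: it is false. Take $m=4$ and $u=2143$. Unwinding the definition gives $H_{\text{\tiny Z}}(2143)=4213$ (consistent with Han's theorem, since $\text{\footnotesize MAJ}(2143)=4=\text{\footnotesize Z}(4213)$). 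The tail permutation of $2143$ restricted to $[4]\setminus\{1\}=\{2,3,4\}$ is $243$, whereas that of $4213$ is $423$: the relative order of the last occurrences of $2$ and $4$ is reversed. So $H_{\text{\tiny Z}}$ does \emph{not} unconditionally preserve the tail permutation restricted to $[m]\setminus\{y\}$ for $y\in\{1,m\}$, and your induction collapses precisely at the delicate case it was designed to bridge. The structural fact you cite in support --- that $\theta_{\textbf{m}^{y},\textbf{m}_{y}}$ is a composition of $\theta_{i}$ with $i\le m-2$ and therefore fixes the positions of the letter $m$ in the $C^{y}$-relabeled word --- is true but controls only that one letter; it says nothing about how the last occurrences of the other $m-1$ letters migrate under $\theta_{(m-2)!}$, which is where all the difficulty sits.

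What is actually needed is a \emph{conditional} strengthening, which is how the paper proceeds: fix $l\le m$ and a consecutive permutation $\tau$ of $[l]$, and show that the hypothesis $\text{Last}_{A_{r}}(w)=\tau_{r}$ for the prefix sets $A_{r}=\{\tau_{1},\ldots,\tau_{r}\}$ is propagated by $H_{\text{\tiny Z}}$. The consecutivity hypothesis cannot be dropped (your counterexample $2143$ violates it), and the technical heart is Lemma \ref{Lemma-LastA-theta} --- fed by the case analysis of Lemma \ref{Lemma-c_i-belongs} --- which shows that $\text{Last}_{A-1}(\theta_{(m-2)!}(w))=\text{Last}_{A}(w)-1$ for sets $A$ with $1<\min(A)\le\max(A)<m$, and hence that $\text{Last}_{A}(\phi_{x}(w))=\text{Last}_{A}(w)$ whenever $\max(A)<x$ (Lemma \ref{Lemma-LastA}). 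Your sketch contains no substitute for this analysis. A secondary, more repairable issue: in the subcase $k_{x}=1$ you invoke the inductive hypothesis ``on the smaller alphabet,'' but the recursion defining $H_{\text{\tiny Z}}$ keeps the ambient parameter $m$ fixed inside $C^{y}$, $C_{y}$ and $\theta_{(m-2)!}$, so a word supported on $[m-1]$ is not processed by the $(m-1)$-alphabet version of $H_{\text{\tiny Z}}$; the induction has to be formulated for words in $\mathfrak{S}_{\textbf{m}}$ with some multiplicities allowed to be zero, exactly as the paper's strengthened statement does.
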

In the rest of this section, we prove this proposition.
As we will see, this is a somewhat more difficult task.
We first give some notations and lemmas.

For any $j\geq0$, we denote
$$\theta_{j!}=\theta_{j}\circ\cdots\circ\theta_{2}\circ\theta_{1}\circ\theta_{0},\text{~where~}\theta_{0}=\text{id}.$$
Let $S$ be a set,
we denote $S-1:=\{s-1:s\in S\}$.
In general, we denote $S-i:=\{s-i:s\in S\}$.

\begin{lemma}\label{Lemma-c_i-belongs}
Given $w=w_{1}w_{2}\ldots w_{n}\in\mathfrak{S}_{\emph{\textbf{m}}}$,
assume that $\theta_{(m-2)!}(w)=c_{1}c_{2}\ldots c_{n}$.
Let $i\in[n]$,
if $w_{i}=m$, then $c_{i}=m$;
if $w_{i}<m$, then $c_{i}\in R_{i}-1$,
where $R_{i}=\{w_{i-1},w_{i},\ldots,w_{n},m\}$,
and we assume that $w_{0}=m$.
\end{lemma}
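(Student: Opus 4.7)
My plan is to split the lemma into its two assertions and handle them separately. The first, $w_i=m\Rightarrow c_i=m$, is immediate: every factor $\theta_j$ of the composition $\theta_{(m-2)!}=\theta_{m-2}\circ\cdots\circ\theta_1\circ\theta_0$ has $j\leq m-2$, so it acts only on letters in $\{j,j+1\}\subseteq\{1,\ldots,m-1\}$ and leaves every occurrence of $m$ fixed.

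For the second assertion, set $c^{(j)}:=(\theta_j\circ\cdots\circ\theta_0)(w)$ and track the trajectory $w_i=c_i^{(0)},c_i^{(1)},\ldots,c_i^{(m-2)}=c_i$. Two elementary observations drive the argument. First, (O1) $\theta_j$ can change the value at position $i$ only when $c_i^{(j-1)}\in\{j,j+1\}$; inspecting the rewrite $j^a(j+1)^b\mapsto j^b(j+1)^a$ inside every maximal $\{j,j+1\}$-sub-block shows that a $j$ at position $i$ can only stay $j$ or be promoted to $j+1$, while a $j+1$ can only stay $j+1$ or be demoted to $j$. Second, (O2) for every $j\leq v-1$ the operator $\theta_j$ preserves every $(v+1)$-letter because $v+1\notin\{j,j+1\}$, so the set of positions occupied by $(v+1)$-letters is identical in $c^{(v-1)}$ and in $w$. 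A direct consequence of (O1) is that the trajectory decreases at most once — only at step $j=w_i-1$, after which it is frozen — and is non-decreasing thereafter. Hence $c_i$ falls into exactly one of the cases (a) $c_i=w_i-1$; (b) $c_i=v$ for some $v\in\{w_i,\ldots,m-2\}$, reached by a chain of promotions and then fixed at step $j=v$; or (c) $c_i=m-1$. Cases (a) and (c) give $c_i+1\in R_i$ immediately, since $c_i+1=w_i$ or $c_i+1=m$ respectively.

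Case (b) is the substantive one. Since $c_i^{(v-1)}=c_i^{(v)}=v$, the effect of $\theta_v$ on position $i$ forces one of two pictures: either position $i$ lies inside a $(v+1)v$-factor marked by $\sim$ during $\theta_v$, in which case $c_{i-1}^{(v-1)}=v+1$ and a $(v+1)$-letter sits at position $i-1$; or position $i$ sits inside an unmarked maximal $\{v,v+1\}$-sub-block $v^a(v+1)^b$ occupying positions $[p,p+a+b-1]$, with local coordinate $\ell=i-p+1\in[1,a]$. The rewrite $v^a(v+1)^b\mapsto v^b(v+1)^a$ preserves the value $v$ at position $i$ only if $\ell\leq b$, forcing $b\geq 1$ and placing the $(v+1)$-letters of the sub-block at positions $[p+a,p+a+b-1]\subseteq\{i+1,\ldots,n\}$. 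In both subcases $c^{(v-1)}$ carries a $(v+1)$-letter at some position $k\geq i-1$; by (O2) the same position $k$ carries a $(v+1)$-letter in $w$, so $w_k=v+1\in R_i$ and therefore $c_i=v\in R_i-1$, as required.

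The main obstacle is (O1) together with its consequence — the trajectory monotonicity — since one must verify carefully that after the possible drop at step $j=w_i-1$ the value can only stay or rise, and in particular that no further demotion can occur. The positional bookkeeping inside the $\{v,v+1\}$-sub-block in case (b) is then a short inspection, and (O2) is immediate from the definition of $\theta_j$.
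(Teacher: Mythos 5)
Your proof is correct. Both arguments track the value at position $i$ through the successive operators $\theta_{1},\dots,\theta_{m-2}$, but the way the witness for $c_{i}+1\in R_{i}$ is located is genuinely different. The paper fixes the intermediate word $\theta_{(a-1)!}(w)$ (with $a=w_{i}$), looks at the local configuration $p\,aa\cdots a\,q$ around position $i$, and runs a case analysis on $p$ versus $a$ and $a$ versus $q$, with Subcase 1.2 recursing back into Case 1; the witness is whichever neighbouring letter halts the chain of promotions. You instead isolate two structural facts --- the trajectory at position $i$ can drop only once, at step $\theta_{w_{i}-1}$, and is otherwise a chain of promotions that freezes at the first step $\theta_{v}$ at which it is not promoted; and the positions of the $(v+1)$-letters in $c^{(v-1)}$ coincide with those in $w$ since no $\theta_{j}$ with $j\le v-1$ touches the letter $v+1$ --- and then read everything off from that single stabilization step: survival of the value $v$ under $\theta_{v}$ forces a $(v+1)$-letter either at position $i-1$ (the $\sim$-factor case) or strictly to the right of $i$ (the case $\ell\le b$ in the block $v^{a}(v+1)^{b}\mapsto v^{b}(v+1)^{a}$), which pulls back to a letter $v+1=c_i+1$ of $w$ at some position $k\ge i-1$. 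This eliminates the recursion and the $p$-versus-$a$ split and is, to my mind, the cleaner organization; the paper's version, in exchange, identifies more explicitly which of $w_{i-1},w_{i},w_{j},m$ the value $c_{i}+1$ equals. In a full write-up you should spell out the verification of your observation (O1) (including that the $\sim$-marked $(v+1)v$ factors are position-preserving and that a maximal $\{v,v+1\}$-block occupies consecutive positions), since that monotonicity-and-freezing claim is the load-bearing step, but as stated it is a correct and complete plan.
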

\begin{proof}
Let
$$\theta_{j!}(w)=w_{1}^{(j)}w_{2}^{(j)}\ldots w_{n}^{(j)},$$
for $0\leq j\leq m-2$.
So $w_{i}=w_{i}^{(0)}$ and $c_{i}=w_{i}^{(m-2)}$, $1\leq i\leq n$.
Let $w_{0}^{(j)}=w_{n+1}^{(j)}=m$ for all $j$.
Given an index $i\in[n]$, we assume that $w_{i}=a$.
Obviously,
$$w_{i}^{(0)}=w_{i}^{(1)}=w_{i}^{(2)}=\cdots=w_{i}^{(a-2)}=a.$$
Then when $w_{i}=a=m$, we have $c_{i}=w_{i}^{(m-2)}=m$.
Below we assume that  $w_{i}=a<m$.
Then  $a-1\leq m-2$. Consider the word $\theta_{(a-1)!}(w)$.
If $a>1$ and $w_{i}^{(a-1)}=a-1$, then
$$w_{i}^{(a)}=w_{i}^{(a+1)}=\cdots=w_{i}^{(m-2)}=a-1.$$
So $c_{i}=w_{i}^{(m-2)}=a-1=w_{i}-1\in R_{i}-1$.
Below we assume that $w_{i}^{(a-1)}\neq a-1$,
so $w_{i}^{(a-1)}=a$ (including the case of $a=1$).
Assume that $w_{i-1}^{(a-1)}=p$,
and that
\begin{align*}
w_{i}^{(a-1)}=w_{i+1}^{(a-1)}=\cdots=w_{j-1}^{(a-1)}=a,\text{~and~}w_{j}^{(a-1)}=q\neq a,
\end{align*}
where $1\leq i<j\leq n+1$.
The above assumption means that
\begin{align*}
w_{i-1}^{(a-1)}w_{i}^{(a-1)}w_{i+1}^{(a-1)}\ldots w_{j-1}^{(a-1)}w_{j}^{(a-1)}=paa\ldots aq, ~a\neq q.
\end{align*}
We distinguish two cases.

\noindent\textbf{Case 1:~}$p\leq a$.
We further distinguish two subcases.
\begin{itemize}
\item[]
\textbf{Subcase 1.1:~} $a>q$.
It is not hard to see that
$$w_{i}^{(a)}=a+1,~w_{i}^{(a+1)}=a+2,\ldots,~w_{i}^{(m-2)}=m-1.$$
Then $c_{i}=w_{i}^{(m-2)}=m-1\in R_{i}-1$.
\item[]
\textbf{Subcase 1.2:~} $a<q$.
Since $w_{j}^{(a-1)}=q>a$, we have $w_{j}=q$.
It is not hard to see that
\begin{align*}
w_{i}^{(a)}w_{i+1}^{(a)}\ldots w_{j-1}^{(a)}w_{j}^{(a)}&=(a+1)(a+1)\ldots(a+1)q,\\
w_{i}^{(a+1)}w_{i+1}^{(a+1)}\ldots w_{j-1}^{(a+1)}w_{j}^{(a+1)}&=(a+2)(a+2)\ldots(a+2)q,\\
&~~\vdots\\
w_{i}^{(q-2)}w_{i+1}^{(q-2)}\ldots w_{j-1}^{(q-2)}w_{j}^{(q-2)}&=(q-1)(q-1)\ldots(q-1)q.
\end{align*}
If $q=m$, then $c_{i}=w_{i}^{(m-2)}=w_{i}^{(q-2)}=m-1\in
R_{i}-1$. If $q<m$, then $q-1\leq m-2$.
Since $w_{i}^{(q-2)}=q-1$, then $w_{i}^{(q-1)}$ is either $q-1$ or $q$.
If $w_{i}^{(q-1)}=q-1$, then
$$w_{i}^{(q)}=w_{i}^{(q+1)}=\cdots=w_{i}^{(m-2)}=q-1.$$
We have $c_{i}=w_{i}^{(m-2)}=q-1=w_{j}-1\in  R_{i}-1$.
If $w_{i}^{(q-1)}=q$.
Clearly,
$$w_{i}^{(q-1)}w_{i+1}^{(q-1)}\ldots w_{j-1}^{(q-1)}w_{j}^{(q-1)}=qq\ldots q.$$
We now assume that
\begin{align*}
w_{i-1}^{(q-1)}w_{i}^{(q-1)}w_{i+1}^{(q-1)}\ldots w_{k}^{(q-1)}=sqq\ldots qr,~q\neq r,
\end{align*}
where $1\leq i<j<k\leq n+1$.
As $p\leq a$, i.e., $w_{i-1}^{(a-1)}\leq w_{i}^{(a-1)}$, and $\theta_{l}$ fixes the decent set for all $l$, we have $s\leq q$.
Then we return to \textbf{Case 1} and we can give the proof inductively.
\end{itemize}
\noindent\textbf{Case 2:~}$p>a$. We further consider two subcases.
\begin{itemize}
\item[]
\textbf{Subcase 2.1:~} $a>q$.
Since $w_{i-1}^{(a-1)}=p>a$, we see that $w_{i-1}=p$.
It is clear that
$$w_{i}^{(a)}=a+1,~~w_{i}^{(a+1)}=a+2,~\ldots,~~w_{i}^{(p-2)}=w_{i}^{(p-1)}=\cdots=w_{i}^{(m-2)}=p-1.$$
Then $c_{i}=w_{i}^{(m-2)}=p-1=w_{i-1}-1\in R_{i}-1$.
\item[]
\textbf{Subcase 2.2:~} $a<q$.
Combining the arguments of \textbf{Subcase 1.2} and \textbf{Subcase 2.1} gives the proof.
\end{itemize}
We complete the proof.
\end{proof}

Given a word $w=w_{1}w_{2}\ldots w_{n}$  and a set $A$, if $w\cap A\neq\emptyset$, that is
$\{w_{1},w_{2},\ldots, w_{n}\}\cap A\neq\emptyset$,
we denote by $\text{Last}_{A}(w)$ the rightmost letter in $w$ that belongs to $A$.
Obvious that $\text{Last}_{A}(w)\in A$. For example, let $A=\{2,3,5\}$, then
$\text{Last}_{A}(123453441)=3$ and $\text{Last}_{A}(123435441)=5$.
\begin{lemma}\label{Lemma-LastA-theta}
Given a set $A$ with $1<\min(A)\leq\max(A)<m$.
Let $w\in\mathfrak{S}_{\emph{\textbf{m}}}$,
if $w\cap A\neq\emptyset$,
then
$$\emph{Last}_{A-1}\left(\theta_{(m-2)!}(w)\right)=\emph{Last}_{A}(w)-1.$$
\end{lemma}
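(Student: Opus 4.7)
Write $c = c_{1} c_{2} \cdots c_{n} = \theta_{(m-2)!}(w)$, let $x = \text{Last}_{A}(w)$, and let $i$ be the index with $w_{i} = x$ and $w_{k} \notin A$ for all $k > i$. My plan is to combine the positional description of $c$ from Lemma \ref{Lemma-c_i-belongs} with the extremality of $i$: first localize the positions $j$ where $c_{j} \in A - 1$, and then pin down the value of $c_{j}$ at the rightmost such $j$.

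For the localization, Lemma \ref{Lemma-c_i-belongs} gives $c_{j} = m$ when $w_{j} = m$, and $c_{j} \in R_{j} - 1$ with $R_{j} = \{w_{j-1}, w_{j}, \ldots, w_{n}, m\}$ (convention $w_{0} = m$) otherwise. The hypothesis $\max A < m$ means $m \notin A - 1$, so $c_{j} \in A - 1$ forces $R_{j} \cap A \neq \emptyset$; combined with $m \notin A$ and $w_{k} \notin A$ for $k > i$, this yields $j \leq i + 1$. Hence any $j$ contributing to $\text{Last}_{A - 1}(c)$ lies in $\{1, \ldots, i + 1\}$. In the subcase $j = i + 1$ (with $i < n$), we have $R_{i+1} \cap A = \{x\}$, so $c_{i+1} \in A - 1$ forces $c_{i+1} = x - 1$, and since positions $j > i + 1$ are excluded, this is automatically $\text{Last}_{A-1}(c)$.

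The remaining subcase is that the rightmost $j$ with $c_{j} \in A - 1$ lies at some $j \leq i$ (either because $c_{i+1} \notin A - 1$ or because $i = n$). Here I would trace through the case analysis in the proof of Lemma \ref{Lemma-c_i-belongs} --- the easy branch $c_{j} = w_{j} - 1$ together with the branches producing $c_{j} \in \{m - 1,\, q - 1,\, p - 1\}$, where $p = w_{j-1}^{(a-1)}$ and $q = w_{j'}^{(a-1)}$ for a suitable $j' > j$ --- and verify that at the rightmost such $j$, the selected element $c_{j} + 1 \in R_{j}$ must be $x$. The branch $c_{j} = m - 1$ is ruled out since $m \notin A$; for the other branches, any alternative $c_{j} + 1 = y \in A \setminus \{x\}$ would stem from a position strictly to the left of $i$ (either in $w$ or in an intermediate word $\theta_{(a-1)!}(w)$), and running the same recursion starting at positions $j' > j$ would produce a strictly later index with $c_{j'} + 1 = x$, contradicting the rightmost choice of $j$. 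The main obstacle is precisely this bookkeeping: tracking across subcases which specific element of $R_{j}$ is selected as $c_{j} + 1$, and confirming that a non-$x$ selection always leaves an $x - 1$ further to the right in $c$. I expect a short induction on the recursion depth inside the proof of Lemma \ref{Lemma-c_i-belongs}, combined with the extremal property of $i$, to close this final case cleanly.
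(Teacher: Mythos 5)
Your setup and the first two steps are sound and essentially coincide with the paper's argument: using Lemma \ref{Lemma-c_i-belongs} together with $m\notin A$ and the maximality of $i$ (the paper's index $p$), you correctly conclude that $c_{j}\notin A-1$ for all $j\geq i+2$, and that $c_{i+1}\in A-1$ forces $c_{i+1}=x-1$. The gap is in the remaining case, which is exactly where the difficulty of the lemma lives, and which you explicitly leave as a plan (``I would trace through the case analysis\dots'', ``I expect a short induction\dots to close this final case cleanly''). Your proposed contradiction argument --- that if the rightmost $j\leq i$ with $c_{j}\in A-1$ had $c_{j}+1=y\in A\setminus\{x\}$, then ``running the same recursion starting at positions $j'>j$ would produce a strictly later index with $c_{j'}+1=x$'' --- is an unsubstantiated assertion rather than a verified step: Lemma \ref{Lemma-c_i-belongs} only constrains each $c_{j}$ to lie in the set $R_{j}-1$, and it provides no mechanism for producing a position carrying the specific value $x-1$ to the right of an arbitrary spurious occurrence of an element of $A-1$.

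What actually closes this case in the paper is a finer analysis of the intermediate words. Writing $a=x=w_{p}$, $e=\theta_{(a-2)!}(w)$ and $h=\theta_{(a-1)!}(w)$, one checks that either $c_{p}=a-1$, or $c_{p+1}=a-1$, or else there is an index $s<p$ sitting immediately to the left of the maximal run of $a$'s in $h$ ending at position $p$, with $h_{s}=a-1$; since the letter $a-1$ is frozen by every subsequent operator $\theta_{l}$ with $l\geq a$, this yields $c_{s}=a-1$. One must then separately show that $c_{s+1},\dots,c_{p}$ all avoid $A-1$: by Lemma \ref{Lemma-c_i-belongs} each of these lies in $(F-1)\cup(E-1)$ with $F\subseteq\{1,\dots,a\}$, while the fact that $\theta_{l}$ with $l\geq a$ never decreases a letter equal to $a$ forces these entries to be $\geq a$, so they lie in $E-1$, which is disjoint from $A-1$. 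None of this bookkeeping appears in your proposal, and without it the claim that the rightmost element of $A-1$ in $c$ equals $x-1$ is not established. The overall strategy is the right one, but the proof is incomplete at its central point.
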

\begin{proof}
Assume that $w=w_{1}w_{2}\ldots w_{n}$
and $\theta_{(m-2)!}(w)=c_{1}c_{2}\ldots c_{n}$.
Let $B=[m+1]-A$, clearly $(B-1)\cap(A-1)=\emptyset.$
Since $\max(A)<m$, we have $m,m+1\in B$.
Let $w_{p}$ be the rightmost letter in $w$ that belongs to $A$.
Assume that $w_{p}=a$.
Since $a\in A$, we have $2\leq a\leq m-1$.
It is clear that $\text{Last}_{A}(w)=w_{p}=a\in A$ and
$w_{p+1},w_{p+2},\ldots, w_{n}\in B$.
Then
$$E:=\{w_{p+1},w_{p+2},\ldots, w_{n},m,m+1\}\subseteq B.$$
Given an index $i$ with $p+2\leq i\leq n$,
by Lemma \ref{Lemma-c_i-belongs} we have
\begin{align}\label{C_i_in_B_1}
c_{i}\in\{w_{i-1}-1,w_{i}-1,\ldots,w_{n}-1,m-1,m\}\subseteq E-1\subseteq B-1.
\end{align}
For $i=p+1$, by Lemma \ref{Lemma-c_i-belongs} we have
\begin{align}\label{C_i_in_B_2}
c_{p+1}\in\{w_{p}-1\}\cup(E-1)\subseteq\{a-1\}\cup(B-1).
\end{align}
Since $(B-1)\cap(A-1)=\emptyset$, by  (\ref{C_i_in_B_2}) and (\ref{C_i_in_B_1})
we have
\begin{align}
&c_{p+1}=a-1\in A-1 \text{~or~}c_{p+1}\notin A-1,\label{C_i_in_B_3}\\
&c_{p+2},c_{p+3},\ldots,c_{n}\notin A-1.\label{C_i_in_B_4}
\end{align}

Assume that $\theta_{(a-2)!}(w)=e=e_{1}e_{2}\ldots e_{n}$ (note that $a\geq2$).
Clearly $e_{p}=w_{p}=a$.
We now apply the operator $\theta_{a-1}$ to $e$.
If $e_{p}$ becomes $a-1$,
since $a-1$ is unchanged after we apply the operator $\theta_{m-2}\circ\cdots\circ\theta_{a+1}\circ\theta_{a}$,
then  $c_{p}=a-1$,
combining this with (\ref{C_i_in_B_3}) and (\ref{C_i_in_B_4})
we get
$$\text{Last}_{A-1}\left(c_{1}c_{2}\ldots c_{n}\right)=a-1=w_{p}-1=\text{Last}_{A}(w)-1,$$
we complete the proof for this case.
Below we assume that $e_{p}$ is unchanged after we apply the operator $\theta_{a-1}$ to $e$.
We consider two cases.

\noindent\textbf{Case 1:~}$e_{p}e_{p+1}=a(a-1)$.
It is clear that $e_{p+1}$ is unchanged after we apply the operator
$\theta_{m-2}\circ\cdots\circ\theta_{a}\circ\theta_{a-1}$ to $e$.
Therefore, $c_{p+1}=a-1$.
Combining this with (\ref{C_i_in_B_4}),
we have
$$\text{Last}_{A-1}\left(c_{1}c_{2}\ldots c_{n}\right)=c_{p+1}=a-1=w_{p}-1=\text{Last}_{A}(w)-1.$$

\noindent\textbf{Case 2:~}$e_{p+1}\neq a-1$.
Let $\theta_{a-1}(e)=\theta_{(a-1)!}(w)=h=h_{1}h_{2}\ldots h_{n}$.
By our assumption that $e_{p}$ is unchanged after we apply the operator $\theta_{a-1}$ to $e$,
we have $h_{p}=e_{p}=a$.
Because $h_{p}=e_{p}=a$,
there must be an index $q$ with $q<p$ such that $e_{q}e_{q+1}\ldots e_{p}=(a-1)\ldots(a-1)a\ldots a$,
and
$h_{q}h_{q+1}\ldots h_{p}$ also has the form $(a-1)\ldots(a-1)a\ldots a$.
Assume that $h_{s}=a-1$ and $h_{s+1}=\cdots=h_{p}=a$, where $q\leq s< p$.
Since $a-1$ is unchanged after we apply the operator $\theta_{m-2}\circ\cdots\circ\theta_{a+1}\circ\theta_{a}$ to $h$,
then  $c_{s}=h_{s}=a-1$.
Because $h=\theta_{(a-1)!}(w)$ and $h_{s}h_{s+1}\ldots h_{p}=(a-1)a\ldots a$,
we see that
\begin{align}\label{EQ-w_{s+1}}
F:=\{w_{s},w_{s+1},\ldots,w_{p}\}\subseteq\{1,2,\ldots,a\}.
\end{align}
Note that  applying  the operator $\theta_{m-2}\circ\cdots\circ\theta_{a+1}\circ\theta_{a}$ to $h$ will not decrease any letter $a$ in $h$.
Since $h_{s+1}=h_{s+2}=\cdots=h_{p}=a$, we have
\begin{align}\label{EQ-c_{s+1}...<=a}
c_{s+1},c_{s+2},\ldots,c_{p}\geq a.
\end{align}
Given an index $i$ with $s+1\leq i\leq p$, by Lemma \ref{Lemma-c_i-belongs},
we have
\begin{align*}
c_{i}\in\{w_{i-1}-1,w_{i}-1,\ldots,w_{n}-1,m-1,m\}\subseteq (F-1)\cup(E-1).
\end{align*}
By (\ref{EQ-w_{s+1}}) and (\ref{EQ-c_{s+1}...<=a})
we have $c_{i}\notin F-1$,
then $c_{i}\in E-1\subseteq B-1$, where $s+1\leq i\leq p$.
Since $(B-1)\cap(A-1)=\emptyset$, we have
\begin{align}\label{c_s+1...c_p}
c_{s+1},c_{s+2},\ldots,c_{p}\notin A-1.
\end{align}
Combining (\ref{c_s+1...c_p}), (\ref{C_i_in_B_3}), (\ref{C_i_in_B_4}) and the fact that $c_{s}=a-1\in A-1$,
we get
$$\text{Last}_{A-1}\left(c_{1}c_{2}\ldots c_{n}\right)=a-1=w_{p}-1=\text{Last}_{A}(w)-1.$$
We complete the proof.
\end{proof}

Comparing equations (\ref{m^x}) and (\ref{m_x}), we see that
$$\theta_{\textbf{m}^{x},\textbf{m}_{x}}=\theta_{(m-2)!}^{m-x}:=\underbrace{\theta_{(m-2)!}\circ\theta_{(m-2)!}\circ\cdots\circ\theta_{(m-2)!}}_{m-x}.$$
We define the map $\phi_{x}:=\theta_{\textbf{m}^{x},\textbf{m}_{x}}\circ C^{x}=\theta_{(m-2)!}^{m-x}\circ C^{x}$.
Then
\begin{align*}
H_{\text{\tiny Z}}(wx)=\left(C_{x}^{-1}\circ H_{\text{\tiny Z}}\circ\phi_{x}(w)\right)x.
\end{align*}
\begin{lemma}\label{Lemma-LastA}
Given a set $A$ and a number $x$ with $1\leq\min(A)\leq\max(A)<x\leq m$.
Let $w\in\mathfrak{S}_{\emph{\textbf{m}}}$,
if $w\cap A\neq\emptyset$,
then
$$\emph{Last}_{A}(\phi_{x}(w))=\emph{Last}_{A}(w).$$
\end{lemma}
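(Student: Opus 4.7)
The plan is to reduce the assertion to $m-x$ successive applications of Lemma~\ref{Lemma-LastA-theta}. First dispose of the trivial case $x=m$: here $C^{m}$ is the identity substitution and the exponent $m-x=0$ makes $\theta_{(m-2)!}^{0}=\mathrm{id}$, so $\phi_{m}=\mathrm{id}$ and the claim is vacuous. Assume henceforth $x<m$.

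The key observation is that $C^{x}$ acts as a position-preserving letter substitution sending $y\mapsto y+m-x$ on letters $y\leq x$ and $y\mapsto y-x$ on letters $y>x$. Since $\max(A)<x$, every $a\in A$ lies in the first range, and $C^{x}$ bijects $A$ onto $A_{0}:=A+(m-x)\subseteq\{m-x+1,\ldots,m-1\}$ via $a\mapsto a+m-x$. The complement behaves well: letters $\leq x$ outside $A$ are sent into $\{m-x+1,\ldots,m\}\setminus A_{0}$, while letters $>x$ are sent into $\{1,\ldots,m-x\}$, disjoint from $A_{0}$. Consequently the rightmost position $p$ of an $A$-letter in $w$ is exactly the rightmost position of an $A_{0}$-letter in $C^{x}(w)$, so
\[
\mathrm{Last}_{A_{0}}(C^{x}(w))=\mathrm{Last}_{A}(w)+(m-x).
\]

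Next, set $A_{k}:=A_{0}-k=A+(m-x-k)$ and $w^{(k)}:=\theta_{(m-2)!}^{\,k}(C^{x}(w))$ for $0\leq k\leq m-x$. I plan to verify by induction on $k$ that Lemma~\ref{Lemma-LastA-theta} applies to the pair $(A_{k},w^{(k)})$ whenever $0\leq k\leq m-x-1$. Indeed,
\[
\min(A_{k})=\min(A)+(m-x-k)\geq\min(A)+1\geq 2>1,
\]
\[
\max(A_{k})=\max(A)+(m-x-k)\leq\max(A)+(m-x)\leq(x-1)+(m-x)=m-1<m,
\]
and $w^{(k)}\cap A_{k}\neq\emptyset$ by the inductive existence of $\mathrm{Last}_{A_{k}}(w^{(k)})$. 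Lemma~\ref{Lemma-LastA-theta} then yields $\mathrm{Last}_{A_{k+1}}(w^{(k+1)})=\mathrm{Last}_{A_{k}}(w^{(k)})-1$. Telescoping these $m-x$ equalities, using $A_{m-x}=A$ and $w^{(m-x)}=\phi_{x}(w)$, and combining with the displayed identity above gives $\mathrm{Last}_{A}(\phi_{x}(w))=\mathrm{Last}_{A}(w)$.

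The main (in fact only) obstacle is the bookkeeping of the hypotheses $1<\min(A_{k})\leq\max(A_{k})<m$ and $w^{(k)}\cap A_{k}\neq\emptyset$ at every stage; both reduce to the standing inequalities $\min(A)\geq 1$ and $\max(A)\leq x-1\leq m-2$, together with the fact that the $\mathrm{Last}$ operator witnesses non-emptiness and propagates inductively through Lemma~\ref{Lemma-LastA-theta}. No combinatorial input beyond Lemma~\ref{Lemma-LastA-theta} is required.
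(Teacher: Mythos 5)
Your proof is correct and follows essentially the same route as the paper's: establish $\mathrm{Last}_{A+d}(C^{x}(w))=\mathrm{Last}_{A}(w)+d$ with $d=m-x$ from the definition of $C^{x}$, then apply Lemma~\ref{Lemma-LastA-theta} $d$ times and telescope, after checking $1<\min(A+j)\leq\max(A+j)<m$ for $1\leq j\leq d$. Your extra attention to the $x=m$ degenerate case and to the propagation of non-emptiness through the iteration is sound but not a different argument.
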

\begin{proof}
Let $m-x=d$. Since $\max(A)<x$, by the definition of $C^{x}$ we see that
\begin{align}\label{eq-C^x}
\text{Last}_{A+d}(C^{x}(w))=\text{Last}_{A}(w)+d.
\end{align}
Since $\max(A)<x$, we have $\max(A)+d<m$.
Thus, $1<\min(A+j)\leq\max(A+j)<m$ for any $1\leq j\leq d$.
By Lemma \ref{Lemma-LastA-theta}, we have
\begin{align*}
\text{Last}_{A}\left(\theta_{(m-2)!}^{d}\circ C^{x}(w)\right)
&=\text{Last}_{A+1}\left(\theta_{(m-2)!}^{d-1}\circ C^{x}(w)\right)-1\\
&=\text{Last}_{A+2}\left(\theta_{(m-2)!}^{d-2}\circ C^{x}(w)\right)-2\\
&~~~\vdots\\
&=\text{Last}_{A+d-1}\left(\theta_{(m-2)!}^{1}\circ C^{x}(w)\right)-(d-1)\\
&=\text{Last}_{A+d}\left(C^{x}(w)\right)-d.
\end{align*}
Combining this with (\ref{eq-C^x}) yields
\begin{align*}
\text{Last}_{A}\left(\phi_{x}(w)\right)=\text{Last}_{A}(w),
\end{align*}
completing the proof.
\end{proof}

\noindent\emph{{Proof of Proposition \ref{Prop-Han-Z}.}}
We prove the following stronger property.
Given $w\in\mathfrak{S}_{\textbf{m}}$.
For any $l$ with $1\leq l\leq m$,
assume that $\tau=\tau_{1}\tau_{2}\ldots\tau_{l}$ is a consecutive permutation of $[l]$.
Denote
$$A_{r}=\{\tau_{1},\tau_{2},\ldots,\tau_{r}\},~1\leq r\leq l.$$
If
\begin{align}\label{eq-stronger-if}
\text{Last}_{A_{r}}\left(w\right)=\tau_{r}, \text{~for~any~} 1\leq r\leq l,
\end{align}
then
\begin{align}\label{eq-stronger-then}
\text{Last}_{A_{r}}\left(H_{\text{\tiny Z}}(w)\right)=\tau_{r}, \text{~for~any~} 1\leq r\leq l.
\end{align}
To see the above property implies Proposition \ref{Prop-Han-Z},
note that taking $k_{i}\geq1$ for $1\leq i\leq m$ in $\textbf{m}=(k_{1},k_{2},\ldots,k_{m})$, we have $\textbf{m}=M$.
Then taking $l=m$, (\ref{eq-stronger-if}) means that $w\in\mathfrak{S}_{M}^{\tau}$,
and (\ref{eq-stronger-then}) means that $H_{\text{\tiny Z}}(w)\in\mathfrak{S}_{M}^{\tau}$.
Thus, the above property implies Proposition \ref{Prop-Han-Z}.

Below we use induction on $n$, which is the length of $w$, to prove the above  property.
The initial case of $n=1$ is obvious.
Assume the above  property is true for $n-1$ and prove it for $n$.
Let $w=w_{1}w_{2}\ldots w_{n}$
and $w^{\prime}=w_{1}w_{2}\ldots w_{n-1}$.
By definition we see that
\begin{align}\label{Eq-Hz-wn}
H_{\text{\tiny Z}}(w)=\left(C_{w_{n}}^{-1}\circ H_{\text{\tiny Z}}\circ\phi_{w_{n}}(w^{\prime})\right)w_{n}.
\end{align}
We first consider the case that $w_{n}\in A_{l}$.
Then $\text{Last}_{A_{l}}\left(w\right)=w_{n}$.
By (\ref{eq-stronger-if}) we see that $w_{n}=\tau_{l}$.
Since $\tau$ is a consecutive permutation of $[l]$,
then $\tau_{l}=l$ or $\tau_{l}=1$.
We distinguish two cases.

\noindent\textbf{Case 1:}~$\tau_{l}=l$.
From (\ref{Eq-Hz-wn}) we have
\begin{align}\label{eq-case1}
H_{\text{\tiny Z}}(w)=\left(C_{l}^{-1}\circ H_{\text{\tiny Z}}\circ\phi_{l}(w^{\prime})\right)l.
\end{align}
Given $r$ with $1\leq r\leq l-1$.
Obviously, $1\leq\min(A_{r})\leq\max(A_{r})<l\leq m$.
Since $w\cap A_{r}\neq\emptyset$ and $l\notin A_{r}$, then $w^{\prime}\cap A_{r}\neq\emptyset$.
By Lemma \ref{Lemma-LastA}, we get
\begin{align}\label{eq-1}
\text{Last}_{A_{r}}(\phi_{l}(w^{\prime}))=\text{Last}_{A_{r}}(w^{\prime})=\text{Last}_{A_{r}}(w)=\tau_{r}, \text{~for~} 1\leq r\leq l-1.
\end{align}
Note that $\phi_{l}(w^{\prime})$ is of length $n-1$,
we apply the induction hypothesis to $\phi_{l}(w^{\prime})$,
with taking $l^{\prime}=l-1$ and $\tau^{\prime}=\tau_{1}\tau_{2}\ldots\tau_{l-1}$.
Clearly, $\tau^{\prime}$ is a consecutive permutation of $[l-1]$ as $\tau_{l}=l$.
By (\ref{eq-1}), we see that $\phi_{l}(w^{\prime})$ and $\tau^{\prime}$ satisfy the condition (\ref{eq-stronger-if}),
then
\begin{align}\label{eq-2}
\text{Last}_{A_{r}}\left(H_{\text{\tiny Z}}\circ\phi_{l}(w^{\prime})\right)
=\tau_{r}, \text{~for~}1\leq r\leq l-1.
\end{align}
Since $\max(A_{r})<l$, we see that
\begin{align}\label{eq-3}
\text{Last}_{A_{r}}\left(C_{l}^{-1}\circ H_{\text{\tiny Z}}\circ\phi_{l}(w^{\prime})\right)
=\text{Last}_{A_{r}}\left(H_{\text{\tiny Z}}\circ\phi_{l}(w^{\prime})\right)=\tau_{r}, \text{~for~}1\leq r\leq l-1.
\end{align}
Combining (\ref{eq-3}) with (\ref{eq-case1}) yields
\begin{align*}
\text{Last}_{A_{r}}\left(H_{\text{\tiny Z}}(w)\right)=\tau_{r},
\text{~for~}1\leq r\leq l-1.
\end{align*}
Since $A_{l}=\{1,2,\ldots,l\}$, by (\ref{eq-case1}) we have
\begin{align*}
\text{Last}_{A_{l}}\left(H_{\text{\tiny Z}}(w)\right)=l=\tau_{l}.
\end{align*}
Thus, (\ref{eq-stronger-then}) holds for $1\leq r\leq l$,
and we complete the proof for this case.

\noindent\textbf{Case 2:}~$\tau_{l}=1$.
In this case we have
\begin{align}\label{eq-case2}
H_{\text{\tiny Z}}(w)=\left(C_{1}^{-1}\circ H_{\text{\tiny Z}}\circ\phi_{1}(w^{\prime})\right)1.
\end{align}
Clearly
$\theta_{\textbf{m}^{1},\textbf{m}_{1}}=\text{id}$,
then $\phi_{1}(w^{\prime})=C^{1}(w^{\prime})=d_{1}d_{2}\ldots d_{n-1}$,
where
\begin{align}\label{Eq-d_i}
d_{i}=\left
\{
\begin{aligned}
&w_{i}-1, \quad\quad\text{if~}w_{i}>1, \\
&m,\quad\quad\quad\quad\text{if~}w_{i}=1.
\end{aligned}
\right.
\end{align}
Let $\tau^{\prime}=\tau_{1}^{\prime}\tau_{2}^{\prime}\ldots\tau_{l-1}^{\prime}$,
where $\tau_{i}^{\prime}=\tau_{i}-1$.
Since $\tau=\tau_{1}\tau_{2}\ldots\tau_{l}$ is a consecutive permutation of $[l]$ and $\tau_{l}=1$,
we see that $\tau^{\prime}$ is a consecutive permutation of $[l-1]$.
For $1\leq r\leq l-1$, let
$$A_{r}^{\prime}=\{\tau_{1}^{\prime},\tau_{2}^{\prime},\ldots,\tau_{r}^{\prime}\}=A_{r}-1.$$
Note that $\min(A_{r})>1$ for $1\leq r\leq l-1$, by (\ref{Eq-d_i}) we see that
\begin{align}\label{eq-4}
\text{Last}_{A_{r}^{\prime}}\left(\phi_{1}(w^{\prime})\right)=
\text{Last}_{A_{r}}\left(w^{\prime}\right)-1=\tau_{r}-1=\tau^{\prime}_{r}, \text{~for~}1\leq r\leq l-1.
\end{align}
Applying the induction hypothesis to $\phi_{1}(w^{\prime})$,
then
\begin{align}\label{eq-5}
\text{Last}_{A_{r}^{\prime}}\left(H_{\text{\tiny Z}}\circ\phi_{1}(w^{\prime})\right)
=\tau^{\prime}_{r}, \text{~for~}1\leq r\leq l-1.
\end{align}
It is not hard to see that
\begin{align}\label{eq-6}
\text{Last}_{A_{r}}\left(C_{1}^{-1}\circ H_{\text{\tiny Z}}\circ\phi_{1}(w^{\prime})\right)
=\text{Last}_{A_{r}^{\prime}}\left(H_{\text{\tiny Z}}\circ\phi_{1}(w^{\prime})\right)+1.
\end{align}
Then we obtain
\begin{align}\label{eq-6}
\text{Last}_{A_{r}}\left(C_{1}^{-1}\circ H_{\text{\tiny Z}}\circ\phi_{1}(w^{\prime})\right)
=\tau^{\prime}_{r}+1=\tau_{r}, \text{~for~}1\leq r\leq l-1.
\end{align}
Note that $1\notin A_{r}$ for $1\leq r\leq l-1$,
combining (\ref{eq-6}) with (\ref{eq-case2}), we have
\begin{align*}
\text{Last}_{A_{r}}\left(H_{\text{\tiny Z}}(w)\right)=\tau_{r}, \text{~for~}1\leq r\leq l-1.
\end{align*}
Clearly, $A_{l}=\{1,2,\ldots,l\}$, by (\ref{eq-case2}) we have
\begin{align*}
\text{Last}_{A_{l}}\left(H_{\text{\tiny Z}}(w)\right)=1=\tau_{l}.
\end{align*}
Thus, (\ref{eq-stronger-then}) holds for $1\leq r\leq l$,
completing the proof  for this case.

We now consider the case that $w_{n}\notin A_{l}$.
Since $A_{l}=[l]$ and $w_{n}\notin A_{l}$, we have $l<w_{n}\leq m$.
Then for any $r$, $1\leq r\leq l$, we have $1\leq\min(A_{r})\leq\max(A_{r})<w_{n}\leq m$.
The proof for this case is very similar to that of \textbf{Case 1}, and we omit it.
\hfill{$\mbox{\rule[0pt]{0.7ex}{1.3ex}}$}

\section{Equidistribution of (mstc,{\large INV}) and (des,{\large MAJ}) on $\mathcal{P}_{M}$}\label{section-bi-1}
The permutation statistic stc was introduced by Skandera \cite{Skandera-2001},
he proved that (stc, {\footnotesize INV}) is Euler-Mahonian.
Carnevale \cite{Carnevale-2017} extended  the  statistic stc to words, which he called mstc,
and proved that (mstc, {\footnotesize INV}) is Euler-Mahonian,
that is (mstc, {\footnotesize INV}) and (des, {\footnotesize MAJ})  are equidistributed on words.
In this section,
we prove that  (mstc, {\footnotesize INV}) and (des, {\footnotesize MAJ}) are equidistributed when restricted to $\mathcal{P}_{M}$.
Here we give an equivalent definition of the statistic mstc.

First, we define the map $\text{std}:\mathfrak{S}_{M}\rightarrow \mathfrak{S}_{n}$.
Given $w=w_{1}w_{2}\ldots w_{n}\in\mathfrak{S}_{M}$,
define std$(w)$ to be the permutation $\pi_{1}\pi_{2}\ldots \pi_{n}\in\mathfrak{S}_{n}$  such that
$\pi_{i}<\pi_{j}$ if and only if either $w_{i}<w_{j}$ or $w_{i}=w_{j}$ with $i<j$.
For example,
std$(32112133)=64125378$.
Clearly, $\text{\footnotesize INV}(\text{std}(w))=\text{\footnotesize INV}(w)$.

Second, we define the map $I:\mathfrak{S}_{n}\rightarrow \text{\textbf{I}}_{n}$, where
$\text{\textbf{I}}_{n}:=\{(c_{1},c_{2},\ldots,c_{n}): 0\leq c_{i}\leq i-1\}.$
Given $\pi\in\mathfrak{S}_{n}$, for $i\in[n]$,
let $c_{i}$  be  the number of letters $j$ to the right of the letter $i$ in $\pi$ such that $j<i$.
Then define
$$I(\pi)=(c_{1},c_{2},\ldots,c_{n}).$$
Clearly, {\footnotesize INV}$(\pi)=\sum_{i=1}^{n}c_{i}$.
For example, let $\pi=64125378$, then $I(\pi)=(0,0,0,3,1,5,0,0)$ and {\footnotesize INV}$(\pi)=3+1+5=9$.

Third, we give the definition of the statistic eul  on $\text{\textbf{I}}_{n}$,
see \cite{Han-1990-1}.
Let $c=(c_{1},c_{2},\ldots,c_{n})\in \text{\textbf{I}}_{n}$.
If $n=1$, define $\text{eul}(c)=0$;
if $n\geq2$, let $c^{\prime}=(c_{1},c_{2},\ldots, c_{n-1})$,
then define
\begin{equation*}
\text{eul}(c)=\left\{
\begin{aligned}
&\text{eul}(c^{\prime}), \quad\quad\text{if~}c_{n}\leq \text{eul}(c^{\prime}), \\
&\text{eul}(c^{\prime})+1, ~\text{if~}c_{n}> \text{eul}(c^{\prime}).
\end{aligned}
\right.
\end{equation*}

Finally, for $w\in\mathfrak{S}_{M}$,
define
\begin{align}\label{definition-mstc-1}
\text{mstc}(w)=\text{eul}\circ I\circ\text{std}(w).
\end{align}

\begin{remark}
\emph{Let $\textbf{\text{i}}(\pi)=\pi^{-1}$. Originally, Carnevale \cite{Carnevale-2017} defined the statistic mstc to be
\begin{align}\label{definition-mstc-2}
\text{mstc}(w)=\text{stc}\circ\textbf{\text{i}}\circ\text{std}(w).
\end{align}
In \cite{Liu-2021} (at the end of Section 7),
the author proved that for any permutation $\pi$,
$$\text{eul}\circ I(\pi)=\text{stc}\circ\textbf{\text{i}}(\pi).$$
Then
$$\text{eul}\circ I\circ \text{std}(w)=\text{stc}\circ\textbf{\text{i}}\circ \text{std}(w),$$
which shows the equivalence of (\ref{definition-mstc-1}) and (\ref{definition-mstc-2}).}
\end{remark}

We now review the bijection $\Psi$ on permutations that takes {\footnotesize INV} to {\footnotesize MAJ}, which is essentially due to Carlitz \cite{Carlitz-1975}.
Also see \cite{Remmel-2015}.
See \cite{Liu-2021} for a $k$-extension of this bijection.

We will recursively define the bijection $\Psi$.
Given a permutation $\pi\in\mathfrak{S}_{n}$, assume that $I(\pi)=(c_{1},c_{2},\ldots,c_{n})$.
Let $\pi^{\prime}\in\mathfrak{S}_{n-1}$ be the permutation obtained from $\pi$ by deleting the letter $n$.
Assume that $\Psi(\pi^{\prime})\in\mathfrak{S}_{n-1}$ has been defined,
we will define $\Psi(\pi)$  from $\Psi(\pi^{\prime})$  by inserting the letter $n$.
There are $n$ positions where we can insert $n$ in $\Psi(\pi^{\prime})$ to obtain a permutation $\Psi(\pi)$.
We first label the positions of $\Psi(\pi^{\prime})$ according to the following rules:
\begin{itemize}
\item[(a)] Label the  position after $\Psi(\pi^{\prime})$ with 0.
\item[(b)] Label the positions following the descents of $\Psi(\pi^{\prime})$ from right to left with $1, 2, \ldots, \text{des}(\Psi(\pi^{\prime}))$.
\item[(c)] Label the remaining  positions from left to right with $\text{des}(\Psi(\pi^{\prime}))+1, \ldots, n-1$.
\end{itemize}
Finally, we insert the letter $n$ into the position labeled by $c_{n}$,
let $\Psi(\pi)$ be the resulting permutation.

For example,
if $\Psi(\pi^{\prime})=64125378$,
we can write the labeling of the positions of $\Psi(\pi^{\prime})$ as subscripts to get
$$_{4}6_{3}4_{2}1_{5}2_{6}5_{1}3_{7}7_{8}8_{0}.$$
If $c_{9}=6$, then $\Psi(\pi)=641295378$.

Given a multiset $M=\{1^{k_{1}},2^{k_{2}},\ldots,m^{k_{m}}\}$ with $|M|=n$,
we extend $\Psi$ to $\mathfrak{S}_{M}$, which we denote $\Psi_{M}$.
First, we define a map $\text{istd}_{M}:\mathfrak{S}_{n}\rightarrow\mathfrak{S}_{M}$.
Given $\pi\in\mathfrak{S}_{n}$,
$\text{istd}_{M}(\pi)\in\mathfrak{S}_{M}$ is obtained from $\pi$ by
replacing the letters $1,2,\ldots,k_{1}$ with $1$,
replacing the letters $k_{1}+1,k_{1}+2,\ldots,k_{1}+k_{2}$ with $2$,
\ldots,
replacing the letters $1+\sum_{i=1}^{m-1}k_{i},\ldots,\sum_{i=1}^{m}k_{i}$ with $m$.
Clearly for any $w\in\mathfrak{S}_{M}$, we have
$$\text{istd}_{M}\circ\text{std}(w)=w.$$
For $w\in\mathfrak{S}_{M}$,
we define
\begin{align}\label{definition Psi M}
\Psi_{M}(w)=\text{istd}_{M}\circ\Psi\circ\text{std}(w).
\end{align}
See \cite{Wilson-2016} (Section 2.2) for another equivalent description of $\Psi_{M}$.
\begin{example}
\emph{Let $M=\{1^{2},2^{2},3^{2}\}$ and let $w=213123\in\mathfrak{S}_{M}$.
So $\pi=\text{std}(w)=315246$. Clearly, $I(\pi)=(c_{1},c_{2},c_{3},c_{4},c_{5},c_{6})=(0,0,2,0,2,0)$.
We give  the procedure for creating $\Psi(\pi)$:
\begin{align*}
_{1}1_{0}~\xrightarrow{c_{2}=0}~_{1}1_{2}2_{0}~\xrightarrow{c_{3}=2}~_{2}1_{3}3_{1}2_{0}~\xrightarrow{c_{4}=0}~_{2}1_{3}3_{1}2_{4}4_{0}
~\xrightarrow{c_{5}=2}~_{3}5_{2}1_{4}3_{1}2_{5}4_{0}~\xrightarrow{c_{6}=0}~513246=\Psi(\pi).
\end{align*}
Then $\Psi_{M}(w)=\text{istd}_{M}\circ\Psi(\pi)=\text{istd}_{M}(513246)=312123$.}
\end{example}

Our next goal is to prove the following property of $\Psi_{M}$.
\begin{proposition}\label{Prop Psi_M}
Let $M=\{1^{k_{1}},2^{k_{2}},\ldots,m^{k_{m}}\}$ with $k_{i}\geq1$ for all $i\in[m]$.
For any $w\in\mathfrak{S}_{M}$, we have
$$
(\emph{mstc}, \emph{\footnotesize INV})w=(\emph{des}, \emph{\footnotesize MAJ})\Psi_{M}(w).
$$
\end{proposition}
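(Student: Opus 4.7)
The plan is to factor $\Psi_M=\text{istd}_M\circ\Psi\circ\text{std}$ and track $(\emph{mstc},\emph{\footnotesize INV})$ through each layer. Because $\emph{\footnotesize INV}(w)=\emph{\footnotesize INV}(\text{std}(w))$ and $\emph{mstc}(w)=\emph{eul}\circ I\circ\text{std}(w)$ by definition, the proposition reduces to two claims: (a) the permutation identity $(\emph{eul}\circ I,\emph{\footnotesize INV})(\pi)=(\emph{des},\emph{\footnotesize MAJ})(\Psi(\pi))$ for every $\pi\in\mathfrak{S}_{n}$, and (b) the compatibility $(\emph{des},\emph{\footnotesize MAJ})(\Psi(\text{std}(w)))=(\emph{des},\emph{\footnotesize MAJ})(\Psi_{M}(w))$, i.e.\ the statistics on the right are unchanged by the projection $\text{istd}_M$ applied to $\Psi(\text{std}(w))$.

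For (a), which is essentially the classical Carlitz bijection, I would induct on $n$ by analyzing the insertion of $n$ into $\Psi(\pi')$ at the label $c_n=I(\pi)_n$. The labeling scheme is arranged so that this insertion increases $\emph{\footnotesize MAJ}$ by exactly $c_n$, matching $\emph{\footnotesize INV}(\pi)-\emph{\footnotesize INV}(\pi')=c_n$; labels $0,1,\dots,\emph{des}(\Psi(\pi'))$ sit at slots that do not create a new descent, while the remaining labels do, and this dichotomy mirrors the recursion defining $\emph{eul}$, so $\emph{eul}(I(\pi))=\emph{des}(\Psi(\pi))$. For (b), observe that $\text{istd}_M$ never introduces a descent (entries in different blocks keep their relative order, while entries in the same block become equal) and destroys precisely those descents whose two adjacent entries lie in a common block of $M$. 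Hence $(\emph{des},\emph{\footnotesize MAJ})$ is preserved by $\text{istd}_M$ on $\sigma$ exactly when $\sigma$ has no \emph{intra-block descent}: whenever $\sigma_i>\sigma_{i+1}$, the two values lie in different blocks. Since $\text{std}(w)$ is always \emph{block-increasing} (inside each block $B_j$ the values appear from left to right in increasing order), claim (b) reduces to the key lemma: if $\pi\in\mathfrak{S}_n$ is block-increasing with respect to $B_1,\dots,B_m$, then $\Psi(\pi)$ has no intra-block descent.

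I would prove the key lemma by induction on $n$. Delete $n$ from $\pi$ to obtain $\pi'$; by block-increasingness $n$ is the rightmost $B_m$-entry of $\pi$, so $\pi'$ is block-increasing with respect to the modified blocks $B_1,\dots,B_{m-1},B_m\setminus\{n\}$ and, by the inductive hypothesis, $\Psi(\pi')$ has no intra-block descent for these blocks. Inserting $n$ into $\Psi(\pi')$ at the position $p$ labelled $c_n$ creates two new adjacencies: the left one cannot produce a descent (since $n$ is maximal), so the only danger is that $\Psi(\pi')_p\in B_m\setminus\{n\}$, in which case $n$ is followed by a smaller $B_m$-element. The main obstacle is ruling this out. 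The strategy is as follows: since $n$ is the rightmost $B_m$-entry of $\pi$, $c_n=n-\mathrm{pos}(n)$ counts exactly the non-$B_m$ entries lying to the right of $n$ in $\pi$; I then match this count to the labelling of $\Psi(\pi')$ and show, exploiting the inductive no-intra-block-descent property to locate every $B_m\setminus\{n\}$-entry within the descent/ascent pattern of $\Psi(\pi')$ (each such entry is either preceded by a larger non-$B_m$ entry, sitting right after a descent, or preceded by a smaller element, contributing a "high" non-descent label), that the label $c_n$ is strictly smaller than every label attached to a position of $\Psi(\pi')$ whose entry lies in $B_m\setminus\{n\}$. This forces $\Psi(\pi')_p\notin B_m\setminus\{n\}$ and completes the induction, yielding the proposition.
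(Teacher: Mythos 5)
Your reduction is the same as the paper's: factor $\Psi_{M}=\text{istd}_{M}\circ\Psi\circ\text{std}$, use the permutation identity $(\text{eul}\circ I,\text{\footnotesize INV})\pi=(\text{des},\text{\footnotesize MAJ})\Psi(\pi)$ (the paper's Lemma \ref{lemma Psi permutation}, quoted from \cite{Liu-2021}), and then show that applying $\text{istd}_{M}$ to $\Psi(\text{std}(w))$ does not change the descent set. Your observation that this last step amounts to proving $\Psi(\text{std}(w))$ has no ``intra-block descent'' is exactly the content of the paper's Lemma \ref{lemma istd_M}. Up to this point the proposal is correct and parallels the paper.

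The gap is in your key lemma, which is where all the real content sits. You reduce it, by induction on $n$, to the claim that the insertion label $c_{n}$ is \emph{strictly smaller} than the label of the gap preceding every entry of $B_{m}\setminus\{n\}$ in $\Psi(\pi^{\prime})$, but you do not prove this inequality; you only describe a strategy for it. Moreover the case analysis you offer in support is partly vacuous: an entry of $B_{m}\setminus\{n\}$ can never be ``preceded by a larger non-$B_{m}$ entry,'' because $B_{m}$ consists of the $k_{m}$ largest values of $\pi$, so every non-$B_{m}$ entry is smaller than every $B_{m}$ entry. (The correct conclusion of that analysis is that the gap before any $B_{m}\setminus\{n\}$ entry is always an ascent gap or the initial gap, hence carries a label $\geq\text{des}(\Psi(\pi^{\prime}))+1$; but that alone does not give your inequality when $c_{n}>\text{des}(\Psi(\pi^{\prime}))$.) The paper avoids the global label comparison entirely: it first notes that within a block the codes satisfy $c_{a+1}\geq c_{a+2}\geq\cdots\geq c_{a+k_{i}}$, and then proves (Lemmas \ref{Prop-Liu-bi-statistic} and \ref{Prop-Liu-bi-statistic-s}) that whenever $c_{i}\geq c_{i+1}\geq\cdots\geq c_{i+s}$, the letter $i+s$ cannot be immediately followed by $i$ in $\Psi(\pi)$ --- a local statement verified by tracking just the two relevant insertions, where one shows that landing immediately before $i$ would force $c_{i+1}=c_{i}+1$. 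Your inequality appears to be true (it is a strengthening of what the paper's Lemma \ref{Prop-Liu-bi-statistic-s} delivers), but as written it is an assertion, not a proof, and establishing it is essentially the same difficulty as the lemma you are trying to bypass. To complete your argument you would need either to prove the label comparison honestly or to fall back on the monotone-codes lemma.
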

To prove Proposition \ref{Prop Psi_M}, we need some lemmas.
\begin{lemma}\label{Prop-Liu-bi-statistic}
Given $\pi\in\mathfrak{S}_{n}$, let $I(\pi)=(c_{1},c_{2},\ldots,c_{n})$.
For any $1\leq i\leq n-1$,
if $c_{i}\geq c_{i+1}$,
then in permutation $\Psi(\pi)$, the letter $i+1$ is not immediately followed by the letter $i$.
\end{lemma}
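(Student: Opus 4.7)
The plan is to track the positions of the letters $i$ and $i+1$ through the recursive insertion procedure defining $\Psi$. Let $\sigma_k\in\mathfrak{S}_k$ denote the permutation obtained after inserting the letters $1,2,\ldots,k$, so that $\sigma_k$ is built from $\sigma_{k-1}$ by placing the letter $k$ at the position of $\sigma_{k-1}$ labeled $c_k$, and $\sigma_n=\Psi(\pi)$. Since every insertion after step $i+1$ places a letter strictly larger than both $i$ and $i+1$, no such insertion can create a new adjacency of the form ``$i+1$ immediately before $i$''; later insertions can only destroy such an adjacency (by wedging a bigger letter between them). Thus $i+1$ is immediately followed by $i$ in $\Psi(\pi)$ only if this already holds in $\sigma_{i+1}$, and it suffices to show that under the hypothesis $c_i\geq c_{i+1}$, the letter $i+1$ is not placed immediately before $i$ when inserted into $\sigma_i$.

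The letter $i+1$ lands immediately before $i$ in $\sigma_{i+1}$ exactly when $c_{i+1}$ equals the label $\ell_i$ of the position of $\sigma_i$ lying immediately to the left of $i$. The heart of the proof is the inequality $\ell_i>c_i$, which combined with $c_{i+1}\leq c_i$ yields $c_{i+1}<\ell_i$ and hence $c_{i+1}\neq\ell_i$. Note first that because $i$ is the maximum letter of $\sigma_i$, the position just to the left of $i$ is always a non-descent position (or the leftmost position of $\sigma_i$, if $i$ is the first letter), so its label lies in the non-descent range $\{d(\sigma_i)+1,\ldots,i\}$ under the labelling rules applied to $\sigma_i$ when inserting $i+1$. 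I would then establish $\ell_i>c_i$ by a case analysis on the type of position of $\sigma_{i-1}$ at which $i$ was inserted, writing $d=\mathrm{des}(\sigma_{i-1})$.

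If $c_i=0$, then $i$ is appended to the end of $\sigma_{i-1}$; the descent set is unchanged, and the newly created position between the former last letter and $i$ is the rightmost non-descent position of $\sigma_i$ other than the new end position, so it receives the largest non-descent label $\ell_i=i>0$. If $d+1\leq c_i\leq i-1$, then $c_i=d+r$ labels the $r$-th non-descent position of $\sigma_{i-1}$ from the left; inserting $i$ creates exactly one new descent (just after $i$), so $d(\sigma_i)=d+1$, while the position immediately before $i$ remains the $r$-th non-descent position of $\sigma_i$ from the left and therefore has label $(d+1)+r=c_i+1$. In the remaining range $1\leq c_i\leq d$, the value $c_i$ labels a descent position between some pair $(a_j,a_{j+1})$ of $\sigma_{i-1}$, and the right-to-left descent enumeration gives $c_i=d-d_j^{<}$, where $d_j^{<}$ is the number of descents of $\sigma_{i-1}$ at positions strictly less than $j$; inserting $i$ replaces this descent position with the non-descent position $(a_j,i)$ followed by the new descent position $(i,a_{j+1})$, so $d(\sigma_i)=d$, and the new non-descent position is the $(j-d_j^{<}+1)$-th non-descent position of $\sigma_i$ from the left, yielding $\ell_i=d+(j-d_j^{<}+1)=c_i+j+1\geq c_i+2$. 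In every case $\ell_i>c_i$, completing the proof. The main obstacle is the bookkeeping in this third case: one must reconcile the right-to-left enumeration producing $c_i$ with the left-to-right enumeration producing $\ell_i$, carefully tracking how the insertion of $i$ converts one descent position into a non-descent position together with a neighbouring new descent position.
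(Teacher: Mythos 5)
Your proof is correct and takes essentially the same approach as the paper: both arguments track the recursive insertion procedure, observe that later insertions cannot create the adjacency, and compare the label of the gap immediately to the left of $i$ in $\sigma_i$ with $c_{i+1}\leq c_i$ (the paper's case $c_i>d$ is exactly your computation $\ell_i=c_i+1$, and its case $c_i\leq d$ is your remaining two cases phrased positionally). Your write-up is just a more explicit, fully worked-out version of the paper's terse argument.
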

\begin{proof}
Let $\pi_{(i)}$ be the subpermutation of $\pi$ by deleting the letters $i+1,i+2,\ldots,n$,
and let $\gamma_{i}=\Psi(\pi_{(i)})$. Clearly $\gamma_{n}=\Psi(\pi)$.
Consider $\gamma_{i-1}$,
assume that we have labeled the positions of $\gamma_{i-1}$ by $0,1,\ldots,i-1$.
Assume that des$(\gamma_{i-1})=d$.
We now insert the letters $i$ and $i+1$ (in order) into $\gamma_{i-1}$  to obtain $\gamma_{i}$ and $\gamma_{i+1}$.
If $c_{i}\leq d$, then $c_{i+1}\leq c_{i}\leq d$,
by rules (a) and (b) we see that
the letter $i+1$ is on the right of the letter $i$ in  $\gamma_{i+1}$,
thus, the letter $i+1$ is not immediately followed by $i$ in $\gamma_{i+1}$,
as well as in $\gamma_{n}$.
If $c_{i}>d$
and the letter $i+1$ is immediately followed by $i$ in $\gamma_{i+1}$,
by rule (c),
it is not hard to see that $c_{i+1}=c_{i}+1$,
contradicting the assumption that $c_{i}\geq c_{i+1}$,
and we complete the proof.
\end{proof}

By a similar argument, we can obtain the following lemma,
and we omit the proof of it.

\begin{lemma}\label{Prop-Liu-bi-statistic-s}
Given $\pi\in\mathfrak{S}_{n}$, let $I(\pi)=(c_{1},c_{2},\ldots,c_{n})$.
For any $1\leq i\leq n-1$, and $1\leq s\leq n-i$,
if $c_{i}\geq c_{i+1}\geq\cdots\geq c_{i+s} $,
then in permutation $\Psi(\pi)$, the letter $i+s$ is not immediately followed by the letter $i$.
\end{lemma}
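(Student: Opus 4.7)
The plan is to reduce the statement to an inequality on the label of one specific slot at the step $i+s-1$, and then to establish this inequality by an inductive invariant that runs through all the intermediate insertions.

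Suppose, for contradiction, that $(i+s)\,i$ is a factor of $\Psi(\pi)=\gamma_n$, where $\gamma_j:=\Psi(\pi_{(j)})$ and $\pi_{(j)}$ is obtained from $\pi$ by deleting the letters $j+1,\ldots,n$. Because any later insertion of a letter $k>i+s$ can only separate an adjacent pair and never brings two non-adjacent letters together, the factor $(i+s)\,i$ must already appear in $\gamma_{i+s}$; equivalently, at step $i+s$ the letter $i+s$ is placed in the slot immediately to the left of $i$ in $\gamma_{i+s-1}$, so $c_{i+s}=\ell_{i+s-1}$, where $\ell_j$ denotes the label of the slot immediately to the left of $i$ in $\gamma_j$ for $j\geq i$. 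Since $c_{i+s}\leq c_i$ by the hypothesis, it suffices to establish the stronger invariant $\ell_j\geq c_i+1$ for every $j\in\{i,i+1,\ldots,i+s-1\}$.

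The base case $j=i$ re-runs the case analysis from the proof of Lemma~\ref{Prop-Liu-bi-statistic}. If $c_i=0$, the chain forces $c_{i+1}=\cdots=c_{i+s}=0$, so the letters $i,i+1,\ldots,i+s$ are appended consecutively at the tail of $\gamma_{i+s}$, and $(i+s)\,i$ cannot be a factor of $\gamma_n$ since these letters keep their relative order under later insertions. If $1\leq c_i\leq\text{des}(\gamma_{i-1})$, the left neighbor of $i$ in $\gamma_i$ is the top $T$ of the former descent, and the slot between $T$ and $i$ is a remaining slot whose label, by a direct count via rule~(c), is at least $c_i+1$. If $c_i>\text{des}(\gamma_{i-1})$, the very computation used in the ``$c_i>d$'' part of the proof of Lemma~\ref{Prop-Liu-bi-statistic} yields $\ell_i=c_i+1$ exactly.

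For the inductive step, fix $j\in\{i+1,\ldots,i+s-1\}$ and assume $\ell_{j-1}\geq c_i+1$. Since $c_j\leq c_i<\ell_{j-1}$, the letter $j$ is not placed into the slot between the current left neighbor $L$ of $i$ and $i$; consequently $L$ is preserved, and because $L<i$ stays true, the slot between $L$ and $i$ remains a remaining slot in $\gamma_j$. A straightforward bookkeeping with rule~(c) then yields $\ell_j\geq \ell_{j-1}$: if $j$ is placed at the tail, both $\text{des}(\gamma_j)$ and the rank of this slot among the remaining slots (counted from the left) are unchanged, so $\ell_j=\ell_{j-1}$; if $j$ is placed at a descent-following slot, $\text{des}(\gamma_j)=\text{des}(\gamma_{j-1})$ while the rank is unchanged or increases by $1$; and if $j$ is placed at another remaining slot, $\text{des}(\gamma_j)=\text{des}(\gamma_{j-1})+1$ with the rank shifting by $0$ or $1$. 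In each subcase $\ell_j\geq\ell_{j-1}\geq c_i+1$, closing the induction. The main obstacle is precisely this three-way verification: one must confirm in each placement of $j$ that the slot between $L$ and $i$ keeps its remaining character and that the re-indexing forced by rule~(c) does not decrease its label. This is essentially the same structural bookkeeping that drives the ``$c_i>d$'' argument of Lemma~\ref{Prop-Liu-bi-statistic}, only now iterated over $s-1$ steps instead of a single step; once it is in hand, the bound $\ell_{i+s-1}\geq c_i+1>c_{i+s}$ delivers the desired contradiction.
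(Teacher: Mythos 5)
Your proof is correct and is, in substance, the ``similar argument'' that the paper invokes for this lemma without writing it down: your inductive invariant that the label $\ell_j$ of the slot immediately to the left of $i$ never decreases as the intermediate letters $i+1,\ldots,i+s-1$ are inserted is exactly the bookkeeping needed to iterate the two-case analysis of Lemma~\ref{Prop-Liu-bi-statistic} over $s$ steps, and it combines with $c_{i+s}\leq c_i$ to give the contradiction $c_{i+s}=\ell_{i+s-1}\geq c_i+1$. The base-case computation (label $\geq d+1\geq c_i+1$ when $1\leq c_i\leq d$, and $\ell_i=c_i+1$ exactly when $c_i>d$) and the three-way verification in the inductive step, including the edge cases of insertion at the front and $c_i=0$, all check out.
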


\begin{lemma}\label{lemma istd_M}
Let $M=\{1^{k_{1}},2^{k_{2}},\ldots,m^{k_{m}}\}$ with $k_{i}\geq1$ for all $i\in[m]$.
Let $w\in\mathfrak{S}_{M}$, then
\begin{align*}
\emph{Des}(\emph{istd}_{M}\circ\Psi\circ\emph{std}(w))=\emph{Des}(\Psi\circ\emph{std}(w)).
\end{align*}
\end{lemma}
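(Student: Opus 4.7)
The plan is to exploit the structure of $\text{std}$, which forces a monotonicity property on the code $I(\pi)$ that makes Lemma~\ref{Prop-Liu-bi-statistic-s} applicable. Write $\pi=\text{std}(w)$ and $\sigma=\Psi(\pi)$, so that $\text{istd}_M(\sigma)$ is obtained from $\sigma$ by collapsing each value in the $t$-th block $G_t:=\{k_1+\cdots+k_{t-1}+1,\ldots,k_1+\cdots+k_t\}$ to the single letter $t$. Since $\text{istd}_M$ is weakly order-preserving as a function on values, the inclusion $\text{Des}(\text{istd}_M(\sigma))\subseteq\text{Des}(\sigma)$ is automatic, and the content of the lemma is the reverse inclusion. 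A descent of $\sigma$ at position $p$ is lost under $\text{istd}_M$ if and only if $\sigma_p>\sigma_{p+1}$ but $\sigma_p$ and $\sigma_{p+1}$ lie in the same block $G_t$. So my goal reduces to showing: \emph{no such pair occurs in $\sigma=\Psi(\pi)$.}

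Set $a=\sigma_p>\sigma_{p+1}=b$ with $a,b\in G_t$. Since each $G_t$ is a set of consecutive integers, the whole range $\{b,b+1,\ldots,a\}$ lies in $G_t$. The first key step is to establish the following monotonicity of the inversion-table code: if $\pi=\text{std}(w)$ and the consecutive values $j$ and $j+1$ both lie in the same block $G_t$, then $c_j\geq c_{j+1}$, where $I(\pi)=(c_1,\ldots,c_n)$. To see this, note that $j$ and $j+1$ come from positions of equal letters of $w$ under standardization; the tie-breaking rule of $\text{std}$ then places $j$ strictly to the left of $j+1$ in $\pi$. Consequently $j$ itself is not to the right of $j+1$, so $c_{j+1}=|\{i<j:i\text{ appears right of }j+1\text{ in }\pi\}|\leq|\{i<j:i\text{ appears right of }j\text{ in }\pi\}|=c_j$, since "right of $j+1$" is contained in "right of $j$".

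Applying this repeatedly to the consecutive values $b,b+1,\ldots,a$ (all in the same block $G_t$) gives
\begin{equation*}
c_b\geq c_{b+1}\geq\cdots\geq c_a.
\end{equation*}
Lemma~\ref{Prop-Liu-bi-statistic-s} (with $i=b$ and $s=a-b$) then guarantees that in $\Psi(\pi)=\sigma$ the letter $a$ is \emph{not} immediately followed by the letter $b$, contradicting the assumption $\sigma_p=a,\sigma_{p+1}=b$. Hence every descent of $\sigma$ survives the application of $\text{istd}_M$, and the two descent sets coincide.

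The only genuinely delicate step is the monotonicity claim on the code $I(\pi)$, which hinges on the tie-breaking convention in the definition of $\text{std}$. Once that is in hand, the Carlitz-type Lemma~\ref{Prop-Liu-bi-statistic-s} does all of the combinatorial work for free. The remainder is bookkeeping about blocks $G_t$ being intervals of consecutive integers.
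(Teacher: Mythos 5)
Your proof is correct and follows essentially the same route as the paper: both reduce the claim to showing that no descent of $\Psi(\text{std}(w))$ can occur between two values lying in the same block $G_t$, establish the monotonicity $c_{a+1}\geq c_{a+2}\geq\cdots\geq c_{a+k_i}$ of the inversion code on each block (which the paper asserts as ``not hard to see'' and you justify explicitly via the tie-breaking rule of $\text{std}$), and then invoke Lemma \ref{Prop-Liu-bi-statistic-s} to rule out such a descent. The only difference is that you spell out the easy inclusion $\text{Des}(\text{istd}_M(\sigma))\subseteq\text{Des}(\sigma)$ and the code monotonicity in more detail than the paper does.
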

\begin{proof}
Assume that $\pi=\text{std}(w)\in\mathfrak{S}_{n}$.
Given $i\in[m]$,
assume that
$w_{i_{1}}=w_{i_{2}}=\cdots=w_{i_{k_{i}}}=i$, where $i_{1}<i_{2}<\cdots<i_{k_{i}}$.
Let $a=\sum_{j=1}^{i-1}k_{j}$,
then $\pi_{i_{1}}=a+1, \pi_{i_{2}}=a+2, \ldots, \pi_{i_{k_{i}}}=a+k_{i}$.
Assume that $I(\pi)=(c_{1},c_{2},\ldots,c_{n})$.
It is not hard to see that
$$c_{a+1}\geq c_{a+2}\geq\cdots\geq c_{a+k_{i}}.$$
To obtain $\text{istd}_{M}\circ\Psi(\pi)$,
we need replace the letters $a+1,a+2,\ldots,a+k_{i}$ in $\Psi(\pi)$ with $i$.
By Lemma \ref{Prop-Liu-bi-statistic-s},
we see that in $\Psi(\pi)$,
the letter $a+s_{2}$ is not immediately followed by the letter $a+s_{1}$,
for any $1\leq s_{1}<s_{2}\leq k_{i}$.
Thus, the replacement fixes the descent set, that is,
$$\text{Des}(\text{istd}_{M}\circ\Psi(\pi))=\text{Des}(\Psi(\pi)),$$
which completes the proof.
\end{proof}
\begin{lemma}\label{lemma Psi permutation}
For any permutation $\pi$, we have
$$
(\emph{eul}\circ I, \emph{\footnotesize INV})\pi=(\emph{des}, \emph{\footnotesize MAJ})\Psi(\pi).
$$
\end{lemma}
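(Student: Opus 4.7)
The plan is to induct on $n$, the length of $\pi$. The base case $n=1$ is trivial since both sides read $(0,0)$. For the inductive step, let $\pi' \in \mathfrak{S}_{n-1}$ be obtained from $\pi$ by deleting the letter $n$, write $I(\pi) = (c_1, \ldots, c_n)$, and set $d := \text{des}(\Psi(\pi'))$. By construction, $\Psi(\pi)$ is produced from $\Psi(\pi')$ by inserting $n$ at the slot labeled $c_n$ under rules (a)--(c). Assume inductively that $\text{INV}(\pi') = \text{MAJ}(\Psi(\pi'))$ and $\text{eul}(I(\pi')) = d$.

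For the descent/eul half, I would analyze the insertion slot-by-slot. Rules (a) and (b) label the ``after'' slot and the $d$ slots immediately following existing descents; inserting $n$ into any of these slots leaves the descent count unchanged (in the first case nothing new arises, in the second an old ``big--small'' descent is replaced by the pair ``big--$n$'', which is not a descent since $n$ is largest, and ``$n$--small'', which is). Rule (c) labels the slot before $\Psi(\pi')_1$ and the interior slots of ascending runs; inserting $n$ into any such slot creates exactly one new descent (``$n$--next'') and destroys none. Hence $\text{des}(\Psi(\pi)) = d + [c_n > d]$, which by the inductive hypothesis and the recursive definition of eul equals $\text{eul}(I(\pi))$.

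For the INV/MAJ half, since $\text{INV}(\pi) = \text{INV}(\pi') + c_n$ and $\text{INV}(\pi') = \text{MAJ}(\Psi(\pi'))$ by hypothesis, it suffices to check that inserting $n$ at the slot labeled $c_n$ raises MAJ by exactly $c_n$. Let the descents of $\Psi(\pi')$ sit at positions $p_1 < \cdots < p_d$. For $c_n = 0$ the insertion is at the very end and MAJ is unchanged. For $1 \le c_n \le d$, inserting $n$ just after $p_{d-c_n+1}$ kills the descent at $p_{d-c_n+1}$ (loss $p_{d-c_n+1}$), creates a new descent at $p_{d-c_n+1}+1$ (gain $p_{d-c_n+1}+1$), and shifts each of the $c_n - 1$ descents further right by $1$; the net change is $1 + (c_n - 1) = c_n$. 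For $d+1 \le c_n \le n-1$, the insertion sits just after some position $q$ in an ascent slot, contributing a new descent at $q+1$ and shifting the $d-j$ descents to the right of $q$ by $1$, where $j$ counts descents at or left of $q$.

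The main obstacle is to match, in the rule-(c) case, the left-to-right label $c_n - d$ with the MAJ increment $(q+1) + (d-j)$. This rests on the elementary identity $(\text{ascents up to }q) + (\text{descents up to }q) = q$: since the $(k)$-th remaining slot from the left (counting the ``before'' slot as the first) sits just after the $(k-1)$-th ascent from the left, writing $q-j = k-1$ reduces the MAJ change to $q + 1 + d - j = d + 1 + k = c_n$, exactly as required. Once this identity is in hand, both halves of the induction close simultaneously and the lemma follows.
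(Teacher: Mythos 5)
Your proof is correct. Note that the paper does not actually prove this lemma in-line---it defers to the proof of Theorem 7.1 (the case $k=1$) of \cite{Liu-2021}---and your insertion-based induction is precisely the standard argument one expects there: the labelling rules (a)--(c) are designed so that inserting $n$ at the slot labelled $c_{n}$ increases $\text{\footnotesize MAJ}$ by exactly $c_{n}$ and increases des by $[c_{n}>\text{des}(\Psi(\pi'))]$, which you correctly match against $\text{\footnotesize INV}(\pi)=\text{\footnotesize INV}(\pi')+c_{n}$ and the recursion defining eul (using, implicitly, that $I(\pi')=(c_{1},\ldots,c_{n-1})$). One cosmetic remark on the rule-(c) case: with your convention that the slot before the first letter is the $k$-th remaining slot with $k=1$, that slot carries label $d+k$ (not $d+1+k$), and $q+1+d-j=(q-j)+1+d=d+k$; the two off-by-ones cancel, so the needed identity (\emph{$\text{\footnotesize MAJ}$ increment} $=c_{n}$) is unaffected.
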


The proof of the lemma can be found in the proof of Theorem 7.1 in \cite{Liu-2021} (the case $k=1$).

\noindent\emph{Proof of Proposition \ref{Prop Psi_M}.}
Given $w\in \mathfrak{S}_{M}$, we have
\begin{align*}
\text{mstc}(w)&=\text{eul}\circ I\circ\text{std}(w)~\quad\quad\quad\quad~(\text{by~}(\ref{definition-mstc-1}))\\
&=\text{des}(\Psi\circ\text{std}(w))~\quad\quad\quad\quad~(\text{by~Lemma~}\ref{lemma Psi permutation})\\
&=\text{des}(\text{istd}_{M}\circ\Psi\circ\text{std}(w))~\quad(\text{by~Lemma~}\ref{lemma istd_M})\\
&=\text{des}(\Psi_{M}(w))~\quad\quad\quad\quad\quad\quad(\text{by~}(\ref{definition Psi M}))
\end{align*}
and
\begin{align*}
\text{{\footnotesize INV}}(w)&=\text{{\footnotesize INV}}(\text{std}(w))\\
&=\text{{\footnotesize MAJ}}(\Psi\circ\text{std}(w))~~\quad\quad\quad\quad~(\text{by~Lemma~}\ref{lemma Psi permutation})\\
&=\text{{\footnotesize MAJ}}(\text{istd}_{M}\circ\Psi\circ\text{std}(w))~\quad~(\text{by~Lemma~}\ref{lemma istd_M})\\
&=\text{{\footnotesize MAJ}}(\Psi_{M}(w)),\quad\quad\quad\quad\quad\quad~(\text{by~}(\ref{definition Psi M}))
\end{align*}
completing the proof.   \hfill{$\mbox{\rule[0pt]{0.7ex}{1.3ex}}$}

The following proposition shows that $\Psi_{M}$ preserves the increasing tail permutation.
\begin{proposition}\label{Prop-Euler-inv}
Let $M=\{1^{k_{1}},2^{k_{2}},\ldots,m^{k_{m}}\}$  with $k_{i}\geq1$ for all $i\in[m]$,
the set $\mathcal{P}_{M}$ is invariant under $\Psi_{M}$, that is
$$\Psi_{M}(\mathcal{P}_{M})=\mathcal{P}_{M}.$$
\end{proposition}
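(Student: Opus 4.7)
My plan is to translate membership in $\mathcal{P}_M$ into a transparent arithmetic condition on the inversion sequence $I(\text{std}(w))$, and then read off the conclusion directly from the recursive insertion algorithm that builds $\Psi$. Writing $a_i = k_1 + \cdots + k_{i-1}$ and $J_i = \{a_i+1, \ldots, a_{i+1}\}$, $\text{std}$ sends the $k_i$ occurrences of the letter $i$ in $w$, in order of their positions, to the letters of $J_i$; in particular the last occurrence of $i$ in $w$ coincides with the position of the letter $a_{i+1}$ in $\pi := \text{std}(w)$.

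The first step is the key characterization: for $(c_1,\ldots,c_n)=I(\pi)$,
$$w\in\mathcal{P}_M\iff c_{a_{i+1}}=0\text{ for every }i\in[m].$$
If $w\in\mathcal{P}_M$ then past the last occurrence of $i$ in $w$ only letters from $\{i+1,\ldots,m\}$ can appear (any $j\leq i$ there would contradict being past $j$'s last occurrence), so every entry of $\pi$ to the right of the position of $a_{i+1}$ lies in $J_{i+1}\cup\cdots\cup J_m$ and hence exceeds $a_{i+1}$; this gives $c_{a_{i+1}}=0$. Conversely, $c_{a_{i+1}}=0$ forces every letter of $\pi$ to the right of $a_{i+1}$ to be $>a_{i+1}$, which places the position of $a_{j+1}$ strictly to the left of that of $a_{i+1}$ whenever $j<i$, yielding the required increasing tail permutation for $w$.

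With this characterization in hand, I feed it into the insertion algorithm of $\Psi$. Rule (a) says the label $0$ is always the rightmost slot, so $c_j=0$ forces letter $j$ to be appended at the very end of the current word. Under $w\in\mathcal{P}_M$ every block-maximum $a_{i+1}$ is thus appended at the end when its turn arrives, which yields two geometric consequences for the final word $\Psi(\pi)$: first, at the moment $a_{i+1}$ is inserted all of $a_i+1,\ldots,a_{i+1}-1$ already lie strictly to its left, and every subsequent insertion uses a letter $>a_{i+1}$ and so lies outside $J_i$, whence $a_{i+1}$ remains the rightmost $J_i$-letter in $\Psi(\pi)$; second, when the later block-maximum $a_{i+2}$ is itself appended at the end, the already-present $a_{i+1}$ necessarily sits to its left, and this relative order is preserved by all remaining insertions, so the positions of $a_2,a_3,\ldots,a_{m+1}$ in $\Psi(\pi)$ form an increasing sequence.

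Applying $\text{istd}_M$ then collapses every letter of $J_i$ to the value $i$, so the last occurrence of $i$ in $v:=\Psi_M(w)=\text{istd}_M(\Psi(\pi))$ sits at the position of the rightmost $J_i$-letter in $\Psi(\pi)$, which by the first consequence is the position of $a_{i+1}$; the second consequence then forces these last-occurrence positions to increase in $i$, so the tail permutation of $v$ is $12\cdots m$ and $v\in\mathcal{P}_M$. This establishes $\Psi_M(\mathcal{P}_M)\subseteq\mathcal{P}_M$, and since $\Psi_M$ is a bijection on the finite set $\mathfrak{S}_M$ the inclusion upgrades to the desired equality. I expect the main obstacle to be the characterization step: spotting that the combinatorial ``increasing-tail'' condition is encoded by the single-coordinate vanishing $c_{a_{i+1}}=0$ is not visible at a glance, yet once unearthed it reduces everything else to routine unfolding of the insertion rules. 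It is also essential to notice that one cannot hope for the stronger statement $\Psi(\pi)\in\mathfrak{S}_n^M$ (small examples refute this), so the argument must track only the \emph{rightmost} $J_i$-letter rather than the order of the entire block $J_i$, which is precisely what the two geometric consequences above supply.
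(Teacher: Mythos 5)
Your proof is correct and follows essentially the same route as the paper: you standardize $w$, observe that the last occurrence of each letter $i$ becomes the partial sum $k_1+\cdots+k_i$ with vanishing inversion coordinate, and then use rule (a) of the insertion algorithm to conclude that everything to the right of that letter in $\Psi(\pi)$ is larger, so that $\mathrm{istd}_M$ returns an element of $\mathcal{P}_M$. Your two ``geometric consequences'' are just a more explicit unpacking of the paper's final sentence, and the equivalence (rather than one implication) in your characterization step is a harmless extra.
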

\begin{proof}
Let $w=w_{1}w_{2}\ldots w_{n}\in\mathcal{P}_{M}$,
assume that $w_{s_{j}}$ is the last  occurrence of $j$, $1\leq j\leq m$.
Then $w_{s_{1}}w_{s_{2}}\ldots w_{s_{m}}=12\ldots m$,
where $s_{1}<s_{2}<\cdots<s_{m}$.
Let $\pi=\pi_{1}\pi_{2}\ldots\pi_{n}=\text{std}(w)$.
We denote $a_{j}=\sum_{l=1}^{j}k_{l}$.
Then $\pi_{s_{1}}=a_{1},
\pi_{s_{2}}=a_{2},\ldots, \pi_{s_{m}}=a_{m}$.
Let $I(\pi)=(c_{1},c_{2},\ldots,c_{n})$.
It is not hard to see that $c_{a_{1}}=c_{a_{2}}=\cdots=c_{a_{m}}=0$.
By rule (a) of the construction of $\Psi(\pi)$, we see that in $\Psi(\pi)$,
all the letters to the right of $a_{j}$ are larger than $a_{j}$,
for $1\leq j\leq m$.
This implies that $\text{istd}_{M}\left(\Psi(\pi)\right)\in\mathcal{P}_{M}$,
that is, $\Psi_{M}(w)\in\mathcal{P}_{M}$, completing the proof.
\end{proof}
Combining Proposition  \ref{Prop Psi_M} with Proposition \ref{Prop-Euler-inv} gives the equidistribution of the bi-statistics (mstc, {\footnotesize INV}) and (des, {\footnotesize MAJ})  on $\mathcal{P}_{M}$.

\section{Equidistribution of {\large INV} and $r$-{\large MAJ}  on $\mathcal{P}_{M}$}\label{section-inv-rmaj}
The $r$-major index,
denoted $r\text{-\footnotesize MAJ}$, was introduced by
Rawlings \cite{Rawlings-1981} for  permutations,
and extended  to words in \cite{Rawlings-1981-2}.

First,
define
the \emph{$r$-descent set}, denoted $r$-Des,
and \emph{$r$-inversion set}, denoted $r$-Inv,
for word $w=w_{1}w_{2}\ldots w_{n}$ as follows:
\begin{align*}
r\text{-Des}(w)&=\{i\in\{1,2,\ldots,n-1\}:w_{i}\geq w_{i+1}+r\},\\
r\text{-Inv}(w)&=\{(i,j):1\leq i<j\leq n, w_{i}-r<w_{j}<w_{i}\}.
\end{align*}
Now, $r\text{-\footnotesize MAJ}$ is defined by
\begin{align*}
r\text{-\footnotesize MAJ}(w)=|r\text{-Inv}(w)|+\sum_{i\in r\text{-Des}(w)}i.
\end{align*}
It is clear that $r${-\footnotesize MAJ} reduces to {\footnotesize MAJ} when $r=1$ and to  {\footnotesize INV} when $r\geq n$,
thus the family of $r${-\footnotesize MAJ} interpolates between {\footnotesize{MAJ}} and {\footnotesize{INV}}.

Rawlings \cite{Rawlings-1981} showed that $r$-{\footnotesize MAJ} is equidistributed with {\footnotesize INV} on permutations by constructing a bijection on permutations that takes {\footnotesize INV} to $r$-{\footnotesize MAJ},
which is a generalization of Carlitz's bijection  which we have given in the previous section.
In \cite{Rawlings-1981-2},
Rawlings extended his bijection to words and then proved that {\footnotesize INV} and $r\text{-\footnotesize MAJ}$ are equidistributed on $\mathfrak{S}_{M}$.
Here we use a  different way to describe Rawlings' bijection $R$ on words given in \cite{Rawlings-1981-2} that takes {\footnotesize INV} to $r$-{\footnotesize MAJ}.

Given $w\in\mathfrak{S}_{M}$,
let $w^{\prime}\in\mathfrak{S}_{M^{\prime}}$, where $M^{\prime}=\{1^{k_{1}},2^{k_{2}},\ldots,(m-1)^{k_{m-1}}\}$,
be the word obtained from $w$ by deleting all of the occurrences of $m$.
Consider the $j$th $m$ in $w$ from left to right,
let $u_{j}(m)$ be the letters in $w$ that are on the right of this $m$ and that are smaller than $m$.
Clearly, $u_{1}(m)\geq u_{2}(m)\geq\cdots\geq u_{k_{m}}(m)$.
For example,
$w=2152431552$,
$w^{\prime}=2124312$,
and $u_{1}(5)=5$, $u_{2}(5)=1$, $u_{3}(5)=1$.
We will recursively define Rawlings' bijection $R$.
Assume that $R(w^{\prime})$ has been defined,
we will define $R(w)\in\mathfrak{S}_{M}$ from $R(w^{\prime})$ by inserting $k_{m}$ $m$'s.
Assume that we have inserted $(j-1)$ $m$'s,
we are going to insert the $j$th $m$.
First, we star the positions before each $m$.
Then, we label the positions that are not occupied by any star according to the following rules.
Using the labels $0,1,2,\ldots$ in order,
first read from right to left and label those positions that
will \emph{not} result in the creation of a new $r$-descent.
Then,
reading back from left to right the positions that \emph{will} create a new $r$-descent are labeled.
Finally, we insert the $j$th $m$ into the position labeled by $u_{j}(m)$.

For example,
assume that $r=3$, and we will insert the third $5$ into $215243152$,
then the labels in the top and bottom rows of
$$\includegraphics[width=6.0cm]{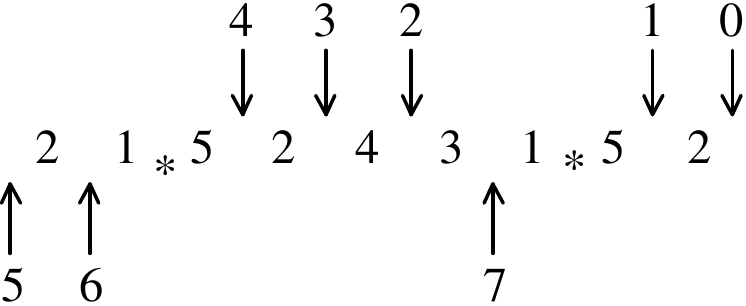}$$
respectively indicate the positions  that will not  and that will result in a new $3$-descent.
If $u_{3}(5)=1$,
we obtain $2152431552.$

With the above notations,
we now consider $w\in\mathcal{P}_{M}$.
It is not hard to see that
$w^{\prime}\in\mathcal{P}_{M^{\prime}}$ and $u_{k_{m}}(m)=0$.
By the procedure of creating $R(w)$ from $R(w^{\prime})$,
we see that the rightmost letter of  $R(w)$ is $m$.
Then an inductive proof shows that $R(w)\in\mathcal{P}_{M}$.
This is the content of the following proposition,
which implies the equidistribution of {\footnotesize INV} and $r$-{\footnotesize MAJ}  on $\mathcal{P}_{M}$.
\begin{proposition}\label{Prop-Rawlings-closed-on-M}
Let $M=\{1^{k_{1}},2^{k_{2}},\ldots,m^{k_{m}}\}$ with $k_{i}\geq1$ for all $i\in[m]$,
the set $\mathcal{P}_{M}$ is invariant under Rawlings' bijection, that is,
$$R(\mathcal{P}_{M})=\mathcal{P}_{M}.$$
\end{proposition}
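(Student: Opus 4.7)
My plan is to prove the proposition by induction on $m$, the number of distinct letters of $M$, mirroring the recursive definition of $R$. The base case $m=1$ is trivial: $\mathcal{P}_{M}$ consists only of $1^{k_{1}}$, which is fixed by $R$.

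For the inductive step, I would fix $w\in \mathcal{P}_{M}$ and let $w'\in \mathfrak{S}_{M'}$, with $M'=\{1^{k_{1}},\ldots,(m-1)^{k_{m-1}}\}$, be the word obtained from $w$ by deleting all occurrences of $m$. Since the tail permutation of $w$ is $12\cdots m$, the rightmost letter of $w$ is $m$ and the last occurrences of $1,2,\ldots,m-1$ in $w$ appear in that order; deleting the $m$'s preserves this ordering, so $w'\in \mathcal{P}_{M'}$. By the induction hypothesis, $R(w')\in \mathcal{P}_{M'}$; in particular the rightmost letter of $R(w')$ is $m-1$ and its tail permutation is $12\cdots(m-1)$.

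The crux of the argument is to show that the $k_{m}$-th (last inserted) copy of $m$ is placed at the very end of the word. First, since the rightmost letter of $w$ is $m$, no letter smaller than $m$ lies to the right of the $k_{m}$-th $m$ in $w$, giving $u_{k_{m}}(m)=0$. Next, I would verify that when the $k_{m}$-th $m$ is about to be inserted, the rightmost slot of the current word receives the label $0$. This slot is not starred, because stars are placed only immediately before existing $m$'s, whereas the rightmost slot lies to the right of everything. Moreover, inserting $m$ there cannot create a new $r$-descent: if $a$ denotes the current rightmost letter, then $a\leq m<m+r$ for $r\geq 1$, so the pair $(a,m)$ is not an $r$-descent. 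Hence, in the right-to-left pass of the labeling procedure, the rightmost slot is the first candidate and receives label $0$; the $k_{m}$-th $m$, with $u_{k_{m}}(m)=0$, is therefore inserted there, so $R(w)$ ends with $m$.

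To finish, I would observe that inserting the $m$'s does not alter the relative order of the non-$m$ letters of $R(w')$, so the last occurrences of $1,2,\ldots,m-1$ in $R(w)$ still appear in the order $12\cdots(m-1)$ inherited from $R(w')$. Together with $R(w)$ ending in $m$, this shows that the tail permutation of $R(w)$ is $12\cdots m$, i.e.\ $R(w)\in \mathcal{P}_{M}$. Hence $R(\mathcal{P}_{M})\subseteq \mathcal{P}_{M}$, and since $R$ is a bijection on the finite set $\mathfrak{S}_{M}$, we conclude $R(\mathcal{P}_{M})=\mathcal{P}_{M}$. The hardest part will be the bookkeeping in the crux step: carefully tracking the starred positions after each of the first $k_{m}-1$ insertions and confirming that the rightmost slot always wins the right-to-left labeling pass no matter where the previous $m$'s have landed.
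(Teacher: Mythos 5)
Your proof is correct and follows essentially the same route as the paper: delete the $m$'s, apply induction to get $R(w')\in\mathcal{P}_{M'}$, observe $u_{k_m}(m)=0$, and check that the rightmost (unstarred, non-$r$-descent-creating) slot receives label $0$ so that the last $m$ lands at the end. The paper states this argument only as a brief sketch before the proposition; your write-up supplies the same steps in more detail, and the "bookkeeping" you worry about at the end is in fact unnecessary since the rightmost slot is never starred and never creates an $r$-descent regardless of where earlier $m$'s landed.
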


\section{Equidistribution of (des,{\large MAJ}) and (exc,{\large DEN})  on $\mathcal{P}_{M}$}\label{section-bi-2}
Denert's permutation statistic, {\footnotesize DEN}, was introduced by Denert in \cite{Denert-1990},
and she conjectured that (exc, {\footnotesize DEN}) is Euler-Mahonian.
This conjecture was first proved by Foata and  Zeilberger \cite{Foata-1990},
Han \cite{Han-1990-1,Han-1990-2} gave two bijective proofs.
Han \cite{Han-1994} extended  {\footnotesize DEN} to words,
and
proved that  (exc, {\footnotesize DEN}) is Euler-Mahonian by giving a bijection on words that takes (des, {\footnotesize MAJ}) to (exc, {\footnotesize DEN}).
We denote Han's bijection given in \cite{Han-1994} by $H_{\text{\tiny DEN}}$.
The goal of this section is to establish the equidistribution of the bi-statistics (des, {\footnotesize MAJ}) and (exc, {\footnotesize DEN}) on $\mathcal{P}_{M}$
by proving that $H_{\text{\tiny DEN}}$ preserves the increasing tail permutation.

Let $w=w_{1}w_{2}\ldots w_{n}\in\mathfrak{S}_{M}$,
the two-line notation of $w$ is written as
\begin{equation*}
 w=
 \left(
 \begin{matrix}
   a_{1} & a_{2}  & a_{3}&  \ldots&a_{n}\\
   w_{1} & w_{2}  & w_{3}&  \ldots&w_{n}
  \end{matrix}
  \right),
\end{equation*}
where $\overline{w}:=a_{1}a_{2}\ldots a_{n}$ is the nondecreasing rearrangement of $w=w_{1}w_{2}\ldots w_{n}$, this notation will be adhered to throughout;
that is, if $w=w_{1}w_{2}\ldots w_{n}$ is a word,
$\overline{w}$ has the above meaning.

Let $w=w_{1}w_{2}\ldots w_{n}\in\mathfrak{S}_{M}$,
with $\overline{w}=a_{1}a_{2}\ldots a_{n}$.
An \emph{excedance} in $w$ is a triple $(i,a_{i},w_{i})$ such that $w_{i}>a_{i}$.
Here $i$ is called the \emph{excedance place},
$a_{i}$ is called the \emph{excedance bottom},
$w_{i}$ is called the \emph{excedance top}.
The number of excedances of $w$ is denoted by exc$(w)$.
Let Exc$~w$ be  the subword consisting of all the excedance tops of $w$, in the order induced by $w$.
Let Nexc$~w$ be  the subword consisting of those letters of $w$ that are not excedance tops.
For example,
if $w=121442314$,
then
Exc$~w=244$ and Nexc$~w =112314$.
Let imv$(w)$ be the number of weak inversions of $w$, i.e.,
$$\text{imv}(w)=|\{(i,j):i<j,w_{i}\geq w_{j}\}|.$$
Denert's statistic of $w$, {\footnotesize DEN}$(w)$, is defined by
$$\text{\footnotesize DEN}(w)=\sum_{i}\{i:w_{i}>a_{i}\}+\text{imv(Exc~}w)+\text{\text{\footnotesize INV}(Nexc~}w).$$
For example, let
\begin{equation*}
w= \left(
\begin{matrix}
1&1&2&2&3&3&3&4&4&5\\
5&3&1&1&2&4&4&3&2&3
\end{matrix}
  \right),
\end{equation*}
then Exc$~w=5344$ and Nexc$~w=112323$, and
$$\text{\footnotesize DEN}(w)=(1+2+6+7)+\text{imv}(5344)+\text{\footnotesize INV}(112323)=16+4+1=21.$$

Before stating Han's bijection $H_{\text{\tiny DEN}}$, we need some notions.
A \emph{biword} is an ordered pair of words of the same length,
written as
\begin{equation*}
 w=
 \left(
 \begin{matrix}
   x_{1} & x_{2}  & x_{3}&  \ldots&x_{n}\\
   w_{1} & w_{2}  & w_{3}&  \ldots&w_{n}
  \end{matrix}
  \right).
\end{equation*}
In particular, the two-line notation of a word is a  biword satisfying $x_{1}x_{2}\ldots x_{n}$ is the nondecreasing rearrangement of $w_{1}w_{2}\ldots w_{n}$.

\begin{definition} \emph{A biword
$
 w=
 \left(
 \begin{matrix}
   x_{1} & x_{2}  & x_{3}&  \ldots&x_{n}\\
   w_{1} & w_{2}  & w_{3}&  \ldots&w_{n}
  \end{matrix}
  \right)
$
is called a \emph{dominated cycle} if $n=1$ and $w_{1}=x_{n}$, or $n>1$, $w_{1}=x_{n}$, $w_{i}=x_{i-1}$ and $w_{1}>w_{i}$ for all $2\leq i\leq n$.}
\end{definition}

\begin{definition}
\emph{Let $\mathbb{P}$ be the set of positive integers.
Given $x,y\in\mathbb{P}$, the cyclic interval $\rrbracket x, y \rrbracket$ is defined by
\begin{align*}
\rrbracket x, y \rrbracket=
\begin{cases}
\{z\in \mathbb{P}:x<z\leq y\},& \text{if~}x\leq y;\\
\{z\in \mathbb{P}:x<z\text{~or~} z\leq y\}& \text{if~}x> y.
\end{cases}
\end{align*}}
\end{definition}

\begin{definition}
\emph{For $1\leq i\leq n-1$, define the operator $T_{i}$ on biword $
 w=
 \left(
 \begin{matrix}
   x_{1} & x_{2}  & x_{3}&  \ldots&x_{n}\\
   w_{1} & w_{2}  & w_{3}&  \ldots&w_{n}
  \end{matrix}
  \right)
$ to be
$$ T_{i}(w)=
 \left(
 \begin{matrix}
   x_{1} & x_{2}& \ldots&x_{i-1}\\
   w_{1} & w_{2}& \ldots&w_{i-1}
  \end{matrix}
  \right)
  T
  \left(
 \begin{matrix}
   x_{i} & x_{i+1} \\
   w_{i} & w_{i+1}
  \end{matrix}
  \right)
  \left(
 \begin{matrix}
   x_{i+2} &  \ldots&x_{n}\\
   w_{i+2} &  \ldots&w_{n}
  \end{matrix}
  \right),$$
  where
$$T
  \left(
 \begin{matrix}
   x & y \\
   \alpha & \beta
  \end{matrix}
  \right) =
\begin{cases}
  \left(
 \begin{matrix}
   y & x \\
   \beta &\alpha
  \end{matrix}
  \right),& \text{if~exactly~one~of~}\alpha\text{~and~}\beta\text{~lies~in~}\rrbracket x, y \rrbracket;\\
  \left(
 \begin{matrix}
   y & x \\
\alpha&\beta
  \end{matrix}
  \right),& \text{otherwise}.
\end{cases}
  $$}
\end{definition}

Han's bijection $H_{\text{\tiny DEN}}$ is devised to decompose a word into dominated cycles.
Below is a description of Han's bijection.

Let $w=w_{1}w_{2}\ldots w_{n}\in\mathfrak{S}_{M}$,
we write it in the two-line notation
\begin{equation*}
 w=
 \left(
 \begin{matrix}
   a_{1} & a_{2}  & a_{3}&  \ldots&a_{n}\\
   w_{1} & w_{2}  & w_{3}&  \ldots&w_{n}
  \end{matrix}
  \right).
\end{equation*}
If $n=1$, then $w$ itself is a dominated cycle.
We assume that $n\geq2$.
If $w_{n}=a_{n}$, then set
\begin{equation*}
 w^{\prime}=
 \left(
 \begin{matrix}
   a_{1} & a_{2}  & a_{3}&  \ldots&a_{n-1}\\
   w_{1} & w_{2}  & w_{3}&  \ldots&w_{n-1}
  \end{matrix}
  \right),
 ~~u=
 \left(
 \begin{matrix}
   a_{n} \\
   w_{n}
  \end{matrix}
  \right).
\end{equation*}
If $w_{n}\neq a_{n}$, let $i_{1}$ be the largest index such that $a_{i_{1}}= w_{n}$, and set
\begin{equation*}
 w^{(1)}=T_{n-2}\circ T_{n-1}\circ\cdots \circ T_{i_{1}}(w)=
 \left(
 \begin{matrix}
   a_{1}^{(1)} & a_{2}^{(1)}  &\ldots&a_{n-2}^{(1)}& w_{n}&a_{n}\\
   w_{1}^{(1)} & w_{2}^{(1)}  & \ldots&w_{n-2}^{(1)}&w_{n-1}^{(1)} &w_{n}
  \end{matrix}
  \right).
\end{equation*}
If $w_{n-1}^{(1)}=a_{n}$, then set
\begin{equation*}
 w^{\prime}=
 \left(
 \begin{matrix}
   a_{1}^{(1)} & a_{2}^{(1)}  &\ldots&a_{n-2}^{(1)}\\
   w_{1}^{(1)} & w_{2}^{(1)}  & \ldots&w_{n-2}^{(1)}
  \end{matrix}
  \right),
 ~~u=
 \left(
 \begin{matrix}
   w_{n}&a_{n} \\
   a_{n}&w_{n}
  \end{matrix}
  \right).
\end{equation*}
If $w_{n-1}^{(1)}\neq a_{n}$, let $i_{2}$ be the largest index such that $a_{i_{2}}^{(1)}= w_{n-1}^{(1)}$, and set
\begin{equation*}
 w^{(2)}=T_{n-3}\circ T_{n-1}\circ\cdots\circ T_{i_{2}}(w^{(1)})=
 \left(
 \begin{matrix}
a_{1}^{(2)} & a_{2}^{(2)}&\ldots&a_{n-3}^{(2)}& w_{n-1}^{(1)}&w_{n}&a_{n}\\
w_{1}^{(2)} & w_{2}^{(2)}& \ldots&w_{n-3}^{(2)}&w_{n-2}^{(2)}&w_{n-1}^{(1)} &w_{n}
  \end{matrix}
  \right).
\end{equation*}
Similarly, we can repeat the above process by considering whether $w_{n-2}^{(2)}$ is equal to $a_{n}$.
So we can obtain a sequence of words $w^{(1)},w^{(2)},\ldots,w^{(t)}$  such that
\begin{equation*}
 w^{(t)}=
 \left(
 \begin{matrix}
a_{1}^{(t)} & a_{2}^{(t)}&\ldots &a_{n-t-1}^{(t)}&w_{n-t+1}^{(t-1)}&\ldots&w_{n}         &a_{n}\\
w_{1}^{(t)} & w_{2}^{(t)}& \ldots&w_{n-t-1}^{(t)}&w_{n-t}^{(t)}&\ldots&w_{n-1}^{(1)} &w_{n}
  \end{matrix}
  \right),
\end{equation*}
where $w_{n-t}^{(t)}=a_{n}$ and $w_{n-t+1}^{(t-1)}\neq a_{n},\ldots,w_{n-1}^{(1)}\neq a_{n},w_{n}\neq a_{n}$.
Set
\begin{equation*}
 w^{\prime}=
 \left(
 \begin{matrix}
a_{1}^{(t)} & a_{2}^{(t)}&\ldots &a_{n-t-1}^{(t)}\\
w_{1}^{(t)} & w_{2}^{(t)}& \ldots&w_{n-t-1}^{(t)}
  \end{matrix}
  \right),
 ~~u=
 \left(
 \begin{matrix}
w_{n-t+1}^{(t-1)}&w_{n-t+2}^{(t-2)}&\ldots&w_{n}     &a_{n}\\
a_{n}    &w_{n-t+1}^{(t-1)}&\ldots&w_{n-1}^{(1)} &w_{n}
  \end{matrix}
  \right),
\end{equation*}
Note that $a_{n}\geq a_{i}$ for $1\leq i\leq n$, and
$w_{n-t+1}^{(t-1)}\neq a_{n},\ldots,w_{n-1}^{(1)}\neq a_{n},w_{n}\neq a_{n}$,
then we see that $u$ is a dominated cycle.

It is not hard to see that $w^{\prime}$ is the two-line notation of the word
$w_{1}^{(t)}w_{2}^{(t)}\ldots w_{n-t-1}^{(t)}$.
Appealing to induction, assume that $w^{\prime}$ admits a decomposition $u_{1},u_{2},\ldots,u_{s}$ into dominated cycles,
then the  decomposition of $w$ is defined to be $u_{1},u_{2},\ldots,u_{s},u$,
and $H_{\text{\tiny DEN}}(w)$ is obtained by  concatenating the bottom rows of these cycles.

\begin{theorem}[Han \cite{Han-1994}]\label{Han}
$H_{\emph{\tiny DEN}}:\mathfrak{S}_{M}\rightarrow\mathfrak{S}_{M}$ is a bijection
satisfying
$$\left(\emph{exc},\emph{\footnotesize{DEN}}\right)w=
\left(\emph{des},\emph{\footnotesize{MAJ}}\right)H_{\emph{\tiny DEN}}(w)$$ for all $w\in\mathfrak{S}_{M}$.
\end{theorem}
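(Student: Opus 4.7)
The plan is to prove the theorem via the dominated-cycle decomposition underlying the algorithm, attacking three sub-problems: well-definedness of the decomposition, bijectivity, and preservation of the joint statistic.

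First I would verify that each extracted sub-biword $u$ is indeed a dominated cycle. This is forced by the recipe: $i_k$ is chosen as the largest index with $a_{i_k}^{(k-1)} = w_{n-k+1}^{(k-1)}$, the composition $T_{n-k}T_{n-k+1}\cdots T_{i_k}$ moves the selected column rightward, and the stopping condition ``$w_{n-t}^{(t)}=a_n$'' ensures the collected columns close up into a cycle whose top entry $a_n$ dominates all others. Termination is immediate because $|w'|$ strictly decreases at each recursive step. For bijectivity I would produce an explicit inverse: given a decomposition $u_1,\ldots,u_s$, the final cycle $u_s$ is uniquely identified as the one containing the global maximum of the top row, and re-inserting its columns while undoing the $T_i$'s (each $T_i$ is an involution on biwords whose top row is sorted) recovers $w$ and the cycle decomposition preceding it.

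The statistic equality I would reduce to two claims. (a) For a single dominated cycle $u$ with bottom row $v$, one has $(\text{exc}, \text{\footnotesize DEN})(u)=(\text{des}, \text{\footnotesize MAJ})(v)$; a direct computation gives both pairs equal to $(1,1)$ when $|u|\ge 2$ and to $(0,0)$ otherwise, since the bottom row of a non-trivial dominated cycle is $x_n x_1 x_2\cdots x_{n-1}$ with a single descent at position $1$. (b) Each operator $T_i$ preserves $(\text{exc}, \text{\footnotesize DEN})$ of the biword on which it acts. The case analysis for (b) hinges on the cyclic interval $\rrbracket x_i, x_{i+1}\rrbracket$: when exactly one of $\alpha,\beta$ lies in this interval, the twisted swap in the bottom row provides precisely the correction needed to keep the excedance count and each of the three contributions to $\text{\footnotesize DEN}$ (excedance-place sum, imv of Exc, $\text{\footnotesize INV}$ of Nexc) balanced. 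An induction on the number of cycles, combined with the observation that each dominated cycle ends in a fixed-point column (bottom equals top) so that no spurious descent or excedance is created across cycle boundaries, upgrades the cycle-wise identity to the full word.

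The main obstacle will be step (b). The three ingredients of $\text{\footnotesize DEN}$ interact in subtle ways under $T_i$, and a direct case check is tedious. My planned workaround is to seek a column-wise reformulation $\text{\footnotesize DEN}(w)=\sum_i d_i(w)$ in which each $d_i$ is determined locally by the $i$-th column and its relation to its neighbors, so that $T_i$ visibly rearranges the $d_j$'s. Once such a reformulation is in hand, the verification that $T_i$ preserves $\text{\footnotesize DEN}$ reduces to a two-column local check, matching exactly the two cases in the definition of the operator $T$.
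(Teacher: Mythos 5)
This theorem is not proved in the paper at all: it is imported verbatim from Han \cite{Han-1994}, and Section 7 only reproduces the construction of $H_{\text{\tiny DEN}}$ before citing Han for the result. So the comparison here is between your sketch and Han's original argument rather than anything in the text. Your overall architecture (decompose into dominated cycles, show the operators $T_i$ preserve the statistics) is indeed the shape of Han's proof, but your step (a) is concretely false, and it is the step carrying the entire Eulerian half of the theorem. The bottom row of a dominated cycle is $x_n x_1 x_2\cdots x_{n-1}$ with $x_n$ strictly largest, but nothing in the definition forces $x_1,\ldots,x_{n-1}$ to be increasing, so the bottom row can have many descents and $(\mathrm{des},\mathrm{MAJ})$ of it is not $(1,1)$. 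For instance $u=\left(\begin{smallmatrix}1&3&2&4\\4&1&3&2\end{smallmatrix}\right)$ is a dominated cycle --- it is exactly the single cycle that the algorithm produces from $w=4312$ --- and its bottom row $4132$ has $\mathrm{des}=2$ and $\mathrm{MAJ}=4$, matching $\mathrm{exc}(4312)=2$ and $\mathrm{DEN}(4312)=4$, not $(1,1)$.

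Two further points in the same step are wrong or unsupported. A non-trivial dominated cycle does not end in a fixed-point column: its last column is $\left(\begin{smallmatrix}a_n\\ w_n\end{smallmatrix}\right)$ with $w_n<a_n$; the actual reason no descent appears at a cycle boundary is that the leading letters of successive cycles are their maxima, which increase from left to right, while each cycle ends in a letter strictly below its own maximum. More seriously, even with correct cycle-local values, $\mathrm{MAJ}$ weights each descent by its position in the \emph{concatenated} word, so a cycle-wise identity cannot simply be summed; one must match the descent set of $H_{\text{\tiny DEN}}(w)$ with the set of excedance places of $w$, and separately verify that each $T_i$ preserves all three constituents of $\mathrm{DEN}$ (the excedance-place sum, $\mathrm{imv}$ of the excedance tops, and $\mathrm{INV}$ of the non-excedance letters). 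That matching and that case analysis are the real content of Han's proof; your sketch defers both to a hoped-for column-local reformulation of $\mathrm{DEN}$ that is not produced and is not obviously available, since two of the three summands are genuinely non-local. As it stands the proposal does not constitute a proof.
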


\begin{example}
\emph{Let $w=124324$, then
\begin{align*}
 \left(
 \begin{matrix}
1&2&2&3&4&4\\
1&2&4&3&2&4
  \end{matrix}
  \right)
&\longrightarrow
   \left(
 \begin{matrix}
1&2&\textbf{2}&3&4\\
1&2&4&3&2
  \end{matrix}
  \right)
     \left(
 \begin{matrix}
4\\
4
  \end{matrix}
  \right)
  \stackrel{T_{3}}{\longrightarrow}
    \left(
 \begin{matrix}
1&2&3&2&4\\
1&2&3&4&2
  \end{matrix}
  \right)
     \left(
 \begin{matrix}
4\\
4
  \end{matrix}
  \right)\\
 &\longrightarrow
      \left(
 \begin{matrix}
1\\
1
  \end{matrix}
  \right)
      \left(
 \begin{matrix}
2\\
2
  \end{matrix}
  \right)
       \left(
 \begin{matrix}
3\\
3
  \end{matrix}
  \right)
       \left(
   \begin{matrix}
2&4\\
4&2
  \end{matrix}
  \right)
     \left(
 \begin{matrix}
4\\
4
  \end{matrix}
  \right).
  \end{align*}
  So $H_{\text{\tiny DEN}}(w)=123424$.
  We see that (exc, {\footnotesize DEN}$)w=$
  (des, {\footnotesize MAJ}$)H_{\text{\tiny DEN}}(w)=(1,4)$.}
\end{example}

The following proposition shows that Han's bijection $H_{\text{\tiny DEN}}$ preserves the increasing tail permutation,
which implies the equidistribution of the bi-statistics (des, {\footnotesize MAJ}) and (exc, {\footnotesize DEN}) on $\mathcal{P}_{M}$.
\begin{proposition}\label{Prop-Han-closed-on-M}
Let $M=\{1^{k_{1}},2^{k_{2}},\ldots,m^{k_{m}}\}$ with $k_{i}\geq1$ for all $i\in[m]$,
the set $\mathcal{P}_{M}$ is invariant under Han's bijection $H_{\emph{\tiny DEN}}$, that is,
$$H_{\emph{\tiny DEN}}(\mathcal{P}_{M})=\mathcal{P}_{M}.$$
\end{proposition}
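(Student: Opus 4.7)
The plan is to proceed by induction on $n = |w|$ using the recursive structure of Han's construction: $H_{\emph{\tiny DEN}}(w) = H_{\emph{\tiny DEN}}(w^{\star}) \cdot b(u)$, where $u$ is the last peeled dominated cycle, $b(u)$ denotes its bottom row, and $w^{\star}$ is the remaining prefix. The base case $n = 1$ is immediate.

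The naive induction hypothesis ``$v \in \mathcal{P}_M \Longrightarrow H_{\emph{\tiny DEN}}(v) \in \mathcal{P}_M$'' is too weak: when $w \in \mathcal{P}_M$ and $k_m \geq 2$ the prefix $w' = w_1 \cdots w_{n-1}$ need not lie in $\mathcal{P}_{M \setminus \{m\}}$. For instance, $w = 13123 \in \mathcal{P}_{\{1^2,2,3^2\}}$, yet $w' = 1312$ has tail permutation $312$. I will therefore strengthen the hypothesis as follows. For a word $v$ and an integer $t \geq 0$, write $Q_t(v)$ for the property that the positions of the last occurrences of $1, 2, \ldots, t$ in $v$ form an increasing sequence (letters absent from $v$ are skipped). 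The proposition will then follow from the strengthened claim
\begin{align*}
Q_t(v) \;\Longrightarrow\; Q_t\bigl(H_{\emph{\tiny DEN}}(v)\bigr) \quad \text{for every word } v \text{ and every integer } t \geq 0,
\end{align*}
by specializing to $t = m$.

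The key structural fact I will exploit concerns the shape of the last peeled cycle $u$. In the trivial case ($v_n = a_n$) we have $u = \binom{a_n}{v_n}$ of length $1$. In the non-trivial case $u$ has length $\ell \geq 2$, its top row is a nondecreasing sequence $x_1 \leq x_2 \leq \cdots \leq x_\ell$ with $x_\ell = a_n$ and $x_{\ell-1} = v_n$, and its bottom row is $a_n, x_1, x_2, \ldots, x_{\ell-2}, v_n$. Consequently every entry of $b(u)$ besides the leading $a_n$ is $\leq v_n$, and no letter strictly between $v_n$ and $a_n$ appears anywhere in $u$.

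For $w \in \mathcal{P}_M$, $w_n = m = a_n$ forces $u = \binom{m}{m}$, so $H_{\emph{\tiny DEN}}(w) = H_{\emph{\tiny DEN}}(w') \cdot m$; removing the terminal $m$ preserves the last-occurrence positions of every letter $< m$, giving $Q_{m-1}(w')$, and the induction hypothesis yields $Q_{m-1}\bigl(H_{\emph{\tiny DEN}}(w')\bigr)$, so appending $m$ produces a word with tail permutation $1\,2\cdots m$. The main obstacle I expect is the inductive step in the non-trivial case ($v_n \neq a_n$), where $b(u)$ has length $\geq 2$ and several new last-occurrence positions appear in the suffix simultaneously. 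One has to split on whether $v_n \leq t$ or $v_n > t$, identify which letters of $\{1, \ldots, t\}$ are consumed by $u$ (always a subset of $[1, v_n]$), show that the prefix $v^{\star}$ remaining after peeling $u$ still satisfies an appropriate variant of $Q_t$ so that the inductive hypothesis applies to $v^{\star}$, and finally verify that concatenating $b(u)$ preserves $Q_t$. The structural fact above is what makes the bookkeeping tractable: letters in $(v_n, a_n)$ are never touched by $u$, the leading entry $a_n$ of $b(u)$ is the unique large letter and falls outside $\{1, \ldots, t\}$ whenever $t < a_n$, and the remaining entries $x_1 \leq \cdots \leq x_{\ell-1} = v_n$ of $b(u)$ are themselves in nondecreasing value order, so last occurrences falling inside $b(u)$ automatically respect the required order. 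Reconciling this with the way Han's $T$-operators rearrange the bottom row of the prefix before it is passed to the induction is where I expect the bulk of the technical effort.
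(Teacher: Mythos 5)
There is a genuine gap, and it sits exactly where you predicted "the bulk of the technical effort" would be. Your argument reduces the proposition to two things: (a) the prefix $w^{\star}$ that remains after peeling the last dominated cycle still satisfies the appropriate last-occurrence property, so that the induction hypothesis applies to it; and (b) a structural fact about the peeled cycle $u$ guaranteeing that the suffix $b(u)$ cannot disturb the order of last occurrences. Point (a) is left entirely open, and it is not bookkeeping: $w^{\star}$ is \emph{not} a subword of $w$ but the result of the operators $T_{i}$ repeatedly swapping adjacent columns, and each such swap can interchange two adjacent letters of the bottom row. Showing that these swaps never interchange the last occurrence of some $i$ with the last occurrence of $i+1$ is the whole content of the proposition; the paper's proof consists precisely of this verification. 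It tracks the bottom row through the entire sequence of $T$-operations as a chain $\beta_{1},\ldots,\beta_{s+1}$ of words, each obtained from the previous by at most one adjacent transposition, and rules out the first offending swap by observing that in Han's algorithm the column pair $\bigl(\begin{smallmatrix}x&y\\ i&i+1\end{smallmatrix}\bigr)$ being transposed always has $x\leq y$ with $x$ drawn from the letters to the right of the swap position, hence $x>i$, so neither $i$ nor $i+1$ lies in $\rrbracket x,y\rrbracket$ and $T$ leaves the bottom row fixed. Nothing in your proposal substitutes for this argument.

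Moreover, the ``key structural fact'' you plan to lean on for point (b) is false. Take $w=231123\in\mathcal{P}_{\{1^{2},2^{2},3^{2}\}}$. Peeling the trivial cycle $\bigl(\begin{smallmatrix}3\\3\end{smallmatrix}\bigr)$ leaves $23112$, which Han's procedure consumes in a single dominated cycle
\begin{equation*}
u=\left(\begin{matrix}2&1&1&2&3\\ 3&2&1&1&2\end{matrix}\right),
\end{equation*}
whose top row $2,1,1,2,3$ is not nondecreasing (a dominated cycle only requires its first bottom entry to dominate the rest; no monotonicity follows). Consequently the tail of $b(u)=32112$ after its leading letter, namely $2,1,1,2$, is not in nondecreasing value order, so your concluding step ``last occurrences falling inside $b(u)$ automatically respect the required order'' does not follow from the stated reason. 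The conclusion you want happens to hold in this example, but you would need a different justification, and in any case the unproven step (a) remains. I would recommend abandoning the peel-the-last-cycle induction in favour of the transposition-by-transposition analysis.
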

\begin{proof}
Given $w\in\mathcal{P}_{M}$,
assume that the operations that we used  to create $H_{\text{\tiny DEN}}(w)$
are $T_{k_{1}}$, $T_{k_{2}}$, $\ldots,$ $T_{k_{s}}$ in order.
Let
$${\alpha_{1} \choose \beta_{1}}={\overline{w} \choose w}\text{~and~}
{\alpha_{i+1} \choose \beta_{i+1}}=T_{k_{i}}{\alpha_{i} \choose \beta_{i}}\text{~for~}1\leq i\leq s.$$
Consider the sequence $$\beta_{1},\beta_{2},\beta_{3},\ldots,\beta_{s+1},$$
where $\beta_{1}=w$, $\beta_{s+1}=H_{\text{\tiny DEN}}(w)$.
It is clear that for $1\leq i\leq s$,
either $\beta_{i+1}=\beta_{i}$, or $\beta_{i+1}$ is obtained from $\beta_{i}$ by interchanging two consecutive entries.
Note that the tail permutation of $\beta_{1}$ is $123\ldots m$.
If all the tail permutations of $\beta_{1},\beta_{2},\beta_{3},\ldots,\beta_{s+1}$ are $123\ldots m$, our proof is completed.
Now we assume that all the  tail permutations of  $\beta_{1},\beta_{2},\ldots,\beta_{t}$ are $123\ldots m$,
and the  tail permutation  of $\beta_{t+1}$ is not $123\ldots m$.
Assume that $\beta_{t}=b_{1}b_{2}\ldots b_{n}$ and $\beta_{t+1}=b_{1}b_{2}\ldots b_{c-1}b_{c+1}b_{c}b_{c+2}\ldots b_{n}$.
Because the tail permutations of $\beta_{t}$ and $\beta_{t+1}$  are different and
the tail permutation of $\beta_{t}$ is $123\ldots m$,
we see that the tail permutation of $\beta_{t+1}$  has the form $12\ldots (i-1)(i+1)i(i+2)\ldots m$.
Thus, in $\beta_{t}$,
$b_{c}$ is the last occurrence of the letter $i$ and $b_{c+1}$ is the last occurrence of the letter $i+1$.
Since the tail permutation of $\beta_{t}$ is $123\ldots m$ and $b_{c}$ is the last occurrence of the letter $i$, we have $\min\{b_{c+1},b_{c+2},\ldots,b_{n}\}>i$.
Note that
\begin{equation*}\label{Eq-Han}
  \left(
 \begin{matrix}
  \alpha_{t+1} \\
   \beta_{t+1}
  \end{matrix}
  \right)=T_{k_{t}}  \left(
 \begin{matrix}
  \alpha_{t} \\
   \beta_{t}
  \end{matrix}
  \right)=
 \left(
 \begin{matrix}
   \ast & \ast& \ldots&\ast\\
   b_{1} & b_{2}& \ldots&b_{c-1}
  \end{matrix}
  \right)
  T
  \left(
 \begin{matrix}
   x & y \\
   i & i+1
  \end{matrix}
  \right)
  \left(
 \begin{matrix}
   \ast &  \ldots&\ast\\
   b_{c+2} &  \ldots&b_{n}
  \end{matrix}
  \right).
  \end{equation*}
By the  process of creating $H_{\text{\tiny DEN}}(w)$,
we can see the following two facts:
(i) $x\leq y$;
(ii) $x\in\{b_{c+2},b_{c+3},\ldots,b_{n}\}$.
From fact (i) we see that $$\rrbracket x, y \rrbracket=\{x+1,x+2,\ldots,y\}.$$
By fact (ii) we have
$$x\geq\min\{b_{c+2},b_{c+3},\ldots,b_{n}\}>i.$$
Thus, $i<i+1<x+1$.
Then  both $i$ and $i+1$ are not in the set $\rrbracket x, y \rrbracket$.
By the definition of the operator $T$, we have $\beta_{t+1}=\beta_{t}$,
which is a contradiction
and the proof is completed.
\end{proof}

\section{Equidistribution of (des,{\large MAK},{\large MAD}) and (exc,{\large DEN},{\large INV})  on $\mathcal{P}_{M}$}\label{section-tri}
The Mahonian permutation statistic {\footnotesize MAK} was introduced by Foata and Zeilberger \cite{Foata-1990}.
Clarke, Steingr\'{\i}msson and Zeng \cite{Clarke-1997} extended it to words.
In the same paper, Clarke, Steingr\'{\i}msson and Zeng introduced a new Mahonian statistic  {\footnotesize MAD} on words,
and they proved that the triple statistics (des, {\footnotesize MAK}, {\footnotesize MAD}) and (exc, {\footnotesize DEN}, {\footnotesize INV}) are equidistributed on words by exhibiting a bijection $\Phi$ on words that takes (des, {\footnotesize MAK}, {\footnotesize MAD}) to (exc, {\footnotesize DEN}, {\footnotesize INV}).
The goal of this section is to establish the equidistribution of  (des, {\footnotesize MAK}, {\footnotesize MAD}) and (exc, {\footnotesize DEN}, {\footnotesize INV}) on $\mathcal{P}_{M}$
by proving that $\Phi$  preserves the increasing tail permutation.

Let $w=w_{1}w_{2}\ldots w_{n}\in\mathfrak{S}_{M}$,
the \emph{height} $h(a)$ of a letter $a$ in $w$ is one more than the number of letters in $w$ that are strictly smaller than $a$.
The \emph{value} of the $i$th letter in $w$, denoted by $v_{i}$,
is defined by
$$v_{i}=h(w_{i})+l(i),$$
where $l(i)$ is the number of letters in $w$ that are to the left of $w_{i}$ and
equal to $w_{i}$.
For example, given $w=21144231$,
then $\overline{w}=11122344$,
so the heights of $1,2,3,4$ are, respectively, $1,4,6,7$.
The values of the letters of $w$ are given by $4,1,2,7,8,5,6,3,$ in the order in which they appear in $w$.
It is not hard to see that $v_{1}v_{2}\ldots v_{n}=\text{std}(w)$.

Let $w=w_{1}w_{2}\ldots w_{n}\in\mathfrak{S}_{M}$,
recall that a decent of $w$ is an index $i$ such that $w_{i}>w_{i+1}$,
we call $w_{i}$ a \emph{descent top},
and  $w_{i+1}$ a \emph{descent bottom}.
The \emph{descent tops sum} of $w$, denoted by $\text{Dtop}(w)$,
is the sum of the heights of the descent tops of $w$.
The \emph{descent bottoms sum} of a word $w$,
denoted by $\text{Dbot}(w)$, is the sum of the values of the descent bottoms of $w$.
The \emph{descent difference} of $w$ is
$$\text{Ddif}(w)=\text{Dtop}(w)-\text{Dbot}(w).$$

Given a word $w=w_{1}w_{2}\ldots w_{n}$,
we separate $w$ into its \emph{descent blocks} by putting in dashes between $w_{i}$ and $w_{i+1}$ whenever $w_{i}\leq w_{i+1}$.
A maximal  contiguous subword of $w$ which lies between two dashes is a \emph{descent block}.
A descent block is an \emph{outsider} if it has only one letter;
otherwise, it is a \emph{proper} descent block.
The leftmost letter of a proper descent block is its \emph{closer} and the rightmost letter is its \emph{opener}.
Let $B$ be a proper descent block of the word $w$ and let $C(B)$ and $O(B)$
be the closer and opener of $B$, respectively.
Let $a$ be a letter of $w$,
we say that $a$ is \emph{embraced} by $B$ if $C(B)\geq a> O(B)$.

The \emph{right embracing numbers} of a word $w=w_{1}w_{2}\ldots w_{n}$ are the numbers $e_{1},e_{2},\ldots,e_{n}$ where $e_{i}$ is the number of descent blocks in $w$ that are strictly to the right of $w_{i}$ and that embrace $w_{i}$.
The \emph{right embracing sum} of $w$, denoted by Res$(w)$, is defined by
$$\text{Res}(w)=e_{1}+e_{2}+\cdots+ e_{n}.$$
\begin{definition}
\begin{align*}
\emph{\footnotesize MAK}(w)&=\emph{Dbot}(w)+\emph{Res}(w),\\
\emph{\footnotesize MAD}(w)&=\emph{Ddif}(w)+\emph{Res}(w).
\end{align*}
\end{definition}

We now give an overview of the bijection $\Phi$, see \cite{Clarke-1997}.
Given $w=w_{1}w_{2}\ldots w_{n}\in\mathfrak{S}_{M}$,
let $\pi=\text{std}(w)$.
For permutation $\pi$,
we first construct two biwords
$\left(
 \begin{matrix}
   f\\
   f^{\prime}
  \end{matrix}
  \right)$
and
$
 \left(
 \begin{matrix}
   g\\
   g^{\prime}
  \end{matrix}
  \right)
$,
and then form the  biword
$
\left(
 \begin{matrix}
   f&g\\
   f^{\prime}&g^{\prime}
  \end{matrix}
  \right)
$
by concatenating $f$ and $g$, and $f^{\prime}$ and $g^{\prime}$, respectively.
The word $f$ is defined as the subword of descent bottoms in $\pi$, ordered increasingly.
The word $g$ is defined as the subword of non-descent bottoms in $\pi$, also ordered increasingly.
The word $f^{\prime}$ is the subword of descent tops in $\pi$,
ordered so that for any letter $x$ in $f^{\prime}$,
there are exactly $d$ letters in $f^{\prime}$ that are on the left of $x$ and that are greater than $x$,
where $d$ is the embracing number of the letter $x$ in $\pi$.
The word $g^{\prime}$ is the subword of non-descent tops in $\pi$,
ordered so that for any letter $x$ in $g^{\prime}$,
there are exactly $d$ letters in $g^{\prime}$ that are on the right of $x$ and that are smaller than $x$,
where $d$ is the embracing number of the letter $x$ in $\pi$.
Rearranging the columns of $
 \left(
 \begin{matrix}
   f&g\\
   f^{\prime}&g^{\prime}
  \end{matrix}
  \right)
$,
so that the top row is in increasing order,
then let $\pi^{\prime}$ be the bottom row  of the rearranged biword.
We point out that $f$ and $f^{{\prime}}$ are the excedance bottoms and excedance tops in $\pi^{\prime}$, respectively.
Finally, we let $\Phi(w)=\text{istd}_{M}(\pi^{\prime})$.

\begin{example}
\emph{Consider the word
$$w=1~3~2~1~3~2~2~3.$$
Then
$$\pi=\text{std}(w)=1-6~3~2-7~4-5-8.$$
It is not hard to see that
$$
 \left(
 \begin{matrix}
   f\\
   f^{\prime}
  \end{matrix}
  \right)=
  \left(
 \begin{matrix}
   2&3&4\\
   3&7&6
  \end{matrix}
  \right),~~~~
 \left(
 \begin{matrix}
   g\\
   g^{\prime}
  \end{matrix}
  \right)=
  \left(
 \begin{matrix}
   1&5&6&7&8\\
   1&2&4&5&8
  \end{matrix}
  \right).
$$
Then$$
 \left(
 \begin{matrix}
   f&g\\
   f^{\prime}&g^{\prime}
  \end{matrix}
  \right)=
  \left(
 \begin{matrix}
   2&3&4&1&5&6&7&8\\
   3&7&6&1&2&4&5&8
  \end{matrix}
  \right)\rightarrow
  \left(
 \begin{matrix}
   1&2&3&4&5&6&7&8\\
   1&3&7&6&2&4&5&8
  \end{matrix}
  \right),$$
  and
  $$\pi^{\prime}=1~3~7~6~2~4~5~8.$$
Thus,
$$\Phi(w)=1~2~3~3~1~2~2~3.$$}
\end{example}

\begin{theorem}[Clarke-Steingr\'{\i}msson-Zeng \cite{Clarke-1997}]\label{Clarke-1997}
$\Phi:\mathfrak{S}_{M}\rightarrow\mathfrak{S}_{M}$ is a bijection
satisfying
$$\left(\emph{des},\emph{\footnotesize{MAK}},\emph{\footnotesize{MAD}}\right)w=
\left(\emph{exc},\emph{\footnotesize{DEN}},\emph{\footnotesize{INV}}\right)\Phi(w)$$ for all $w\in\mathfrak{S}_{M}$.
\end{theorem}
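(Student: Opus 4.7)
The plan is to reduce the identity to the case of permutations, since the construction factors as $\Phi=\text{istd}_M\circ\Phi_{\text{perm}}\circ\text{std}$, where $\Phi_{\text{perm}}$ sends $\pi$ to $\pi'$ via the biword construction. I would first verify that all six statistics respect this factorization: on the input side, $(\text{des},\text{{\footnotesize MAK}},\text{{\footnotesize MAD}})w=(\text{des},\text{{\footnotesize MAK}},\text{{\footnotesize MAD}})\text{std}(w)$, using that standardization preserves descent positions together with the identification of the value of the $i$-th letter of $w$ with $\pi_i$; and on the output side, $(\text{exc},\text{{\footnotesize DEN}},\text{{\footnotesize INV}})\Phi(w)=(\text{exc},\text{{\footnotesize DEN}},\text{{\footnotesize INV}})\pi'$, using that an excedance $\pi'_i>i$ in $\pi'$ destandardizes to an excedance in $\Phi(w)$ precisely when the letters at positions $i$ and $\pi'_i$ lie in different blocks of $M$, which is guaranteed because the excedance columns of $\pi'$ come from descents of $\pi$ (strict inequalities).

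The heart of the argument is the permutation identity $(\text{des},\text{{\footnotesize MAK}},\text{{\footnotesize MAD}})\pi=(\text{exc},\text{{\footnotesize DEN}},\text{{\footnotesize INV}})\pi'$. By construction $f$ lists the descent bottoms of $\pi$ and $f'$ the descent tops, with $f_k<f'_k$ columnwise, while the columns of $\binom{g}{g'}$ satisfy $g_k\geq g'_k$. Thus, after rearranging the columns of $\binom{f\;g}{f'\;g'}$ so that the top row is increasing, the excedance columns of $\pi'$ are exactly those coming from $\binom{f}{f'}$. This at once gives $\text{des}(\pi)=\text{exc}(\pi')$, the identifications $\text{Exc}\,\pi'=f'$ and $\text{Nexc}\,\pi'=g'$, and the equality $\sum\{i:\pi'_i>i\}=\text{Dbot}(\pi)$, which accounts for the first piece of the {\footnotesize DEN}/{\footnotesize MAK} matching.

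The embracing-number ordering of $f'$ and $g'$ then does the remaining bookkeeping. I would show that, for each descent top $x$, the number of larger letters of $f'$ appearing to its left equals the number of descent blocks to the right of $x$ in $\pi$ that embrace $x$, and symmetrically in $g'$ the number of smaller letters to the right of each non-descent-top $y$ equals the corresponding embracing count for $y$; summing both contributions over all letters produces $\text{imv}(f')+\text{{\footnotesize INV}}(g')=\text{Res}(\pi)$, which together with the Dbot identity gives $\text{{\footnotesize MAK}}(\pi)=\text{{\footnotesize DEN}}(\pi')$. For the last coordinate I would decompose $\text{{\footnotesize INV}}(\pi')=\text{imv}(f')+\text{{\footnotesize INV}}(g')+I$, where $I$ counts inversions between $\text{Exc}\,\pi'$ and $\text{Nexc}\,\pi'$, and identify $I=\text{Ddif}(\pi)-\text{Dbot}(\pi)$ by counting, for each descent block of $\pi$, how many non-descent-top letters lie strictly between its opener and its closer.

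Bijectivity then follows from reversibility: given $\pi'$ its excedance columns reconstruct $f,f'$ and the remaining columns reconstruct $g,g'$, after which the embracing-number conditions uniquely restore the descent-block structure of $\pi$, hence $w=\text{istd}_M(\pi)$. The main obstacle I anticipate is the mixed-inversion identity $I=\text{Ddif}(\pi)-\text{Dbot}(\pi)$: the embracing-number rule is engineered precisely so these counts align, but verifying it rigorously requires a simultaneous induction on the insertions of descent tops into $f'$ and non-descent-tops into $g'$ as letters are processed in increasing value order, together with a careful matching of each descent block of $\pi$ to a contiguous block of columns in the rearranged biword; once this combinatorial alignment is in place, the three equalities drop out together.
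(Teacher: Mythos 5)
First, a point of comparison: the paper does not prove this statement at all --- it is imported from Clarke--Steingr\'{\i}msson--Zeng \cite{Clarke-1997} and used as a black box, so there is no in-paper proof to measure yours against. Judged on its own, your outline follows the right general strategy for the third coordinate pair (identify the excedance columns of $\pi'$ with those of $\binom{f}{f'}$, convert the embracing-number orderings into $\mathrm{imv}(f')+\text{INV}(g')=\mathrm{Res}$, then handle the mixed inversions), but it has two genuine errors.

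The more serious one is the opening reduction. You claim $(\text{des},\text{MAK},\text{MAD})w=(\text{des},\text{MAK},\text{MAD})\,\mathrm{std}(w)$ and $(\text{exc},\text{DEN},\text{INV})\Phi(w)=(\text{exc},\text{DEN},\text{INV})\pi'$; both fail in the third coordinate. Since $\mathrm{Dtop}$ is a sum of \emph{heights} while $\mathrm{Dbot}$ is a sum of \emph{values}, $\text{MAD}$ does not commute with standardization, and $\mathrm{istd}_M$ destroys inversions between letters that collapse to a common value. The paper's own example $w=13213223$ refutes both claims: $\text{MAD}(w)=6+1=7$ but $\text{MAD}(\mathrm{std}(w))=7+1=8$, while $\text{INV}(\pi')=\text{INV}(13762458)=8$ but $\text{INV}(\Phi(w))=\text{INV}(12331223)=7$. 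The two defects cancel --- which is why the theorem is true --- but your argument never establishes this, so the permutation identity you prove is not the one the word statement needs; you would have to quantify both discrepancies and show they agree, or argue on words throughout. (The $\text{des}$ and $\text{MAK}$ coordinates do commute with $\mathrm{std}$ and $\mathrm{istd}_M$, so the reduction is sound for those.) The second error is in the last coordinate even at the permutation level: with $\text{INV}(\pi')=\mathrm{imv}(f')+\text{INV}(g')+I$ and $\mathrm{imv}(f')+\text{INV}(g')=\mathrm{Res}(\pi)$, matching $\text{MAD}(\pi)=\mathrm{Ddif}(\pi)+\mathrm{Res}(\pi)$ requires $I=\mathrm{Ddif}(\pi)$, not $I=\mathrm{Ddif}(\pi)-\mathrm{Dbot}(\pi)$ as you wrote; in the example above $I=7=\mathrm{Ddif}(\pi)$ whereas $\mathrm{Ddif}(\pi)-\mathrm{Dbot}(\pi)=-2$. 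Finally, the columnwise inequalities $f_k<f'_k$ and $g_k\geq g'_k$ and the existence of words $f',g'$ realizing the prescribed embracing numbers are nontrivial lemmas of \cite{Clarke-1997} that you pass off as ``by construction''; they too need proof.
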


The following proposition shows that $\Phi$ preserves the increasing tail permutation,
it implies that the triple statistics (des, {\footnotesize MAK}, {\footnotesize MAD}) and (exc, {\footnotesize DEN}, {\footnotesize INV}) are equidistributed on $\mathcal{P}_{M}$.
\begin{proposition}\label{Prop-Clarke-closed-on-M}
Let $M=\{1^{k_{1}},2^{k_{2}},\ldots,m^{k_{m}}\}$ with $k_{i}\geq1$ for all $i\in[m]$,
the set $\mathcal{P}_{M}$ is invariant under $\Phi$, that is,
$$\Phi(\mathcal{P}_{M})=\mathcal{P}_{M}.$$
\end{proposition}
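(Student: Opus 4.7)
Since $\Phi:\mathfrak{S}_M\to\mathfrak{S}_M$ is a bijection, it suffices to prove $w\in\mathcal{P}_M\Rightarrow\Phi(w)\in\mathcal{P}_M$. Let $\pi=\mathrm{std}(w)$ and set $a_j=k_1+\cdots+k_j$. Because $\mathrm{std}$ assigns the largest std-value among copies of a letter to its rightmost copy, the std-value of the last occurrence of letter $j$ in $w$ is precisely $a_j$. Consequently, writing $\pi'$ for the bottom row of the sorted biword in the construction of $\Phi$, we have $\Phi(w)=\mathrm{istd}_M(\pi')\in\mathcal{P}_M$ if and only if the values $a_1,\ldots,a_m$ occur in $\pi'$ at strictly increasing positions, with $a_m$ at position $n$. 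The plan is to track each $a_j$ through the biword construction.

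\textbf{Step 1: descent-top status and embracing number of $a_j$ in $\pi$.} Let $s_j$ be the position with $\pi_{s_j}=a_j$; by hypothesis $s_1<s_2<\cdots<s_m=n$. The crucial observation is that for every $i>s_j$, the letter $w_i$ has its last occurrence at some $s_k>s_j$, forcing $k>j$, hence $\pi_i\ge a_{k-1}+1>a_j$. Two consequences follow: (a) $\pi_{s_j+1}>\pi_{s_j}$ when $j<m$, so $a_j$ is never a descent top of $\pi$, which places every $a_j$ inside the non-descent-tops block $g'$; (b) every letter inside any descent block strictly to the right of position $s_j$ exceeds $a_j$, so in particular the opener of such a block exceeds $a_j$ and cannot embrace $a_j$. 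Hence the embracing number of $a_j$ in $\pi$ equals $0$ for each $j$.

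\textbf{Step 2: positions of $a_1,\ldots,a_m$ inside $g'$ are increasing.} The ordering rule defining $g'$ requires each $x\in g'$ to have exactly $\mathrm{embrace}(x)$ smaller elements to its right within $g'$. Since $\mathrm{embrace}(a_j)=0$ and every $a_j$ lies in $g'$, no $a_i$ with $i<j$ (hence $a_i<a_j$) can appear to the right of $a_j$ in $g'$. Therefore $a_1,a_2,\ldots,a_m$ occupy positions $p_1<p_2<\cdots<p_m$ in $g'$. Moreover $a_m=n$ is the maximum of $g'$ with embracing $0$, so it must sit at the rightmost slot, giving $p_m=|g'|$.

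\textbf{Step 3: reading off $\pi'$ and conclusion.} After sorting the concatenated biword $\binom{f\,|\,g}{f'\,|\,g'}$ by its top row, the value $a_j$---being the $p_j$-th entry of $g'$---ends up in $\pi'$ at the position given by its columnwise top partner $g_{p_j}$. Since $g$ lists non-descent bottoms in strictly increasing order and $p_1<\cdots<p_m$, we obtain $g_{p_1}<g_{p_2}<\cdots<g_{p_m}$. Finally, $n=a_m$ is trivially not a descent bottom, so $n\in g$; as the maximum of $g$ it sits at the last slot, giving $g_{p_m}=g_{|g|}=n$. Thus $\pi'$ has the values $a_1,\ldots,a_m$ at strictly increasing positions $g_{p_1}<\cdots<g_{p_m}=n$, which is exactly the condition $\Phi(w)=\mathrm{istd}_M(\pi')\in\mathcal{P}_M$. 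The main obstacle is Step 1: extracting that every entry of $\pi$ to the right of $s_j$ exceeds $a_j$ and then translating this into a vanishing embracing number. Once this is secured, the ordering rule for $g'$ together with the monotonicity of $g$ makes the rest of the argument automatic.
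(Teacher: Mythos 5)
Your Step 1 matches the paper's key observation (every letter of $\pi$ to the right of the last occurrence of a letter $s$ of $w$ exceeds $a_s$, hence $a_s$ is a non-descent top of $\pi$ with embracing number $0$), and the mechanism in Step 3 for transporting the left-to-right order of $g'$ into $\pi'$ via the increasing word $g$ is sound. The problem is the reduction announced in your Strategy paragraph: it is \emph{not} true that $\Phi(w)=\mathrm{istd}_M(\pi')\in\mathcal{P}_M$ if and only if $a_1,\dots,a_m$ occur at increasing positions in $\pi'$ with $a_m$ last. The letter $j$ of $\mathrm{istd}_M(\pi')$ is produced by \emph{all} of the values $a_{j-1}+1,\dots,a_j$, and its last occurrence sits at the position of whichever of these values is rightmost in $\pi'$ --- not automatically at the position of $a_j$ (note that $\mathrm{std}(\mathrm{istd}_M(\pi'))$ need not equal $\pi'$). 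For instance, with $k_1=2$, $k_2=k_3=1$, the permutation $\pi'=2314$ has $a_1=2$, $a_2=3$, $a_3=4$ at increasing positions with $a_3$ last, yet $\mathrm{istd}_M(2314)=1213\notin\mathcal{P}_M$, because the value $1$ (a non-last copy of the letter $1$) lies to the right of $a_2=3$. So Steps 2--3, which only compare the distinguished values $a_1,\dots,a_m$ with one another, do not suffice to conclude.

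What is missing is exactly the content of the claim the paper proves: for \emph{every} $b<a_s$, not just $b\in\{a_1,\dots,a_{s-1}\}$, the value $b$ lies to the left of $a_s$ in $\pi'$ (this pins the last occurrence of each letter $s$ of $\Phi(w)$ at the position of $a_s$, after which monotonicity is immediate). Your embracing-number argument handles those $b$ that are non-descent tops and hence land in $g'$, just as in the paper. But a value $b<a_s$ may instead be a descent top of $\pi$ and land in $f'$; your proof never considers this case. The paper disposes of it by observing that the letters of $f'$ become the excedance tops of $\pi'$, so $\pi'_j=b$ forces $j<b$, while $a_s\in g'$ is not an excedance top, so $\pi'_k=a_s$ forces $k\ge a_s$; hence $j<b<a_s\le k$ and $b$ is to the left of $a_s$. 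Adding this case, and upgrading your claim from the $a_j$'s to all values below $a_s$, closes the gap and essentially reproduces the paper's proof.
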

\begin{proof}
Let $w=w_{1}w_{2}\ldots w_{n}\in\mathcal{P}_{M}$.
For any given $s\in\{2,3,\ldots,m\}$,
assume that $w_{p}=s$ is the last occurrence  of the letter $s$ in $w$.
Let $c=\sum_{j=1}^{s}k_{j}$.
Let $\pi=\pi_{1}\pi_{2}\ldots\pi_{n}=\text{std}(w)$,
then $\pi_{p}=\sum_{j=1}^{s}k_{j}=c$.
We claim that in permutation $\pi^{\prime}$, for any $b$ with $b<c$,
the letter $b$ is on the left of the letter $c$.
Note that our claim implies the proposition.
Below we prove our claim.
Because the tail permutation of $w$ is $12\ldots m$ and
$w_{p}=s$ is the last occurrence of the letter $s$ in $w$,
we have $w_{p}<w_{j}$ for $j>p$.
Then $c=\pi_{p}<\pi_{j}$ for $j>p$.
It follows that in permutation $\pi$  the embracing number of the letter $c$ is $0$
and that $c$ is a non-descent top.
By the definition of $\pi^{\prime}$,
we see that $c\in g^{\prime}$,
and there is no  letter in $g^{\prime}$ that is smaller than $c$ and that is on the right of $c$.
If $b\in g^{\prime}$, it must be on the left of $c$ in $g^{\prime}$ since $b<c$.
In this case we see that $b$ is on the left of $c$ in $\pi^{\prime}$.
If $b\in f^{\prime}$, then it is an excedance top in $\pi^{\prime}$.
Assume that $\pi_{j}^{\prime}=b$,
then $j<b$.
Assume that $\pi_{k}^{\prime}=c$,
since $c\in g^{\prime}$ is not an excedance top,
we have $c\leq k$.
Therefore, $j<b<c\leq k$.
So in this case we also have that $b=\pi_{j}^{\prime}$ is on the left of $c=\pi_{k}^{\prime}$ in  $\pi^{\prime}$ as $j<k$.
Then our claim is true and we complete the proof.
\end{proof}

\noindent{\bf Acknowledgements.}
This work was supported by the National Natural Science Foundation of China (12101134).
The author appreciates the careful review, corrections and helpful suggestions to this paper made by the referees.

\phantomsection


\begin{thebibliography}{99}

\addcontentsline{toc}{section}{References}

\bibitem{Assaf-2008} S.H. Assaf,
A generalized major index statistic,
S\'{e}m. Lothar. Combin.
60 (2008) B60c.

\bibitem{Bjorner-2000} E. Babson, E. Steingr\'{\i}msson,
Generalized permutation patterns and a classification of the Mahonian
statistics,
S\'{e}m. Lothar. Combin.  18 (2000) B44b.

\bibitem{Bjorner-1991} A. Bj\"{o}rner, M.L. Wachs,
Permutation statistics and linear extensions of posets,
J. Combin. Theory Ser. A 58 (1991) 85-114.

\bibitem{Carnevale-2017}A. Carnevale,
On some Euler-Mahonian distributions, Electron. J. Comb. 24  (2017) 3-27.

\bibitem{Carlitz-1933} L. Carlitz,
On Abelian fields,
Trans. Amer. Math. Soc. 35 (1933) 122-136.

\bibitem{Carlitz-1948} L. Carlitz,
$q$-Bemoulli numbers and polynomials,
Duke Math. J.  15 (1948)  987-1000.

\bibitem{Carlitz-1975} L. Carlitz,
A combinatorial property of q-Eulerian numbers, Amer. Math. Mon. 82 (1975) 51-54.

\bibitem{Chen-2011} W.Y.C. Chen, N.J.Y. Fan, T.X.S. Li,
Han's bijection via permutation codes,
European J. Combin. 32 (2011) 217-225.

\bibitem{Chen-2008} W.Y.C. Chen, I. Gessel, C.H. Yan, A.L.B. Yang,
A major index for matchings and set partitions,
J. Combin. Theory Ser. A 115 (2008) 1069-1076.

\bibitem{Chen-2010} W.Y.C. Chen, S. Poznanovi$\acute{\text{c}}$, C.H. Yan and A.L.B. Yang,
Major index for 01-fillings of moon polyominoes,
J. Combin. Theory Ser. A 117 (2010) 1058-1081.

\bibitem{Clarke-1997} R.J. Clarke, E. Steingr\'{\i}msson, J. Zeng,
New Euler-Mahonian statistics on permutations and words,
Adv. in Appl. Math. 18 (1997) 237-270.

\bibitem{Denert-1990} M. Denert,
The genus zeta function of hereditary orders in central simple algebras over
global fields,
Math. Comp. 54 (1990) 449-465.

\bibitem{Deodhar-2003}  R.S. Deodhar, M.K. Srinivasan,
An inversion number statistic on set partitions,
Electron. Notes Discrete Math. 15 (2003) 84-86.

\bibitem{Foata-1968} D. Foata,
On the Netto inversion number of a sequence,
Proc. Amer. Math. Soc. 19 (1968) 236-240.

\bibitem{Foata-1990} D. Foata, D. Zeilberger,
Denert's permutation statistic is indeed Euler-Mahonian,
Stud. Appl. Math. 83 (1990) 31-59.

\bibitem{Gould-1961} H.W. Gould,
The $q$-Stirling numbers of the first and second kinds,
Duke Math. J.  28 (1961)  281-289.

\bibitem{Greene-1988} J. Greene,
Bijections related to statistics on words,
Discrete Math. 68 (1988) 15-29.

\bibitem{Haglund-2006} J. Haglund and L. Stevens,
An extension of the Foata map to standard Young tableaux,
S\'{e}m. Lothar. Combin. 56 (2006) B56c.

\bibitem{Han-1990-1}G.-N. Han,
Distribution Euler-mahonienne: une correspondance, C. R. Acad. Sci. Paris, S\'{e}rie I 310 (1990) 311-314.

\bibitem{Han-1990-2}G.-N. Han,
Une nouvelle bijection pour la statistique de Denert,
C. R. Acad. Sci. Paris, S\'{e}rie I 310 (1990) 493-496.

\bibitem{Han-1992} G.-N. Han,
Une courte d\'{e}monstration d'un r\'{e}sultat sur la Z-statistique,
C. R. Acad. Sci. Paris, S\'{e}rie I 314 (1992) 969-971.

\bibitem{Han-1994} G.-N. Han,
Une transformation fondamentale sur les r\'{e}arrangements de mots,
Adv. Math. 105 (1994) 26-41.

\bibitem{Johnson-1996-1} W.P. Johnson,
A $q$-analog of Fa\'{a} di Bruno's Formula,
J. Combin. Theory Ser. A 76 (1996) 305-314.

\bibitem{Johnson-1996-2}W.P. Johnson,
Some applications of the $q$-exponential formula,
Discrete Math. 157 (1996)  207-225.

\bibitem{Kadell-1985}
K. Kadell,
Weighted inversion numbers, restricted growth functions, and standard Young tableaux,
J. Combin. Theory Ser. A 40 (1985) 22-44.

\bibitem{Kitaev-2016} S. Kitaev, V. Vajnovszki,
Mahonian STAT on words,
Information Processing Letters 116 (2016) 157-162.

\bibitem{Liang-1989}
K. Liang,
Bijections for Mahonian statistics on permutations and labeled forests,
Ph.D. Thesis, University of Miami, 1989.

\bibitem{Liu-2021} S.-H. Liu,
MacMahon's equidistribution theorem for $k$-Stirling permutations,
Adv. Appl. Math.  128 (2021) 102193.

\bibitem{MacMahon-1916}  P. MacMahon,
Two applications of general theorems in combinatory analysis.
Proc. London Math. Soc. 15 (1916) 314-321.

\bibitem{Mansour-2013} T. Mansour,
Combinatorics of Set Partitions,
CRC Press, Boca Raton, 2013.

\bibitem{Milne-1982} S.C. Milne,
Restricted growth functions, rank row matchings of partition lattices, and $q$-Stirling
numbers,
Adv. Math.  43 (1982)  173-196.

\bibitem{Rawlings-1981} D. Rawlings,
The $r$-major index,
J. Combin. Theory Ser. A 31  (1981) 175-183.

\bibitem{Rawlings-1981-2} D. Rawlings,
The $(q,r)$-Simon Newcomb problem,
Linear Multilinear Algebra 10 (1981) 253-260.

\bibitem{Remmel-2015}  J. Remmel, A. T. Wilson,
An extension of MacMahon's Equidistribution Theorem to ordered set partitions,
J. Combin. Theory Ser. A 134 (2015) 242-277.

\bibitem{Rodriguez-1839} O. Rodriguez,
Note sur les inversion, ou d\'{e}rangements produits dans les permutations,
J. de Math. 4 (1839) 236-240.


\bibitem{Sagan-1991} B. Sagan,
A maj statistic for set partitions,
European J. Combin. 12 (1991) 69-79.

\bibitem{Skandera-2001} M. Skandera,
An Eulerian partner for inversions,
S\'{e}m. Lothar. Combin. 46 (2001) B46d.

\bibitem{Wilson-2016} A.T. Wilson,
An extension of MacMahon's equidistribution theorem to ordered multiset partitions,
Electron. J. Combin. 23  (2016) 1-5.

\bibitem{Zeilberger-1985}  D. Zeilberger, D.M. Bressoud,
A proof of Andrews $q$-Dyson conjecture,
Discrete Math. 54 (1985) 201-224.

\end{thebibliography}
\end{document}